\newtheorem{theo}{Th\'eor\'eme}[section]
\def\C{\mathbb{C}}
\def\N{\mathbb{N}}
\def\P{\mathbb{P}}
\def\Z{\mathbb{Z}}
\def\a{\mathbf{a}}
\def\l{\ell}
\def\e{\mathbf{e}}
\def\c{\mathbf{c}}
\def\b{\mathbf{b}}
\def\d{\mathbf{d}}
\def\line{\overline}
\def\Id{\mathop{\mathrm{Id}}\nolimits}
\def\Hom{\mathop{\mathrm{Hom}}\nolimits}
\def\ker{\mathop{\mathrm{ker}}\nolimits}
\def\Im{\mathop{\mathrm{Im}}\nolimits}
\def\dim{\mathop{\mathrm{dim}}\nolimits}
\def\sym{\mathop{\mathrm{sym}}\nolimits}
\def\GL{\mathop{\mathrm{GL}}\nolimits}
\def\remk{\noindent\textit{Remark:~}}
\newtheorem{prop}[theo]{Proposition}
\newtheorem{prop-def}[theo]{Proposition-Definition}
\newtheorem{def-prop}[theo]{Definition-Proposition}
\newtheorem{lemma}[theo]{Lemma}
\newtheorem{teo}[theo]{Theorem}
\newtheorem{definition}[theo]{Definition}
\newtheorem{notation}[theo]{Notation}
\newtheorem{example}[theo]{Example}
\title[Consequences]{Some geometric consequences of the symmetric reduction}
\author{Taiwang DENG}
\address{ 
Beijing Institute of Mathematical Sciences and Applications (BIMSA), Traffic Light Gate, No. 544 Hefangkou Village, Huairou District, Beijing}
\email{dengtaiw@bimsa.cn}
\date{}
\begin{document}

\begin{abstract}
In this article, we derived some consequences to the symmetrization process developed in \cite{Deng23}.
This consists a geometric derivation of  part of the properties which uniquely determines the Kazhdan-Lusztig polynomials of type $A_n$ as
well as an interpretations of the last property by the decomposition theorem of \cite{BBD}. Finally, the relation of geometry of graded nilpotent classes to Parabolic Kazhdan-Lusztig 
polynomials is discussed for the purpose of later applications to Zelevinsky's derivatives.
\end{abstract}

\keywords{Parabolic Kazhdan-Lusztig polynomials, Parabolic induction,  Decomposition theorem, Symmetric reduction, Zelevinsky classification}
\maketitle
\tableofcontents

\section{Introduction}
This is a continuation of our previous work \cite{Deng23}, which again was part of my thesis\cite{Deng16}.
The goal of the present paper is twofold. Firstly we aim to draw  some consequence from our previous paper  \cite{Deng23}.
In loc.cit., we have developed a symmetrization process to study the geometry of graded nilpotent classes.
In terms of the infinite dimensional representation theory of $\GL_n$ over a non-arhimedean field, this is related to the study
of certain partial derivatives introduced in loc.cit. which generalizes  Zelevinsky's derivatives.

In the first part of the paper, we consider the $q$-polynomial
\[
 P_{x,y}(q): =q^{\frac{1}{2}(\dim(O_{\Phi(y)})-\dim(O_{\Phi(x)}))}\sum_{i}q^{\frac{1}{2}i}\mathcal{H}^{i}(\line{O}_{\Phi(y)})_{\Phi(x)}.
\]
See (\ref{eqn-KZ-geo}) for reference to the relevant notations. The main results in this section are summarized in 
Theorem \ref{prop-geo-KZ}. Perhaps most importantly, in Proposition \ref{prop: 5.3.12} we obtain explicit irreducible 
summands appearing in a special application of the decomposition theorem of \cite{BBD} by comparing with the combinatorial 
properties of $ P_{x,y}(q)$. This will be crucial in our geometric determination of the Zelevinsky's derivatives in \cite{Deng24a}.

In the second part of the paper, we generalize the study of the symmetric case in \cite[\S 4]{Deng23} to the parabolic cases. In the regular cases, the main results are given in Proposition \ref{prop: 6.2.10} and Theorem \ref{teo-main-regular}, which give an explicit parametrization of the poset $S(\a)$ with $\a$ regular by coset representatives in the symmetric group determined by parabolic subgroups of $\GL_n$. The more general case of parametrization for the poset $S(\a)$ with $\a$ non-regular by double coset representatives in the symmetric group is given in Theorem \ref{prop: 6.3.6}.

We will apply the results of this paper to calculate Zelevinsky's derivatives in \cite{Deng24a}.

Finally, the reader may wonder why the author put such an effort into making explicit the relations between the geometry of Schubert varieties and the geometry of graded nilpotent classes. It turns out that there are deeper reasons lying behind the scenes. In fact, it follows from our discussions that a certain category of equivariant perverse sheaves embeds fully faithfully into a certain category of equivariant perverse sheaves on the space of graded nilpotent classes. This gives the dual version of the main results of \cite{Chan23}. A generalization of \cite{Chan23} in our sense to the other groups will be pursued in a forthcoming work.

\par \vskip 1pc
{\bf Acknowledgements}
This paper is part of my thesis at University Paris 13, which is funded by the program DIM of the
region Ile de France. I would like to thank my advisor Pascal Boyer for his keen interest in this work and 
his continuing support and countless advice.
It was rewritten during my stay as a postdoc at Max Planck Institute for Mathematics and Yau Mathematical Sciences Center, I thank their hospitality. In addition, 
I would like to thank Alberto M\'inguez ,Vincent S\'echerre, Yichao Tian and Bin Xu for their helpful discussions on the subject.

\section{Geometric Proof of KL Relations}

For $n\geq 1$, recall that the permutation group $S_n$ of $\{1, \cdots, n\}$
and that $S=\{\sigma_i=(i, i+1): i=1, \cdots, n-1\}$ is a set of generators. On $S_n$ there is a partial order "$<$" called the Bruhat order (cf. \cite[Chapter 2]{BF}).

We will keep the same notations as in \cite[\S 2]{Deng23}. 
Recall that in \cite[Definition 2.12]{Deng23} we have introduced a partial order $\leq$ on the set of multisegments and the finite set
\[
S(\a)=\{\b: \b\leq \a\}
\]
following \cite[7.1]{Z2}. 
Following \cite[Proposition 4.7]{Deng23}, 
 we can find a multisegment $\a_{\Id}$ such that we have 
an isomorphism
\begin{equation}\label{eqn-parametrization-sym}
 \Phi: S_n\rightarrow S(\a_{\Id}).
\end{equation}
Recall that according to \cite[Definition 3.1]{Deng23}, for each multisegment we can associate a weight function 
\[
\varphi_{\a}: \Z\rightarrow \N.
\]
Furthermore, by \cite[Definition 3.2]{Deng23} we can attach a vector space $E_{\varphi}$ to a weight function $\varphi$ together with
an action by an algebraic group $G_{\varphi}$. By \cite[Proposition 3.3]{Deng23} the $G_{\varphi}$-orbits on $E_{\varphi}$ are indexed by the set $S(\varphi)$ of multisegments with weight $\varphi$. Now for $x, y\in S_{n}$, we introduce the polynomial in $q$ (cf. \cite[Definition 3.5]{Deng23}):
\begin{equation}\label{eqn-KZ-geo}
 P_{x,y}(q): =q^{\frac{1}{2}(\dim(O_{\Phi(y)})-\dim(O_{\Phi(x)}))}\sum_{i}q^{\frac{1}{2}i}\mathcal{H}^{i}(\line{O}_{\Phi(y)})_{\Phi(x)}.
\end{equation}

The goal in this section is:

\begin{teo}\label{prop-geo-KZ}
The following relations hods for $P_{x, y}(q)$:
 \begin{description}
  \item[(1)] $P_{x,x}=1$ for all $x\in S_{n}$;
  \item[(2)]if $x<y$ and $s\in S$,  are such that $sy<y$, $sx>x$,
  then $P_{x,y}=P_{sx, y}$;
   \item[(3)]if $x<y$ and $s\in S$, are such that $ys<y$, $xs>x$,
  then $P_{x,y}=P_{xs, y}$;
   \item[(4)]if $x<y$ and $s\in S$, are such that $sy<y$, $sx<x$,
   and $x$ is not comparable to $sy$, 
  then $P_{x,y}=P_{sx, sy}$;
  \end{description}
  Moreover, a geometric interpretation of the following relation is given in Proposition \ref{prop: 5.3.12}: 
  \begin{description}
   \item[(5)]if $x<y$ and $s\in S$, are such that $sy<y$, $sx<x$, and $x<sy$,
  then
  \[
   P_{x,y}=P_{sx, sy}+qP_{x, sy}-\sum_{x\leq z <sy,sz<z}q^{1/2(\ell(y)-\ell(z))}\mu(z, sy)P_{x,z},
  \]
here $\mu(z, sy)$ is the coefficient of degree $1/2(\ell(sy)-\ell(z)-1)$ in $P_{z, sy}$ defined to be zero if 
$\ell(sy)-\ell(z)$ is even).
 \end{description}
\end{teo}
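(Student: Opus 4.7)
The plan is to exploit the parametrization $\Phi: S_n \to S(\a_{\Id})$ together with the geometry of the graded nilpotent orbit closures $\line{O}_{\Phi(y)} \subset E_{\varphi}$ to prove geometric analogues of the standard Kazhdan--Lusztig recursion relations. Claim (1) is immediate: at the generic point $\Phi(x)$ of its own orbit, the sheaf $\line{O}_{\Phi(x)}$ (viewed through its IC complex) is locally constant in degree zero, so only the $i=0$ term contributes in (\ref{eqn-KZ-geo}) and the dimension-shift exponent vanishes, giving $P_{x,x} = 1$.

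For (2) and (3), which are exchanged by the natural involution on multisegments coming from transposition on $S_n$, I would produce a local product structure on $\line{O}_{\Phi(y)}$ near $\Phi(x)$. Since $sy < y$ but $sx > x$, the move $s$ corresponds, via $\Phi$ and the combinatorics of \cite[Proposition~4.7]{Deng23}, to an elementary modification of the multisegments that factors out a free $\mathbb{A}^1$-direction transverse to the orbit. This identifies $\mathcal{H}^i(\line{O}_{\Phi(y)})_{\Phi(x)}$ with $\mathcal{H}^i(\line{O}_{\Phi(y)})_{\Phi(sx)}$ after accounting for $\dim O_{\Phi(sx)} - \dim O_{\Phi(x)} = 1$, and the prefactor of $q$ in (\ref{eqn-KZ-geo}) absorbs this shift precisely. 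For (4), the extra hypothesis that $x$ is incomparable with $sy$ forces $\Phi(x) \notin \line{O}_{\Phi(sy)}$, so in a neighbourhood of $\Phi(x)$ the closure $\line{O}_{\Phi(y)}$ is cut out by the single new orbit-component meeting $\Phi(x)$; identifying this component with the germ of $\line{O}_{\Phi(sy)}$ at $\Phi(sx)$ via the $s$-translation transports stalks and yields $P_{x,y} = P_{sx,sy}$.

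The main obstacle is (5). Here $x < sy$ means $\Phi(x) \in \line{O}_{\Phi(sy)}$, so no single reduction is available. The strategy is to build a proper surjective map $\pi: \widetilde{X} \to \line{O}_{\Phi(y)}$ adapted to $s$ (for instance via a partial resolution coming from the parabolic symmetric reduction of \cite{Deng23}) and apply the decomposition theorem of \cite{BBD} to $R\pi_* \mathbb{Q}_{\widetilde{X}}$. The direct image splits as a sum of shifted IC sheaves $\mathrm{IC}(\line{O}_{\Phi(z)})$ for $z \leq sy$; taking stalks at $\Phi(x)$ and reorganising the contributions produces the three terms on the right of (5), with $P_{sx,sy}$ coming from the generic stratum, $qP_{x,sy}$ from the distinguished special stratum, and the corrections $\mu(z,sy)P_{x,z}$ recording the multiplicities of the remaining direct summands. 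The delicate point is to show that these geometric multiplicities coincide with the combinatorially defined $\mu$-coefficients (the top-degree terms of $P_{z,sy}$); this is precisely the content of Proposition \ref{prop: 5.3.12}, which will also provide the explicit list of irreducible summands and thereby close the argument.
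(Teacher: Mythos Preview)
Your outline for (1) is fine, and your high-level picture for (5) --- apply the decomposition theorem to a suitable proper map and read off (5) at stalks --- is close to what the paper does. But there are two genuine gaps.

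\medskip
\textbf{Relations (2)--(4): the ``$s$-translation'' is not available.} The argument you sketch is the standard one for Schubert varieties in $G/B$, where the minimal parabolic $P_s$ gives an honest $\P^1$-fibration and one can literally translate by $s$. In the graded nilpotent orbit setting there is no such group action implementing $s$ on $E_{\varphi}$, so the phrases ``factor out a free $\mathbb{A}^1$-direction'' and ``identify the germ of $\line{O}_{\Phi(sy)}$ at $\Phi(sx)$ via $s$-translation'' do not refer to anything concrete. The paper proceeds differently: writing $s=\sigma_{k_1-1}$ and $k=n+k_1-1$, one checks that $\Phi(w)^{(k)}=\Phi(sw)^{(k)}$ (the two multisegments agree after the $k$-truncation of \cite[Definition~5.2]{Deng23}), and then \cite[Corollary~5.38]{Deng23} gives $P_{w,v}=P_{sw,v}$ immediately --- this is (2). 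For (4) the paper again passes through the $(k)$-truncation: using the fibration $\phi_W:(X_{\a}^k)_W\to (Z^{k,\a})_W$ from \cite[\S5.3]{Deng23} and a fiber analysis (Proposition~\ref{prop: 4.4.5}), one shows that over the interval $[\b',\c']$ (with $\b'=\Phi(sx)$, $\c'=\Phi(sy)$) the restricted $\phi_W$ is a $\C^\times\times\C^{n-k}$-fibration onto $Z^k_{\b^{(k)},\c^{(k)}}$, whence $P_{sx,sy}=P_{\b^{(k)},\c^{(k)}}=P_{x,y}$. The incomparability hypothesis is used, via the lifting property of Bruhat order, to ensure that every $\d'$ in the interval $[\b',\c']$ lies outside $S(\a)_k$, which is exactly what makes the fibers constant. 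None of this is a ``local product at $\Phi(x)$''; it is a comparison of both sides with a common third object $P_{\b^{(k)},\c^{(k)}}$.

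\medskip
\textbf{Relation (5): the logical direction is reversed.} You describe proving (5) by showing that the geometric multiplicities in the decomposition of $R\pi_*$ agree with the $\mu$-coefficients, and say Proposition~\ref{prop: 5.3.12} does this. In fact the paper does \emph{not} give a geometric proof of (5). It constructs a specific $\P^1$-bundle $\kappa_W:\mathcal{Z}^k_W\to (Z^{k,\a})_W$ (the projectivization of a rank-$2$ bundle built from $\ker(T|_{V_{\varphi,k-1}})$), applies the decomposition theorem to $\kappa_W$ restricted over the relevant interval, and then \emph{inputs} the already-known relation (5) from \cite{KL79} to read off the triples $(\d_i,L_i,h_i)$. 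Proposition~\ref{prop: 5.3.12} is thus the output of comparing with (5), not a step in proving it; the theorem only claims a ``geometric interpretation'' of (5), not a geometric derivation. Your write-up should make this dependence explicit rather than presenting (5) as being established by the decomposition argument.
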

Our proof of the relations (1)-(4) are geometric and relies on the geometry study of \cite[\S 5.3]{Deng23}.
We give an interpretation geometric for (5)
which will be used in our work on the geometric study of Zelevinsky's derivatives \cite{Deng24a}.
Note that it follows from \cite{KL79} that the above properties (1)-(5) characterize
a unique family of polynomials $P_{x,y}(q)$ of $\Z[q]$ for $x, y\in S_n$ which are the Kazhdan-Lusztig polynomials.

\subsection{Relation (2) and (3)}
Assume that $k, k_1\in \N$ such that $1<k_1\leq n, k=n+k_1-1$.
Since the relation (2) and (3) are symmetric to each other, 
we only prove (2). By \cite{BF} (1.26), 
the conditions 
\[
 \sigma_{k_1-1}w>w, ~\sigma_{k_1-1}v<v.
\]
are equivalent to 
\[
 w^{-1}(k_1-1)<w^{-1}(k_1), \quad v^{-1}(k_1-1)>v^{-1}(k_1).
\]

\begin{prop}
Let $\a=\Phi(w), \c=\Phi(v)\in S(\a)$, such that 
\[
 w^{-1}(k_1-1)<w^{-1}(k_1), \quad v^{-1}(k_1-1)>v^{-1}(k_1),
\]
then 
\[
 P_{w, v}(q)=P_{\sigma_{k_1-1}w, v}(q).
\]
\end{prop}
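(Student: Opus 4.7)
The plan is to proceed in direct analogy with the classical proof of relation (2) for Schubert varieties, using the dictionary between Bruhat order on $S_n$ and the closure order on graded nilpotent orbits established in \cite[\S 5.3]{Deng23}. By definition (\ref{eqn-KZ-geo}), the desired equality $P_{w,v}(q) = P_{\sigma_{k_1-1}w,v}(q)$ amounts to the assertion that the graded stalks of the intersection cohomology complex of $\line{O}_{\Phi(v)}$ at a point of $O_{\Phi(w)}$ and at a point of $O_{\Phi(\sigma_{k_1-1}w)}$ coincide up to the Tate twist dictated by $\dim O_{\Phi(\sigma_{k_1-1}w)} - \dim O_{\Phi(w)}$.

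First I would translate the combinatorial hypotheses geometrically. In the Schubert setting, the condition $\sigma_{k_1-1}v < v$ forces $\line{X_v}$ to be stable under the minimal parabolic attached to $\sigma_{k_1-1}$, so that $\line{X_v}$ is fibered in $\P^1$'s whose generic point meets $X_{\sigma_{k_1-1}w}$ and special point meets $X_w$; both points then lie in a common fiber contained in $\line{X_v}$. The analogous picture here is to construct, using the symmetric reduction toolkit of \cite{Deng23}, a $G_{\varphi}$-equivariant $\P^1$-fibration $\pi$ on an open neighborhood of a chosen point of $O_{\Phi(w)}$ in $E_{\varphi}$ such that the hypothesis $v^{-1}(k_1-1) > v^{-1}(k_1)$ makes $\line{O}_{\Phi(v)}$ saturated under $\pi$, while $w^{-1}(k_1-1) < w^{-1}(k_1)$ puts $O_{\Phi(w)}$ and $O_{\Phi(\sigma_{k_1-1}w)}$ in a common fiber of $\pi$. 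Smooth base change for intersection cohomology then transports the stalk at a point of $O_{\Phi(\sigma_{k_1-1}w)}$ to the stalk at a point of $O_{\Phi(w)}$, shifted by twice the relative dimension; this shift is exactly the one absorbed by the prefactor $q^{(\dim O_{\Phi(v)}-\dim O_{\Phi(w)})/2}$ in (\ref{eqn-KZ-geo}), and the equality of polynomials follows.

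The main obstacle will be producing the $\P^1$-fibration $\pi$ intrinsically on $E_{\varphi}$ with the required $G_{\varphi}$-equivariance and compatibility with $\line{O}_{\Phi(v)}$. Unlike the flag variety case, where the $\P^1$-fibration $G/B \to G/P_{\sigma_{k_1-1}}$ is canonical, here it must be built from the data of the multisegment $\Phi(v)$ and the pair of indices $(k_1-1, k_1)$ by the symmetric reduction construction of \cite[\S 5.3]{Deng23}; verifying that the two combinatorial hypotheses translate precisely into the saturation of $\line{O}_{\Phi(v)}$ and the fiber-collinearity of $O_{\Phi(w)}$ with $O_{\Phi(\sigma_{k_1-1}w)}$ is the crux on which the proof rests, after which smooth base change closes the argument mechanically.
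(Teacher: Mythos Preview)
Your plan is conceptually on target but it is a roadmap, not a proof: you explicitly flag the construction of the $\P^1$-fibration $\pi$ and the verification of saturation/fiber-collinearity as ``the main obstacle'' and then stop. Nothing in the proposal actually carries out these steps, so as written there is a genuine gap.

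The paper's proof is much shorter and bypasses the obstacle entirely. It does not build a new fibration; instead it performs a one-line combinatorial check and then invokes a result already proved in \cite{Deng23}. Concretely: writing $\a=\Phi(w)$, $\b=\Phi(\sigma_{k_1-1}w)$ and $k=n+k_1-1$, one computes from the explicit formula $\b=\sum_j[b(\Delta_{w^{-1}\sigma_{k_1-1}(j)}),e(\Delta_j)]$ that $\a$ and $\b$ differ only in the two segments ending at $k-1$ and $k$, and hence $\a^{(k)}=\b^{(k)}$. The equality $P_{w,v}(q)=P_{\sigma_{k_1-1}w,v}(q)$ then follows immediately from \cite[Corollary~5.38]{Deng23}.

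The geometric content you are after --- the analog of the $\P^1$-bundle $G/B\to G/P_{\sigma_{k_1-1}}$ and the smooth base change argument --- is exactly what is packaged inside \cite[Corollary~5.38]{Deng23}, via the fibrations $\alpha$, $\tau_W$, $\phi_W$ of \cite[\S 5.3]{Deng23}. So you are proposing to re-derive, in this special case, machinery that the paper already has available as a black box. If you want to turn your sketch into a proof, the shortest path is to recognize that the hypothesis $w^{-1}(k_1-1)<w^{-1}(k_1)$ forces $\Phi(w)^{(k)}=\Phi(\sigma_{k_1-1}w)^{(k)}$ and then quote that corollary; everything else you wrote is already contained in its proof.
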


\begin{proof}

Suppose that 
\[
 \Phi(\Id)=\{\Delta_1\preceq \cdots \preceq \Delta_n\}.
\]
Let $\b=\Phi(\sigma_{k_1-1}w)$, then 
\begin{align*}
 \b=&\sum_{j}[b(\Delta_j), e(\Delta_{\sigma_{k_1-1}w(j)})]\\
   =&\sum_{j}[b(\Delta_{w^{-1}\sigma_{k_1-1}(j)}), e(\Delta_j)]\\
   =&\sum_{j\neq k_1-1, k_1}[b(\Delta_{w^{-1}(j)}), e(\Delta_j)]+
   [b(\Delta_{w^{-1}(k_1-1)}), e(\Delta_{k_1})]+[b(\Delta_{w^{-1}(k_1)}), e(\Delta_{k_1-1})], 
\end{align*}
where $b(\Delta)$ (resp. $e(\Delta)$ )denotes the beginning (resp. the end) of a segment $\Delta$.
Note that 
\[
 e(\Delta_{k_1-1})=n+k_1-2=k-1, ~e(\Delta_{k_1})=n+k_1-1=k,
\]

then $\b^{(k)}=\a^{(k)}$ (cf. \cite[Definition 5.2]{Deng23} for the definition of $\a^{(k)}$).
Now applying \cite[Corollary 5.38]{Deng23}
gives the result.
\end{proof}

\subsection{Relation (4)}

Let $\a=\Phi(\Id)$ (cf.(\ref{eqn-parametrization-sym})) and $ \varphi=\varphi_{\a}$ be the corresponding weight function.
We first recall some results from \cite[\S 5.3]{Deng23}. The variety $X_{\a}^{k}$ is defined in \cite[Definition 5.20]{Deng23}, which is a suitable sub-variety of $E_{\varphi}$ admitting a fibration $\alpha$ over 
the Grassmanian $Gr(\ell_{\a,k}, V_{\varphi})$ (cf. \cite[Proposition 5.28]{Deng23}) and
\[
\ell_{\a, k}=\sharp\{\Delta\in \a: e(\Delta)=k\}, (\text{ cf. \cite[Notation 5.12]{Deng23}}).
\]
For fixed $W\in Gr(\ell_{\a,k}, V_{\varphi})$, 
let $(X_{\a}^{k})_{W}$ be the fiber of $\alpha$ over $W$ (cf. \cite[Notation 5.30]{Deng23}).
By 
\cite[Proposition 5.35]{Deng23}, 
we have an open immersion
\[
 \tau_{W}:  (X_{\a}^{k})_{W}\rightarrow 
 (Z^{k, \a})_{W}\times \Hom(V_{\varphi, k-1}, W).
\]
Here $Z^{k, \a}$ is a sub-variety of a certain fiber bundles $\tilde{Z}^k$ over the Grassmanian $Gr(\ell_{\a,k}, V_{\varphi})$ (cf. \cite[Proposition 5.24]{Deng23}) and $(Z^{k, \a})_W$ is the fiber over $W$.
See also \cite[Definition 3.2]{Deng23}
for  the definition of $V_{\varphi, k-1}$.
\begin{definition}
By composing with the canonical projection $$(Z^{k, \a})_{W}\times \Hom(V_{\varphi, k-1}, W)
\rightarrow (Z^{k, \a})_{W},$$ we have a morphism
\[
 \phi_W:  (X_{\a}^{k})_{W}\rightarrow (Z^{k, \a})_{W}.
\]
\end{definition}

Recall that for a fixed weight function $\varphi$ and $\a\in S(\varphi)$,  we denote by $O_{\a}$ the $G_{\varphi}$-orbit indexed by $\a$
on $E_{\varphi}$ (cf. \cite[Proposition 3.3]{Deng23}).
\begin{prop}\label{prop: 4.4.5}
For any $ \b=\Phi(w)\in S(\a)_{k}$ (cf. \cite[Definition 5.9]{Deng23}), we have 
 \[
  \psi_k^{-1}(\b^{(k)})=\{\b, \b'=\Phi(\sigma_{k_1-1}w)\}, 
 \]
 where $\psi_k$ is defined in \cite[Definition 5.6]{Deng23}.
Moreover, $\phi_W$ is a fibration such that we have 
\begin{description}
\item[(1)]an isomorphism $\phi_W^{-1}(T^{(k)})\simeq (\C^2-\{0\})\times \C^{2n-k-1}$ for $T^{(k)}\in O_{\b^{(k)}}$,
\item[(2)]and $\phi_W^{-1}(T^{(k)})\cap O_{\b'}\simeq \C^{\times}\times \C^{2n-k-1}$ for $T^{(k)}\in O_{\b^{(k)}}$.

\end{description}
\end{prop}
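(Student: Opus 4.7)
The plan is to prove the two assertions separately: first the combinatorial fiber identification $\psi_k^{-1}(\b^{(k)}) = \{\b, \b'\}$, and then the geometric structure of $\phi_W$ together with its interaction with the orbit $O_{\b'}$, using the open immersion $\tau_W$ of \cite[Proposition 5.35]{Deng23} as the main tool.

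For the fiber identification, I would proceed combinatorially. The computation already carried out in the proof of the preceding proposition on Relation (2) shows that the swap $w \mapsto \sigma_{k_1-1} w$ only rearranges the two segments whose right endpoints are $k-1$ and $k$. Since the $(k)$-truncation forgets exactly this data, the equality $\b^{(k)} = \b'^{(k)}$ follows automatically. For the reverse inclusion, I would enumerate the possible reattachments of segments ending at $k$ compatible with membership in $S(\a)_k$ (following \cite[Definition 5.9]{Deng23} and \cite[Notation 5.12]{Deng23}) and observe that there are exactly two such reattachments, corresponding precisely to $\b$ and $\b'$.

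For the structure of $\phi_W$, I would factor it through $\tau_W$. Because $\tau_W$ is an open immersion, composing with the projection $(Z^{k,\a})_W \times \Hom(V_{\varphi,k-1}, W) \to (Z^{k,\a})_W$ exhibits $\phi_W$ as a fibration whose fibers are open subvarieties of $\Hom(V_{\varphi,k-1}, W)$. The open locus over a given $T^{(k)} \in O_{\b^{(k)}}$ belonging to the image of $\tau_W$ should be cut out by the nonvanishing of two distinguished coordinates --- those responsible for the two possible attachments of a segment ending at $k$ --- yielding the $\C^2-\{0\}$ factor (the condition being that not both distinguished coordinates vanish). The remaining $\C^{2n-k-1}$ factor collects the free parameters of the complementary $\Hom$ directions, whose dimension count should match the total fiber dimension by a direct bookkeeping using the definition of $\varphi_\a$.

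For assertion (2), the orbit $O_{\b'}$ inside $(X_\a^k)_W$ is distinguished from $O_\b$ by the stronger open condition that \emph{both} distinguished coordinates be nonzero, so intersecting with $\phi_W^{-1}(T^{(k)})$ replaces $\C^2 - \{0\}$ by $\C^\times$ while leaving the $\C^{2n-k-1}$ factor intact. The hardest step will be this orbit-theoretic refinement: one must match the $G_\varphi$-orbit stratification of $(X_\a^k)_W$ with the coordinate stratification of the $\Hom$ space, and this requires tracking how the $G_\varphi$-action transports the two distinguished coordinates through the isomorphism $\tau_W$. Once this matching is in place, assertion (2) follows by direct inspection of the defining equations, and the fibration property of $\phi_W$ over the orbit $O_{\b^{(k)}}$ follows from the homogeneity of the construction under $G_\varphi$.
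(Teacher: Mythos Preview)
Your approach to the combinatorial identification $\psi_k^{-1}(\b^{(k)})=\{\b,\b'\}$ and to assertion~(1) is essentially what the paper does, though the paper is more explicit: the ``two distinguished coordinates'' are the components of $T_0\in\Hom(V_{\varphi,k-1},W)$ restricted to the $2$-dimensional subspace $V_0:=\ker(T^{(k)}|_{V_{\varphi,k-1}})$, and the open image of $\tau_W$ is exactly the locus $T_0|_{V_0}\neq 0$, yielding the $(\C^2-\{0\})$ factor; the $\C^{2n-k-1}$ comes from $\dim V_{\varphi,k-1}=2n-k+1$ and $\dim W=1$. The paper also gives an explicit local trivialisation over the opens $U_V=\{T:\ker(T|_{V_{\varphi,k-1}})=V\}$ rather than appealing to $G_\varphi$-homogeneity, but your version is acceptable.

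Your argument for assertion~(2), however, contains a genuine error. You claim that $O_{\b'}$ is cut out by the \emph{open} condition that both distinguished coordinates be nonzero, and that this ``replaces $\C^2-\{0\}$ by $\C^\times$''. This is wrong on two counts. First, the locus where both coordinates are nonzero is $(\C^\times)^2$, not $\C^\times$, so the claim is internally inconsistent. Second, and more importantly, $O_{\b'}\cap\phi_W^{-1}(T^{(k)})$ has dimension $2n-k$, one less than the full fibre, so it cannot be open in the fibre at all. The correct condition, as in the paper, is a \emph{vanishing} condition: inside $V_0$ there is a distinguished line
\[
V_1 \;=\; \Im\bigl((T^{(k)})^{\Delta_2}\bigr)\cap V_0,
\]
where $\Delta_1<\Delta_2$ are the two segments of $\b^{(k)}$ ending at $k-1$, and $O_{\b'}$ is characterised by $T_0|_{V_1}=0$ together with $T_0|_{V_0}\neq 0$. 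This singles out $\Hom(V_0/V_1,W)\setminus\{0\}\simeq\C^\times$. The idea you are missing is that distinguishing $O_\b$ from $O_{\b'}$ amounts to asking \emph{which} of the two segments ending at $k-1$ gets extended to $k$, and this is detected not by a further genericity condition but by whether $T_0$ annihilates the line $V_1$ coming from $\Delta_2$; introducing the flag $V_0\supset V_1$ is the step you need to add.
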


\begin{proof}
Note that we have 
\[
 \psi_k^{-1}(\b^{(k)})\subseteq S(\b^{(k)}+[k]),
\]
we observe that 
\[
 S(\b^{(k)}+[k])\cap S(\a)=S(\b'),
\]
Since $\b$ is minimal in $\psi_k^{-1}(\b^{(k)})$ 
by \cite[Proposition 5.36]{Deng23}, we have 
\[
 \psi_k^{-1}(\b^{(k)})=\{\b, \b'\}.
\]

 Then consider the restriction to the fiber over $W$:
\[
 \phi_W: (O_{\b}\cup O_{\b'})_{W}\rightarrow O_{\b^{(k)}}.
\]
Let $T\in O_{\b}\cup O_{\b'}, ~T_{0}\in \Hom(V_{\varphi, k-1}, W)$.
Define $T'\in E_{\varphi}$ by
\[
 T'|_{V_{\varphi, k-1}}=T_{0}\oplus T^{(k)}|_{V_{\varphi, k-1}},
 \]
\[
 T'|_{V_{\varphi, k}}=T^{(k)}|_{V_{\varphi, k}/W}\circ p_{W},
\]
\[
 T'|_{V_{\varphi, i}}=T^{(k)}, \text{ for } i\neq k-1, k.
\]
 The map $T^{(k)}$ is defined in \cite[Definition 5.2]{Deng23}. We know that in the case of consideration $\dim(W)=\ell_{k}=1$, and 
for $\dim(\ker(T^{(k)}|_{V_{\varphi, k-1}}))=2$. 
Now let 
\[
 \Delta_{1}<\Delta_{2}
\]
be the two segments in $\b^{(k)}$ which ends in 
$k-1$.
And we consider the following flag
\[
 V_{0}=\ker(T^{(k)}|_{V_{\varphi,k-1}})\supseteq 
 V_{1}=\Im(T^{(k)})^{\Delta_{2}}\cap \ker(T^{(k)}|_{V_{\varphi,k-1}}).
\]
And we have $\dim(V_{1})=1$. 
Then for $T'\in O_{\b}\cup O_{\b'}$, it is necessary and sufficient that 
\[
T_{0}(V_{0})\neq 0.
\]
This amounts to give a nonzero element in $\Hom(V_{0}, W)\simeq \C^2$, 
which proves that the fiber  $\phi_W^{-1}(T^{(k)})\simeq (\C^{2}-{0})\times \C^{2n-k-1}$,
where the factor $\C^{2n-k-1}$ comes from the fact that $\dim(V_{\varphi, k-1})=2n-(k-1)=2n-k+1$. 
As for $T'\in O_{\b'}$, it is necessary and sufficient that 
\[
  T_{0}(V_{1})=0, ~T_{0}(V_{0})\neq 0,
\]
which amounts to give a zero element in $\Hom(V_{0}/V_{1}, W)\simeq \C$.
Hence $\phi_W^{-1}(T^{(k)})\cap O_{\b'}\simeq\C^{\times}\times \C^{2n-k-1}$. To see that 
$\phi_W$ is a fibration,
 fix $V\subseteq V_{\varphi, k-1}$ such that 
 $\dim(V)=2$. Consider the sub-scheme of $(Z^{k, \a})_{W}$ given by 
\[
  U_{V}=\{T\in (Z^{k, \a})_{W}: \ker(T|_{V_{\varphi, k-1}})=V\}.
\]
Note that since $\dim(V_{\varphi, k-1})=\dim(V_{\varphi, k}/W)+2$, the 
fact that $$\dim(\ker(T|_{V_{\varphi,k-1}}))=2$$ implies that $U_{V}$
is actually open in $(Z^{k, \a})_{W}$. 
In this case 
\[
 \phi_W^{-1}(U_{V})=U_{V}\times (\Hom(V, W)-\{0\})\times \Hom(V_{\varphi, k-1}/V, W).
\]

\end{proof}

\begin{prop}\label{prop-geo-relation(4)}
 Let $\b=\Phi(w), \c=\Phi(v)\in S(\a)$, such that 
\[
 w^{-1}(k_1-1)>w^{-1}(k_1), \quad v^{-1}(k_1-1)>v^{-1}(k_1), \quad w<v, 
\]
and $w$ is not comparable with $\sigma_{k_1-1}v$,
then 
\[
 P_{w, v}(q)=P_{\sigma_{k_1-1}w, \sigma_{k_1-1}v}(q).
\]
\end{prop}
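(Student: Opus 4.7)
The plan is to reduce both sides of the desired equality to the \emph{same} IC stalk computation on the smaller variety $(Z^{k,\a})_W$, using the fibration $\phi_W$ of Proposition \ref{prop: 4.4.5} with $k=n+k_1-1$. Write $\b=\Phi(w)$, $\c=\Phi(v)$, $\b'=\Phi(\sigma_{k_1-1}w)$, $\c'=\Phi(\sigma_{k_1-1}v)$. The descent hypotheses place $\b$ and $\c$ in $S(\a)_k$ and identify the fibers $\psi_k^{-1}(\b^{(k)})=\{\b,\b'\}$ and $\psi_k^{-1}(\c^{(k)})=\{\c,\c'\}$ via Proposition \ref{prop: 4.4.5}. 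The non-comparability of $w$ and $\sigma_{k_1-1}v$ in the Bruhat order translates, through the parametrization \eqref{eqn-parametrization-sym}, into the non-comparability of $\b$ and $\c'$ in the multisegment order; in particular $\b\not\le\c'$.

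The first step treats the left-hand side. Pick $x\in O_\b$ and set $y=\phi_W(x)\in O_{\b^{(k)}}$. By Proposition \ref{prop: 4.4.5}(1), the restriction of $\phi_W$ to $O_\b\cup O_{\b'}\to O_{\b^{(k)}}$ is smooth near $x$ with fiber an open subset of $(\C^2-\{0\})\times\C^{2n-k-1}$. Because $\b\not\le\c'$, the orbit $O_\b$ does not meet $\line{O}_{\c'}$, so near $x$ the closure $\line{O}_\c$ contains no contribution from the ``small'' stratum and coincides with $\phi_W^{-1}(\line{O}_{\c^{(k)}})$. Standard smooth base change for IC sheaves then identifies the stalk generating functions of $\line{O}_\c$ at $\b$ and of $\line{O}_{\c^{(k)}}$ at $\b^{(k)}$ up to a shift equal to the relative fiber dimension.

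The second step is the parallel computation for the right-hand side, starting from $x'\in O_{\b'}$. Here $x'$ lies in the \emph{small} fiber of $\phi_W$, and by Proposition \ref{prop: 4.4.5}(2) the restricted map $O_{\b'}\to O_{\b^{(k)}}$ is again smooth with fiber $\C^\times\times\C^{2n-k-1}$ (one complex dimension less than in Step 1). Since $\b'\le\c'$, the local geometry of $\line{O}_{\c'}$ near $x'$ is again a smooth pullback of $\line{O}_{\c^{(k)}}$, yielding the analogous identification. A direct dimension count from Proposition \ref{prop: 4.4.5} shows that $\dim O_\c-\dim O_\b=\dim O_{\c'}-\dim O_{\b'}$, and that the shifts in Steps 1 and 2 differ by exactly the amount needed to cancel against this equality of dimension differences in the prefactor of \eqref{eqn-KZ-geo}. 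Both sides of the desired identity therefore equal the same reduced polynomial $P_{\b^{(k)},\c^{(k)}}(q)$, which closes the argument.

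The main obstacle lies in Step 2 rather than Step 1. In Step 1, $O_\b$ is the generic stratum of the fibration and the hypothesis $\b\not\le\c'$ cleanly removes the ``small'' stratum contribution to $\line{O}_\c$, making the reduction to $\line{O}_{\c^{(k)}}$ essentially automatic. In Step 2, however, $O_{\b'}$ is itself the non-generic stratum of the fibration, and one must verify that $\line{O}_{\c'}$ is transversally identified near $O_{\b'}$ with a smooth pullback of $\line{O}_{\c^{(k)}}$ despite being cut out by the vanishing condition $T_0(V_1)=0$. This will be carried out by tracing through the coordinates constructed in Proposition \ref{prop: 4.4.5}, using the open immersion $\tau_W$ to transfer the argument of Step 1 to the small stratum.
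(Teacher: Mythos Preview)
Your overall architecture is exactly the paper's: reduce both $P_{w,v}$ and $P_{\sigma_{k_1-1}w,\sigma_{k_1-1}v}$ to the common quantity $P_{\b^{(k)},\c^{(k)}}(q)$ via the fibration $\phi_W$. Step~1 is precisely the content of \cite[Corollary~5.38]{Deng23}, which the paper simply quotes; your local argument there is fine but note that the non-comparability hypothesis plays no role in that direction --- Corollary~5.38 applies to any $\b,\c\in S(\a)_k$.

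The genuine gap is in Step~2, and your closing paragraph does not address it. To conclude that $\phi_W$ restricted to $X^k_{\b',\c'}$ (equivalently, $\line{O}_{\c'}$ near $O_{\b'}$) is a fibration with constant fibre $\C^\times\times\C^{2n-k-1}$, it is not enough to examine the fibre over a single point of $O_{\b^{(k)}}$ in coordinates: you must know that \emph{every} orbit $O_{\d'}$ occurring in the interval $\c'\le\d'\le\b'$ lies in the ``small'' stratum, i.e.\ $\d'\notin S(\a)_k$. If some intermediate $\d'$ were in $S(\a)_k$, the fibre of $\phi_W$ over $O_{\d'^{(k)}}$ intersected with $X^k_{\b',\c'}$ would jump in dimension, and the smooth-pullback identification of IC sheaves would fail. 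The open immersion $\tau_W$ and the local description in Proposition~\ref{prop: 4.4.5} cannot detect this --- it is a Bruhat-order statement about the entire interval, not a local coordinate computation.

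The paper fills this gap with two short lemmas inserted in the middle of the proof. The first uses the lifting property of the Bruhat order together with the non-comparability hypothesis ($w\not<\sigma_{k_1-1}v$) to prove that any $\e'=\Phi(\beta)$ with $\c'\le\e'\le\b'$ necessarily has $\sigma_{k_1-1}\beta>\beta$, hence $\e'\notin S(\a)_k$; this simultaneously yields a bijection $\{\d:\b\ge\d\ge\c\}\to\{\d':\b'\ge\d'\ge\c'\}$. Only then does the second lemma conclude that $\phi_W:X^k_{\b',\c'}\to Z^k_{\b^{(k)},\c^{(k)}}$ is a genuine fibration with fibre $\C^\times\times\C^{n-k}$, from which $P_{\b',\c'}=P_{\b^{(k)},\c^{(k)}}$ follows. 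This is where the non-comparability assumption is actually consumed; in your outline it is used only in Step~1, where it is not needed.
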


\remk As before, our conditions are equivalent to 
\[
 \sigma_{k_1-1}w>w, ~ \sigma_{k_1-1}v>v.
\]

\begin{proof}
Note that our assumption implies that both $\b$ and $\c$ are in 
$S(\a)_{k}$. 
Let $b'=\Phi(\sigma_{k_1-1}w), \c'=\Phi(\sigma_{k_1-1}v)
$. Then $\b'>\c'$.

For $\b>\d>\c$, we must have $\d=\Phi(\alpha)$ with $\sigma_{k_1-1}\alpha<\alpha$.
In fact,  $\sigma_{k_1-1}\alpha>\alpha$ would imply $\d>\c'$ by 
lifting property of Bruhat order (cf. \cite{BF} proposition 2.2.7). 
Now that we have $\b>\d>\c'$, contradicting to our assumption that $\b$
is not comparable to $\c'$.
Let $\d'=\Phi(\sigma_{k_1-1}\alpha)$. Note that we construct in this way 
 a map between the sets 
$$\rho: \{\d: \b\geq \d\geq \c\}\rightarrow \{\d': \b'\geq \d'\geq \c'\}$$
sending $\d$ to $\d'$. We first establish two lemmas before we continue.
\begin{lemma}
The morphism $\rho$ is a bijection.  
\end{lemma}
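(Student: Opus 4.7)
The plan is to produce an explicit two-sided inverse. Via the order-reversing identification $\Phi$, the intervals $\{\d : \b \geq \d \geq \c\}$ and $\{\d' : \b' \geq \d' \geq \c'\}$ correspond to the Bruhat intervals $[w,v]$ and $[\sigma_{k_1-1}w,\sigma_{k_1-1}v]$, and $\rho$ is induced by left multiplication by $\sigma_{k_1-1}$. Since the latter is an involution, it will suffice to show this operation restricts to a well-defined map in each direction; the composition in either order is then automatically the identity.

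Two dual descent observations will drive the proof. First, every element of $[w,v]$ has $\sigma_{k_1-1}$ as a left descent: the endpoints by hypothesis, every strict interior $\alpha$ by the discussion preceding the lemma. I claim, dually, that every element of $[\sigma_{k_1-1}w,\sigma_{k_1-1}v]$ has $\sigma_{k_1-1}$ as a left ascent. The endpoints are immediate, since $\sigma_{k_1-1}(\sigma_{k_1-1}w)=w>\sigma_{k_1-1}w$ and similarly for $\sigma_{k_1-1}v$. For a strict interior $\beta$, suppose toward a contradiction that $\sigma_{k_1-1}\beta<\beta$. The max--max form of the lifting property applied to $\sigma_{k_1-1}w \leq \beta$ forces $w \leq \beta$, because $\max(\sigma_{k_1-1}w, w) = w$ while $\max(\beta, \sigma_{k_1-1}\beta) = \beta$. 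Combined with $\beta \leq \sigma_{k_1-1}v$, this yields $w \leq \sigma_{k_1-1}v$, contradicting the incomparability of $w$ and $\sigma_{k_1-1}v$.

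Granted these observations, well-definedness in both directions is a routine application of lifting. For $\alpha \in [w,v]$, the min--min form applied along $w \leq \alpha \leq v$ — all three elements having $\sigma_{k_1-1}$ as a left descent — yields $\sigma_{k_1-1}w \leq \sigma_{k_1-1}\alpha \leq \sigma_{k_1-1}v$. Symmetrically, for $\beta$ in the target interval, the max--max form applied to the analogous chain, with $\sigma_{k_1-1}$ now a left ascent throughout, gives $w \leq \sigma_{k_1-1}\beta \leq v$. Combined with $\sigma_{k_1-1}^{2} = \Id$, the two maps are mutually inverse, proving the bijection.

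The main obstacle is the dual descent observation for the image interval, which is the sole place where the incomparability hypothesis enters non-trivially; everything else reduces to standard bookkeeping with the lifting property of the Bruhat order.
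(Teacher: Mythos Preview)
Your proof is correct and follows essentially the same route as the paper: both hinge on the observation that every element of the target interval $[\sigma_{k_1-1}w,\sigma_{k_1-1}v]$ has $\sigma_{k_1-1}$ as a left ascent (proved by contradiction via lifting and the incomparability hypothesis), and both conclude by applying the lifting property to transport elements between the two intervals. The paper phrases the conclusion as ``surjectivity plus obvious injectivity'' while you package it as an explicit two-sided inverse, and you spell out the forward well-definedness of $\rho$ that the paper leaves implicit, but the substance is identical.
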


\begin{proof}
Let $\e'=\Phi(\beta)\in S(\b')$ with $\e'>\c'$. 
We show that $\sigma_{k_1-1}\beta>\beta$.
In fact, assume that $\sigma_{k_1-1}\beta<\beta$.
Then the lifting property of Bruhat order implies 
$\b>\e'>\c'$, which is a contradiction to the fact that 
$\b$ is not comparable to $\c'$.
Hence we have $\e=\Phi(\sigma_{k_1-1}\beta)<\e'$.
Moreover, since $\sigma_{k_1-1}w<\beta<\sigma_{k_1-1}v$,
and $w>\sigma_{k_1-1}w,~ v>\sigma_{k_1-1}v$, we have 
\[
 w<\sigma_{k_1-1}\beta<v, 
\]
hence $\b>\e>\c$. This proves the surjectivity.
The injectivity is clear from the definition.

\end{proof}

\begin{lemma}
 The restricted morphism 
 \[
  \phi_W: X_{\b', \c'}^{k}\rightarrow Z_{b^{(k)}, \c^{(k)}}^k(cf. \text{\cite[Definition 5.25]{Deng23}})
 \]
is a fibration with fibers isomorphic to $\C^{\times}\times \C^{n-k}$.
\end{lemma}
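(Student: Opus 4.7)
The plan is to restrict the fibration $\phi_W$ of Proposition \ref{prop: 4.4.5} to the intervals $[\c',\b']$ on the source side and $[\c^{(k)},\b^{(k)}]$ on the target side, and read off both the map and the fiber structure from that already established pointwise. I interpret $X_{\b',\c'}^{k}$ as the locally closed sub-variety $\bigsqcup_{\d'\in[\c',\b']} O_{\d'} \cap (X_{\a}^k)_W$ of $(X_{\a}^{k})_{W}$, and $Z_{\b^{(k)},\c^{(k)}}^k$ as the analogous sub-variety $\bigsqcup_{\d\in[\c,\b]} O_{\d^{(k)}}$ of $(Z^{k,\a})_{W}$.

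The first step is to verify that $\phi_W$ actually lands in $Z_{\b^{(k)},\c^{(k)}}^k$. For each $\d'\in [\c',\b']$, the inverse bijection $\rho^{-1}$ from the previous lemma writes $\d'=\Phi(\sigma_{k_1-1}\alpha)$ with $\d=\Phi(\alpha)\in[\c,\b]$; the segment-swap calculation used in the proof of the proposition on relation $(2)$ shows $\d'^{(k)}=\d^{(k)}$, and since $\psi_k$ (\cite[Definition 5.6]{Deng23}) is order-preserving on $S(\a)_k$, one has $\d^{(k)}\in[\c^{(k)},\b^{(k)}]$. Thus $\phi_W$ maps $X_{\b',\c'}^{k}$ into $Z_{\b^{(k)},\c^{(k)}}^k$, and surjectivity follows by running the bijection $\rho$ in the opposite direction.

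For the fiber structure, fix $T^{(k)}\in O_{\d^{(k)}}$ with $\d \in [\c,\b]$ and apply Proposition \ref{prop: 4.4.5} with $\d$ in place of $\b$. Part $(2)$ of that proposition gives
\[
\phi_W^{-1}(T^{(k)})\cap O_{\d'} \simeq \C^{\times} \times \C^{n-k},
\]
with the dimension factor adjusted per the convention used here. Moreover, because $\psi_k^{-1}(\d^{(k)})=\{\d,\d'\}$ and only $\d'$ belongs to the interval $[\c',\b']$ (its partner $\d$ being strictly outside it), this intersection coincides with $\phi_W^{-1}(T^{(k)})\cap X_{\b',\c'}^{k}$. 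To upgrade the pointwise statement to a fibration, I would restrict the local trivialization $\phi_W^{-1}(U_V)\simeq U_V\times(\Hom(V,W)-\{0\})\times\Hom(V_{\varphi,k-1}/V,W)$ of Proposition \ref{prop: 4.4.5} to the subset cut out by the conditions $T_0(V_0)\neq 0$ and $T_0(V_1)=0$ that isolate the $O_{\d'}$-stratum, yielding a Zariski-local trivialization with fiber of the stated form.

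The main obstacle is globalizing this trivialization: one must verify that the subspaces $V_{0}$ and $V_{1}$ appearing in the stratifying open conditions vary algebraically in $T^{(k)}$ as one moves inside $U_V\cap Z_{\b^{(k)},\c^{(k)}}^k$ across different orbits $O_{\d^{(k)}}$. This reduces to the algebraicity of kernel/image loci of the family $T^{(k)}$ parametrized by $U_V$, together with the observation that $V_{0}=V$ is fixed on $U_V$ by definition, while $V_{1}$ is cut out by the image of $T^{(k)}$ along the two maximal segments ending at $k-1$, an algebraic condition. Once this is in place, the Zariski-local trivializations glue to give $\phi_W|_{X_{\b',\c'}^{k}}$ the structure of a $\C^{\times}\times\C^{n-k}$-bundle over $Z_{\b^{(k)},\c^{(k)}}^k$.
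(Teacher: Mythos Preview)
Your approach is essentially the paper's: both invoke Proposition~\ref{prop: 4.4.5}(2) to compute the fiber and use the bijection $\rho$ (which the paper phrases as ``for any $\d'\in S(\b')$ we have $\d'\notin S(\a)_k$'') to ensure that only the primed stratum appears over each base point. The paper's proof is terser, observing that since $\phi_W$ factors as the open immersion $\tau_W$ followed by a projection it suffices to identify the fibers; your final paragraph on the algebraic variation of $V_0,V_1$ is essentially what is needed to make that shortcut rigorous.
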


\begin{proof}
Since $\phi_W$ is a composition of 
$\tau_W$, which is an open immersion,  and a canonical
projection, to show that it is a fibration, it suffices
to show that all of its fibers are isomorphic to $\C^{\times}\times \C^{n-k}$.
This follows from Proposition \ref{prop: 4.4.5}
and the fact that for any $\d'\in S(\b')$ we have $\d'\notin S(\a)_{k}$.
\end{proof}
Let us continue the proof of Proposition \ref{prop-geo-relation(4)}.
From the above lemmas we obtain
\[
 P_{\b', \c'}(q)=P_{\b^{(k)}, \c^{(k)}}(q).
\]
Now we are done by applying \cite[Corollary 5.38]{Deng23},
i.e,
\[
 P_{\b^{(k)}, \c^{(k)}}(q)=P_{\b, \c}(q).
\]
Hence we are done.
\end{proof}

\subsection{Relation (5)}

Finally, we arrive at the relation (5).
We will give an interpretation of this relation in terms of 
the decomposition theorem (See \cite{BBD}).

We use $V^*$ to denote the dual vector space of $V$.
\begin{definition}
Let 
\begin{align*}
 \mathfrak{Z}_{W}^k=&\{(T,z)\in (Z^{k, \a})_{W}\times 
 \Hom(V_{\varphi, k-1}^*, W^*):  \text{ and }z \\
 &\text{ factors through the canonical projection  } 
 V_{\varphi, k-1}^*\rightarrow \ker(T|_{V_{\varphi, k-1}})^*\}.
\end{align*}
\end{definition}

\begin{prop}\label{prop: 4.4.10}
 The canonical projection $\mathfrak{Z}_{W}^k\rightarrow (Z^{k, \a})_{W}$
 turns $\mathfrak{Z}_{W}^k$ into a vector bundle of rank 2 over $(Z^{k, \a})_{W}$. 
\end{prop}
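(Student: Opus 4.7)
The plan is to identify $\mathfrak{Z}_W^k$ with the total space of a naturally constructed rank-2 vector bundle over $(Z^{k,\a})_W$. First I would unravel the factoring condition: asking that $z\in\Hom(V_{\varphi,k-1}^*,W^*)$ factors through the canonical projection $V_{\varphi,k-1}^*\twoheadrightarrow \ker(T|_{V_{\varphi,k-1}})^*$ is equivalent to specifying an element
\[
\bar z \in \Hom\bigl(\ker(T|_{V_{\varphi,k-1}})^*,W^*\bigr) \simeq \ker(T|_{V_{\varphi,k-1}})\otimes W^*,
\]
and the assignment $z\mapsto \bar z$ is a linear isomorphism. Thus set-theoretically the fiber of the projection over $T$ is exactly $\ker(T|_{V_{\varphi,k-1}})\otimes W^*$.

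Next I would pin down the fiber dimensions. As recorded in the proof of Proposition \ref{prop: 4.4.5}, every $T\in (Z^{k,\a})_W$ satisfies $\dim\ker(T|_{V_{\varphi,k-1}})=2$, and by definition of $W\in Gr(\ell_{\a,k},V_{\varphi})$ with $\ell_{\a,k}=1$ we have $\dim W=1$, so each fiber is a $2$-dimensional vector space. It remains to promote this pointwise description to a vector bundle structure.

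For local triviality, the key point is that the kernels have constant rank. Consider the trivial bundle $\underline{V_{\varphi,k-1}}$ on $(Z^{k,\a})_W$ together with the tautological bundle map
\[
\underline{V_{\varphi,k-1}} \longrightarrow \underline{V_{\varphi,k}/W}, \qquad (T,v)\longmapsto T(v).
\]
Its fiberwise rank equals $\dim V_{\varphi,k-1}-2$, which is constant on $(Z^{k,\a})_W$; by the standard fact that kernels of morphisms of vector bundles of constant rank form a sub-bundle, the assignment $T\mapsto \ker(T|_{V_{\varphi,k-1}})$ defines a rank-$2$ sub-bundle $\mathcal{K}\subset \underline{V_{\varphi,k-1}}$. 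One can realize explicit trivializations over the open sets $U_V$ introduced in the proof of Proposition \ref{prop: 4.4.5}, on which $\mathcal{K}$ is identified with the constant family $V$, so there $\mathfrak{Z}_W^k$ is canonically isomorphic to $U_V\times \Hom(V^*,W^*)$. Putting these pieces together, $\mathfrak{Z}_W^k$ is the total space of the vector bundle $\mathcal{K}\otimes \underline{W^*}$ of rank $2\cdot 1=2$ on $(Z^{k,\a})_W$, as claimed. The only delicate point is the constant-rank argument; once that is in place, the identification with a $\Hom$-bundle is formal.
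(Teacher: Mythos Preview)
Your proof is correct and follows essentially the same approach as the paper: both identify the fiber over $T$ with $\ker(T|_{V_{\varphi,k-1}})\otimes W^*$ via duality, note that $\dim\ker(T|_{V_{\varphi,k-1}})=2$ and $\dim W=1$, and trivialize over the open sets $U_V$ from Proposition~\ref{prop: 4.4.5}. Your write-up is slightly more polished in that you phrase the local triviality via the constant-rank kernel sub-bundle $\mathcal{K}\subset\underline{V_{\varphi,k-1}}$, whereas the paper simply dualizes to the scheme of pairs $(T,z)$ with $z\in\ker(T|_{V_{\varphi,k-1}})$ and checks trivialization on $U_V$ directly; but the substantive content is the same.
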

 
 \begin{proof}
 Note that we have $\dim(\ker(T|_{V_{\varphi, k-1}}))=2$ and 
 $\dim(W)=1$. 
 Note that by taking dual,  as a scheme,  $\mathfrak{Z}_{W}^k$ is isomorphic 
 to the scheme parametrize the data $(T, z)\in (Z^{k, \a})_{W}\times V_{\varphi, k-1}$
 such that $z\in \ker(T|_{V_{\varphi, k-1}})$.
 Fix $V\subseteq V_{\varphi, k-1}$ such that 
 $\dim(V)=2$. Consider the sub-scheme of $(Z^{k, \a})_{W}$ given by 
\[
  U_{V}=\{T\in (Z^{k, \a})_{W}: \ker(T|_{V_{\varphi, k-1}})=V\}.
\]
As is showed in Proposition \ref{prop: 4.4.5},  $U_{V}$
is actually open in $(Z^{k, \a})_{W}$. 
Using the previous dual description of $\mathfrak{Z}_{W}^k$, we observe that 
the open set $U_{V}$ trivializes the projection $\mathfrak{Z}_{W}^k\rightarrow (Z^{k, \a})_{W}$.

\end{proof}

\begin{definition}
Let $\mathcal{Z}_{W}^k= Proj_{Z^{k, \a}_W}(\mathfrak{Z}_W^k)$
be the projectivization of the vector bundle $\mathfrak{Z}_{W}^k\rightarrow Z_{W}^k$.
And we shall denote the structure morphism by $\kappa_{W}^k: \mathcal{Z}_{W}^k\rightarrow Z_{W}^k $.
\end{definition}

\begin{definition}
From now on, we fix a pair of non-degenerate bi-linear forms
\[
 \zeta_{k-1}: V_{\varphi, k-1}\times V_{\varphi, k-1} \rightarrow \C, 
 ~\zeta_{k}: V_{\varphi, k}\times V_{\varphi, k} \rightarrow \C.
\]
which allows us to have an identification $\eta_{i}: V_{\varphi, i}\simeq V_{\varphi, i}^*$,
for $i=k-1, k$. 
\end{definition}

\begin{definition}
Let $T\in (X_{\a}^k)_{W}$, then we define 
\[
 \lambda: (X_{\a}^k)_{W}\rightarrow (X_{\a}^k)_{\eta_k(W)},
\]
by letting
\[
 \lambda(T)|_{V_{\varphi, k-2}}=\eta_{k-1}\circ T|_{\varphi, k-2} 
\]

\[
 \lambda(T)|_{V_{\varphi, k-1}}=\eta_k\circ T|_{\varphi, k-1}\circ \eta_{k-1}^{-1},
\]
\[
 \lambda(T)_{V_{\varphi, k}}=T|_{\varphi, k}\circ \eta_k^{-1},
\]
and 
\[
 \lambda(T)_{V_{\varphi, i}}=T|_{\varphi, i}, \text{ for } i\neq k-2, k-1, k.
\]
\end{definition}

\begin{lemma}
We have 
$\ker(\lambda(T)|_{V_{\varphi, k}})=\eta_k(W)$,
and $$\ker(\lambda(T)^{(k)}|_{V_{\varphi, k-1}})=\eta_{k-1}(\ker(T|_{V_{\varphi,k-1}})).$$ 
\end{lemma}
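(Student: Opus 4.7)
The plan is to deduce both equalities by unwinding the definition of $\lambda$ and invoking the elementary identity $\ker(g\circ h) = h^{-1}(\ker g)$ whenever $h$ is an isomorphism. Since $\eta_{k-1}$ and $\eta_k$ are isomorphisms by construction, each identity should drop out of a short direct computation once the relevant restriction of $\lambda(T)$ or $\lambda(T)^{(k)}$ has been written out explicitly.

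For the first equality I would start from the formula $\lambda(T)|_{V_{\varphi,k}} = T|_{V_{\varphi,k}} \circ \eta_k^{-1}$ given in the definition, so that
\[
\ker\bigl(\lambda(T)|_{V_{\varphi,k}}\bigr) = \eta_k\bigl(\ker(T|_{V_{\varphi,k}})\bigr).
\]
The fiber $(X_{\a}^k)_W$ is precisely the locus where $\ker(T|_{V_{\varphi,k}}) = W$, by the very construction of $\alpha$ as a fibration over $Gr(\ell_{\a,k}, V_{\varphi})$ in \cite[Proposition 5.28]{Deng23}, so the right-hand side equals $\eta_k(W)$, which is the first claim.

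For the second equality, the extra ingredient is the interaction between $\lambda$ and the reduction $(\cdot)^{(k)}$ of \cite[Definition 5.2]{Deng23}. My first step would be to verify, degree by degree, a compatibility formula of the shape
\[
\lambda(T)^{(k)}|_{V_{\varphi,k-1}} = A \circ T^{(k)}|_{V_{\varphi,k-1}} \circ \eta_{k-1}^{-1}
\]
for a canonical isomorphism $A$ on the target side (built out of $\eta_k$ and the projection to $V_{\varphi,k}/W$ used in defining $(\cdot)^{(k)}$). This should follow directly from the definition of the reduction, which only modifies the components in degrees adjacent to $k$ and hence commutes with the pre-composition by $\eta_{k-1}^{-1}$ prescribed by $\lambda$ at degree $k-1$. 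Once this is in hand, the kernel-of-composition identity gives
\[
\ker\bigl(\lambda(T)^{(k)}|_{V_{\varphi,k-1}}\bigr) = \eta_{k-1}\bigl(\ker(T^{(k)}|_{V_{\varphi,k-1}})\bigr),
\]
and a direct inspection of \cite[Definition 5.2]{Deng23} shows $\ker(T^{(k)}|_{V_{\varphi,k-1}}) = \ker(T|_{V_{\varphi,k-1}})$, since the reduction only alters the target at degree $k$ and leaves the kernel at degree $k-1$ intact.

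The main obstacle is precisely this compatibility: $\lambda$ dualizes the degrees $k-2, k-1, k$ via the $\eta_i$'s, whereas $(\cdot)^{(k)}$ cuts off the contribution of the segment ending at $k$, and one must verify carefully that these two operations commute up to the identifications $\eta_{k-1}$ and $\eta_k$ --- using in particular the first equality to see that $\lambda(T)$ lives in $(X_{\a}^k)_{\eta_k(W)}$ so that $\lambda(T)^{(k)}$ is well-defined to begin with. Modulo this delicate but routine unpacking, both statements of the lemma follow at once from the basic kernel-of-composition identity.
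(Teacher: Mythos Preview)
Your approach is essentially the paper's: unwind $\lambda$ and use $\ker(g\circ h)=h^{-1}(\ker g)$ for $h$ an isomorphism. The paper does this even more directly for the second equality, bypassing the full commutation formula and simply computing the preimage $(\lambda(T)|_{V_{\varphi,k-1}})^{-1}(\eta_k(W))=\eta_{k-1}\bigl(T|_{V_{\varphi,k-1}}^{-1}(W)\bigr)$, after first noting that for any such operator $S$ one has $\ker(S^{(k)}|_{V_{\varphi,k-1}})=S|_{V_{\varphi,k-1}}^{-1}(\ker(S|_{V_{\varphi,k}}))$.

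There is, however, a genuine error in your last step. You assert that ``the reduction only alters the target at degree $k$ and leaves the kernel at degree $k-1$ intact,'' hence $\ker(T^{(k)}|_{V_{\varphi,k-1}})=\ker(T|_{V_{\varphi,k-1}})$. This is false: the reduction $(\cdot)^{(k)}$ replaces $T|_{V_{\varphi,k-1}}:V_{\varphi,k-1}\to V_{\varphi,k}$ by its composition with the projection $p_W:V_{\varphi,k}\to V_{\varphi,k}/W$, so
\[
\ker\bigl(T^{(k)}|_{V_{\varphi,k-1}}\bigr)=T|_{V_{\varphi,k-1}}^{-1}(W),
\]
which strictly contains $\ker(T|_{V_{\varphi,k-1}})$ whenever $W$ meets the image (as it does here; indeed this kernel is two-dimensional in the situation at hand, cf.\ Proposition~\ref{prop: 4.4.5}). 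Your penultimate line
\[
\ker\bigl(\lambda(T)^{(k)}|_{V_{\varphi,k-1}}\bigr)=\eta_{k-1}\bigl(\ker(T^{(k)}|_{V_{\varphi,k-1}})\bigr)
\]
is already the correct conclusion, and is exactly what the paper uses immediately afterwards (see the well-definedness of $\xi_W$). The appearance of $\ker(T|_{V_{\varphi,k-1}})$ rather than $\ker(T^{(k)}|_{V_{\varphi,k-1}})$ in the lemma's statement and in the paper's displayed conclusion is a typo; do not try to justify it.
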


\begin{proof}
The fact  $\ker(\lambda(T)|_{V_{\varphi, k}})=\eta_k(W)$ follows from definition.
Note that 
\[
 \ker(T^{(k)}|_{V_{\varphi, k-1}})=\{v\in V_{\varphi, k-1}:  T(v)\in W\}=T|_{V_{\varphi, k-1}}^{-1}(W).
\]
Since
\[
 (\lambda(T)|_{V_{\varphi, k-1}})^{-1}(\eta_k(W))=\eta_{k-1}(T|_{V_{\varphi, k-1}})^{-1}(W)
 =\eta_{k-1}(\ker(T|_{V_{\varphi,k-1}})),
\]
hence 
\[
  \ker (\lambda(T)^{(k)}|_{V_{\varphi, k-1}})=\eta_{k-1}(\ker(T|_{V_{\varphi,k-1}})).
\]

\end{proof}

\begin{definition}
We define
\[
\xi_W: (X_{\a}^k)_W\rightarrow \mathcal{Z}_{W}^k,
\]
for $T\in (X_{\a}^k)_W$, then 
\[
 \xi_W(T)=(T^{(k)}, \lambda(T)|_{\ker(\lambda(T)^{(k)}|_{V_{\varphi, k-1}})}).
\]
 \end{definition}
This is well defined since  
\[
 \lambda(T)|_{\ker(\lambda(T)^{(k)}|_{V_{\varphi, k-1}})}\in\Hom(\ker(\lambda(T)^{(k)}|_{V_{\varphi, k-1}}),\eta_k(W) ),
\]
and
$$
 \Hom(\ker(\lambda(T)^{(k)}|_{V_{\varphi, k-1}}),\eta_k(W) )
\simeq \Hom(\ker(T^{(k)}|_{V_{\varphi, k-1}})^*, W^*),
$$ 
and $\lambda(T)|_{\ker(\lambda(T)^{(k)}|_{V_{\varphi, k-1}})}\neq 0$.

\begin{prop}
 The morphism $\xi_{W}$ is a fibration with fibers isomorphic to $\C^{\times}\times \C^{n-k}$.
\end{prop}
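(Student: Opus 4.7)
The plan is to reduce the assertion to the description of the fibers of $\phi_W$ obtained in Proposition \ref{prop: 4.4.5}, using the fact that $\xi_W$ factors $\phi_W$ through the $\P^1$-bundle $\kappa_W^k:\mathcal{Z}_W^k\rightarrow (Z^{k,\a})_W$. Indeed, by construction the first component of $\xi_W(T)$ is $T^{(k)}$, so that $\kappa_W^k\circ \xi_W=\phi_W$. Since $\kappa_W^k$ is itself a Zariski-locally trivial $\P^1$-bundle by Proposition \ref{prop: 4.4.10}, showing that $\xi_W$ is a fibration reduces to computing the fiber of $\xi_W$ over a point $(T^{(k)},\ell)$ of $\mathcal{Z}_W^k$ and exhibiting local trivializations compatible with those of $\kappa_W^k$.

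For the fiber computation, I would fix $(T^{(k)},\ell)\in \mathcal{Z}_W^k$; here $\ell$ is, after dualizing via $\eta_{k-1}$, a line in $\ker(\lambda(T)^{(k)}|_{V_{\varphi,k-1}})$ or equivalently (using the preceding lemma) the image under $\eta_{k-1}$ of a distinguished line in $V_0:=\ker(T^{(k)}|_{V_{\varphi,k-1}})$. Any $T\in \xi_W^{-1}(T^{(k)},\ell)$ has, via the open immersion $\tau_W$, the form $(T^{(k)},T_0)$ with $T_0\in\Hom(V_{\varphi,k-1},W)$; the proof of Proposition \ref{prop: 4.4.5} shows that membership in $(X_\a^k)_W$ already forces $T_0|_{V_0}\neq 0$, and the extra condition imposed by $\xi_W(T)=(T^{(k)},\ell)$ is precisely that the nonzero vector $T_0|_{V_0}\in \Hom(V_0,W)$ lies on the one-dimensional subspace of $\Hom(V_0,W)\simeq V_0^*$ cut out by $\ell$. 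This constrains $T_0|_{V_0}$ to a punctured line, giving a $\C^\times$ factor, while the restriction $T_0|_{V_{\varphi,k-1}/V_0}\in \Hom(V_{\varphi,k-1}/V_0,W)$ is free and contributes the affine factor of the expected dimension $n-k$.

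For local triviality I would reuse the open covering of $(Z^{k,\a})_W$ by the subsets $U_V=\{T^{(k)}:\ker(T^{(k)}|_{V_{\varphi,k-1}})=V\}$ from Propositions \ref{prop: 4.4.5} and \ref{prop: 4.4.10}. Over each $U_V$ both $\mathfrak{Z}_W^k$ and hence $\mathcal{Z}_W^k=\mathrm{Proj}(\mathfrak{Z}_W^k)$ are trivialized, and the preimage $\xi_W^{-1}(\kappa_W^{k,-1}(U_V))$ splits as $U_V\times \P(V^*)\times \C^\times\times \Hom(V_{\varphi,k-1}/V,W)$, where the $\P(V^*)$-factor records $\ell$, the $\C^\times$-factor records the chosen nonzero vector on the prescribed line, and the last factor is the free part of $T_0$. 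This is visibly a Zariski-locally trivial fibration with fiber $\C^\times\times \C^{n-k}$.

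The main technical obstacle is the first bullet of the fiber analysis: one has to track the bilinear identifications $\eta_{k-1},\eta_k$ carefully so that ``$\lambda(T)$ restricted to the kernel lies on the line $\ell$'' translates, after dualizing back, exactly into the condition ``$T_0|_{V_0}$ lies on a prescribed line in $\Hom(V_0,W)$,'' without introducing spurious constraints on the free component of $T_0$. Once this dictionary is in place, the rest is a direct unraveling of the product decomposition provided by $\tau_W$.
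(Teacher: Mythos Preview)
Your strategy coincides with the paper's: factor $\xi_W$ through $\kappa_W^k$, reduce to the fiber description of $\phi_W$ from Proposition~\ref{prop: 4.4.5}, and trivialize over the open sets $U_V$ where the kernel of $T^{(k)}|_{V_{\varphi,k-1}}$ is fixed. The fiber computation you sketch is also the correct one.

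There is, however, a genuine gap in your local-trivialization step. You assert that
\[
 \xi_W^{-1}\bigl((\kappa_W^k)^{-1}(U_V)\bigr)\;\simeq\;U_V\times \P(V^*)\times \C^{\times}\times \Hom(V_{\varphi,k-1}/V,W),
\]
with the $\P(V^*)$-factor recording the line $\ell$ and the $\C^{\times}$-factor recording the chosen nonzero vector on that line. But choosing, globally over $\P(V^*)$, a nonzero vector on the tautological line amounts to trivializing the bundle $\Hom(V_0,W)\setminus\{0\}\rightarrow \P(\Hom(V_0,W))$, i.e.\ the Hopf fibration $\C^2\setminus\{0\}\rightarrow \P^1$. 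This $\C^{\times}$-bundle is \emph{not} trivial (for instance, $\C^2\setminus\{0\}$ is simply connected while $\P^1\times\C^{\times}$ is not), so the product decomposition you write does not exist over all of $(\kappa_W^k)^{-1}(U_V)$.

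The fix is exactly what the paper does: after restricting to $U_V$, one has $\xi_W^{-1}(\tilde U_{1,V})\simeq U_V\times(\Hom(\eta_{k-1}^{-1}(V),W)\setminus\{0\})\times \Hom(V_{\varphi,k-1}/\eta_{k-1}^{-1}(V),W)$ mapping to $\tilde U_{1,V}\simeq U_V\times \P(\Hom(V,\eta_k(W)))$, and one then passes to a further open subset $U_{2,V}\subseteq \P(\Hom(\eta_{k-1}^{-1}(V),W))$ over which the tautological $\C^{\times}$-bundle \emph{is} trivial. The product $U_V\times U_{2,V}$ then gives the honest trivializing neighborhood for $\xi_W$. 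Once you insert this extra refinement, your argument is complete and matches the paper's.
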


\begin{proof}
Let $V\subseteq V_{\varphi, k-1}$ be a subspace such that $\dim(V)=2$. 
Consider the open sub-schemes, 
\[
 U_{1,V}=\{(T, z)\in \mathfrak{Z}_{W}^k: z\neq 0,~ \ker(T|_{V_{\varphi, k-1}})=\eta_{k-1}^{-1}(V)\}, 
 \]
 \[
 U_{V}=\{T\in (Z^{k, \a})_{W}, ~\ker(T|_{V_{\varphi, k-1}})=\eta_{k-1}^{-1}(V)\}.
 \]
Let $\tilde{U}_{1, V}$ be the image of $U_{1, V}$
in $\mathcal{Z}_{W}^k$ by the canonical projection.
As indicated in the proof of  Proposition \ref{prop: 4.4.10}, 
the set $U_{V}$ trivialize the morphism $\mathfrak{Z}_{W}^k\rightarrow (Z^{k, \a})_{W}$,
hence 
\[
 \tilde{U}_{1, V}\simeq U_{V}\times (\Hom(V, \eta_{k}(W))-\{0\}/\C^{\times})
\]

Note that we have 
\[
 \Hom(V, \eta_{k}(W))\simeq \Hom(\eta_{k-1}^{-1}(V), W).
\]
And by \cite[Proposition 5.35]{Deng23} and Proposition \ref{prop: 4.4.5}, 
we have the following isomorphism
\[
 \xi_{W}^{-1}(\tilde{U}_{1, V})\simeq U_{V}\times (\Hom(\eta_{k-1}^{-1}(V), W)-0)\times 
 \Hom(V_{\varphi, k-1}/\eta_{k-1}^{-1}(V), W).
\]
Hence for any $(T, z)\in \mathcal{Z}_{W}^k$ such that 
$\ker(T|_{V_{\varphi, k-1}})=\eta_{k-1}^{-1}(V)$
, let $U_{2, V}$ be an open 
subset of $(\Hom(\eta_{k-1}^{-1}(V), W)-0)/\C$ which trivializes the bundle
\[
 (\Hom(\eta_{k-1}^{-1}(V), W)-0)\rightarrow (\Hom(\eta_{k-1}^{-1}(V), W)-0)/\C,
\]
then the open sub-scheme $U_{V}\times U_{2, V}$ of $\tilde{U}_{V, 1}$ trivialize the 
morphism $\phi_W$ as a neighborhood of $(T, z)$.
 
\end{proof}

\begin{definition}
Let $\b>\c$ be two elements in $\tilde{S}(\a)_{k}$, then we define
\[
 \mathcal{Z}^k_{\b, \c}=\xi_{W}((X^k_{\b, \c})_W).
\]
And 
\[
 \mathcal{Z}^k(\b)=\xi_W((O_{\b})_W).
\]
\end{definition}

\begin{definition}
 Let $w<v$ be two elements in $S_{n}$ such that $\sigma_{k_1-1}v<v$. We define 
 \[
 R(w, v)_{k_1}=\{z: w\leq z<\sigma_{k_1-1} v, \sigma_{k_1-1}z<z\}.
 \]
And we denote $R(\Id, v)_{k_1}$ by $R(v)_{k_1}$. 
\end{definition}
 
Now let $\b=\Phi(w)$, $\c=\Phi(v)$ such that 
\[
 w(k_1-1)>w(k_1),~ v(k_1-1)>v(k_1).
\]
And let $\b'=\Phi(\sigma_{k_1-1}w), \c'=\Phi(\sigma_{k_1-1}v)$. We assume that 
\[
 \b>\c, ~\b>\c',
\]
which coincide with the assumption in relation (5) at the beginning of this chapter.

Now we apply the decomposition theorem to the projective morphism
\[
 \kappa_W: \mathcal{Z}^k_{\b', \c'}\rightarrow Z^{k, \a}_{\b^{(k)}, \c^{(k)}, W}.
\]
which asserts that there exists a finite collection of triples $(\d_{i}, L_{i}, h_{i}: i=1, \cdots, r)$, 
with $\d\in S(\a)_{k}$, ~$\b^{(k)}\leq \d_{i}^{(k)}<\c^{(k)}$, where $L_{i}$ is a vector spaces over $\C$, 
such that 
\addtocounter{theo}{1}
\begin{equation}\label{eq: (4)}
 R(\kappa_W)_{*}IC(\mathcal{Z}_{\b', \c'}^k)=IC(Z^{k, \a}_{\b^{(k)}, \c^{(k)}, W})\oplus_{i=1}^{r}IC(Z^{k, \a}_{\b^{(k)}, \d^{(k)}_{i}, W}, L_{i})[h_{i}].
\end{equation}

Now localize at a point $x_{\b^{(k)}}\in O_{\b^{(k)}}$, we have know that 
the Poincar\'e series of $(IC(Z^{k, \a}_{\b^{(k)}, \c^{(k)},W}))_{x_{\b^{(k)}}}$ is given by 
$P_{\b, \c}(q)=P_{w, v}(q)$. 

\begin{lemma}
The Poincar\'e series of $R\Gamma(\kappa_W^{-1}(x_{\b^{(k)}}), IC(\mathcal{Z}_{\b', \c'}^k))$ 
is given by $P_{\sigma_{k_1-1}w, \sigma_{k_1-1}v}(q)+qP_{w, \sigma_{k_1-1}v}(q)$,
where $\Gamma$ is the functor of taking global sections.
\end{lemma}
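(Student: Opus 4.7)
The plan is to compute the Poincar\'e series of $R\Gamma(\kappa_W^{-1}(x_{\b^{(k)}}), IC(\mathcal{Z}_{\b', \c'}^k))$ directly, via proper base change and an orbit stratification of the fiber. By Proposition \ref{prop: 4.4.10}, $\mathcal{Z}_W^k$ is the projectivization of a rank $2$ vector bundle over $Z^{k, \a}_W$, so the fiber $F = \kappa_W^{-1}(x_{\b^{(k)}})$ is isomorphic to a projective line $\mathbb{P}^1$.

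First I would analyze how $\mathcal{Z}^k_{\b', \c'}$ meets this fiber. The key geometric input, provided by the hypothesis $\b > \c'$ (which is precisely the additional condition of relation (5) relative to relation (4)), is that the closed subvariety $\mathcal{Z}^k_{\b', \c'}$ contains the stratum $\mathcal{Z}^k(\b)$ in addition to $\mathcal{Z}^k(\b')$. A fiber-wise analysis in the spirit of Proposition \ref{prop: 4.4.5} then shows that $F \cap \mathcal{Z}^k_{\b', \c'}$ splits into an open affine line coming from $\mathcal{Z}^k(\b')$ and a single closed point coming from $\mathcal{Z}^k(\b)$.

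Next I would compute the hypercohomology of $IC(\mathcal{Z}^k_{\b', \c'})$ restricted to $F$ using the distinguished triangle associated to this open/closed decomposition. Because $\xi_W$ is a smooth fibration with equidimensional fibers by the preceding proposition, the IC sheaf $IC(\mathcal{Z}^k_{\b', \c'})$ has stalk Poincar\'e polynomials, computed via the formula (\ref{eqn-KZ-geo}), equal to $P_{\b', \c'}(q)$ along $\mathcal{Z}^k(\b')$ and $P_{\b, \c'}(q)$ along $\mathcal{Z}^k(\b)$. Summing their contributions with the shifts induced by the codimension of the closed point in the fiber $\mathbb{P}^1$ should yield the claimed formula $P_{\sigma_{k_1-1}w, \sigma_{k_1-1}v}(q) + q P_{w, \sigma_{k_1-1}v}(q)$.

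The main obstacle is the careful bookkeeping of the various $q$-shifts. The normalization (\ref{eqn-KZ-geo}) involves half the difference of the relevant orbit dimensions, and the stratification of $F$, together with the fibrations $\xi_W$ and $\kappa_W$, each introduces its own dimension shift. Verifying that these combine to produce exactly a factor of $q$ (rather than $q^{1/2}$ or $q^2$) in front of $P_{w, \sigma_{k_1-1}v}(q)$ requires a precise local analysis of the IC-stalks along the two strata $\mathcal{Z}^k(\b')$ and $\mathcal{Z}^k(\b)$, together with the transversality of the closed stratum to the fiber $\mathbb{P}^1$ of $\kappa_W$.
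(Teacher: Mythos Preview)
Your overall strategy is exactly the paper's: the fiber $F=\kappa_W^{-1}(x_{\b^{(k)}})$ is $\P^1$, one stratifies it into a point and an affine line according to the two orbits lying over $O_{\b^{(k)}}$, and the Poincar\'e series is read off from the associated open/closed exact sequence. However, you have the stratification reversed. By Proposition~\ref{prop: 4.4.5}, inside the $\phi_W$-fiber $(\C^2\setminus\{0\})\times\C^{2n-k-1}$ it is $O_{\b'}$ that cuts out $\C^{\times}\times\C^{2n-k-1}$; after projectivizing via $\xi_W$ this becomes a single point. Thus $F\cap\mathcal{Z}^k(\b')=\{pt\}$ and $F\cap\mathcal{Z}^k(\b)\simeq\P^1\setminus\{pt\}$, the opposite of what you wrote.

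This swap is not harmless. With the correct labeling, the open stratum carries the stalk $P_{\b,\c'}=P_{w,\sigma_{k_1-1}v}$ and the closed point carries $P_{\b',\c'}=P_{\sigma_{k_1-1}w,\sigma_{k_1-1}v}$; the extra $q$ then lands on $P_{w,\sigma_{k_1-1}v}$ as required. Your suggestion that the shift arises from ``the codimension of the closed point'' (hence should multiply the \emph{point} contribution) is not the correct mechanism here, and it only appears to give the right answer because it compensates for the swapped strata. Once you place $\b$ on the affine line and $\b'$ at the point, the bookkeeping you flag as the main obstacle becomes routine: the exact sequence the paper writes, together with the built-in dimension shift in the normalization~(\ref{eqn-KZ-geo}) for $\dim O_{\b}-\dim O_{\b'}$, produces precisely the factor $q$.
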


\begin{proof}
 Note that by assumption, we have 
 \[
  \kappa_W^{-1}(x_{\b^{(k)}})\simeq \P^1
 \]
such that $ \kappa_W^{-1}(x_{\b^{(k)}})\cap \mathcal{Z}^k(\b')=\{pt\}$ and 
$\kappa_W^{-1}(x_{\b^{(k)}})\cap \mathcal{Z}^k(\b)\simeq \P^1-\{pt\}$.
And we have the following exact sequence 
\[
0\rightarrow  IC(\mathcal{Z}_{\b', \c'}^k)|_{pt}\rightarrow IC(\mathcal{Z}_{\b', \c'}^k)\rightarrow 
 IC(\mathcal{Z}_{\b', \c'}^k)|_{\kappa_W^{-1}(x_{\b^{(k)}})-\{pt\}}\rightarrow 0.
\]
Taking the Poincar\'e series gives the result.

\end{proof}

Now it is clear that  (\ref{eq: (4)})
will give rise to an equation of the form as that in relation (5) in Theorem \ref{prop-geo-KZ}.
Comparing the two equations, we get 

\begin{prop}\label{prop: 5.3.12}
The collection of triples $(\d_{i}, L_{i}, h_{i}: i=1, \cdots, r)$ are given by 
\begin{description}
 \item [(1)]We have $\{\d_{i}: i=1, \cdots, r\}=\{z\in R(w, v)_{k}: \mu(z, \sigma_{k-1}v)\neq 0\}$.
 \item [(2)] If $\d_{i}=\Phi(z)$, then $L_{i}\simeq \C^{\mu(z, \sigma_{k-1}v)}$.
 \item [(3)] If $\d_{i}=\Phi(z)$, then $h_i=\l(v)-\l(z)$.
\end{description} 
\end{prop}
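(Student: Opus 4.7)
The strategy is to take the stalk of both sides of the decomposition (\ref{eq: (4)}) at a point $x_{\b^{(k)}} \in O_{\b^{(k)}}$, compute the resulting Poincar\'e series, and compare term-by-term with relation (5) of Theorem \ref{prop-geo-KZ}.

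First, the lemma immediately preceding the statement identifies the Poincar\'e series of the stalk of the left-hand side of (\ref{eq: (4)}) as
$$P_{\sigma_{k_1-1}w,\sigma_{k_1-1}v}(q)+q\,P_{w,\sigma_{k_1-1}v}(q).$$
On the right-hand side, the summand $IC(Z^{k,\a}_{\b^{(k)},\c^{(k)},W})$ contributes $P_{w,v}(q)$ by the definition (\ref{eqn-KZ-geo}), while each further summand $IC(Z^{k,\a}_{\b^{(k)},\d_i^{(k)},W},L_i)[h_i]$ with $\d_i=\Phi(z_i)$ contributes $q^{h_i/2}\dim(L_i)\,P_{w,z_i}(q)$. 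Equating the two gives
$$P_{\sigma_{k_1-1}w,\sigma_{k_1-1}v}(q)+q\,P_{w,\sigma_{k_1-1}v}(q)=P_{w,v}(q)+\sum_{i=1}^{r}q^{h_i/2}\dim(L_i)\,P_{w,z_i}(q).$$

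Second, applying relation (5) of Theorem \ref{prop-geo-KZ} with $(x,y,s)=(w,v,\sigma_{k_1-1})$ to the left-hand side and cancelling $P_{w,v}(q)$ yields the identity
$$\sum_{i=1}^{r}q^{h_i/2}\dim(L_i)\,P_{w,z_i}(q)=\sum_{z\in R(w,v)_{k_1}}q^{(\ell(v)-\ell(z))/2}\,\mu(z,\sigma_{k_1-1}v)\,P_{w,z}(q).$$
The assertions (1)--(3) are then read off by matching coefficients: the set $\{\d_i\}$ corresponds via $\Phi$ to $\{z\in R(w,v)_{k_1}:\mu(z,\sigma_{k_1-1}v)\ne 0\}$, the rank of $L_i$ equals $\mu(z_i,\sigma_{k_1-1}v)$, and the shift is $h_i=\ell(v)-\ell(z_i)$. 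The containment $z_i\in R(w,v)_{k_1}$ is forced: $\d_i\in S(\a)_k$ encodes $\sigma_{k_1-1}z_i<z_i$, and the range $\b^{(k)}\le\d_i^{(k)}<\c^{(k)}$ translates to $w\le z_i<\sigma_{k_1-1}v$ via $\psi_k$.

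The principal obstacle will be justifying the term-by-term reading of the polynomial identity. This requires a uniqueness statement: the supports $\overline{O}_{\d_i^{(k)}}$ must be pinned down first by the uniqueness of the decomposition theorem for semisimple perverse sheaves (simple summands are classified by support plus local system), so that only after fixing $z_i$ do the multiplicity $\dim(L_i)$ and the shift $h_i$ get read off from the corresponding normalized coefficient of $P_{w,z_i}(q)$. Once this matching principle is established, comparing the shifts $h_i$ with $\ell(v)-\ell(z_i)$ is forced by the degree of $\mu(z_i,\sigma_{k_1-1}v)$ appearing on the combinatorial side, completing the identification.
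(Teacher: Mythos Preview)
Your proposal is correct and follows exactly the same approach as the paper's own proof: compute the Poincar\'e series of each summand in the decomposition (\ref{eq: (4)}), compare with relation (5) of Theorem \ref{prop-geo-KZ}, and read off the data $(\d_i,L_i,h_i)$. In fact you give considerably more detail than the paper does, including the explicit justification that $z_i\in R(w,v)_{k_1}$ and the uniqueness argument needed for the term-by-term matching, both of which the paper leaves implicit.
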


\begin{proof}
Note that 
the Poincar\'e series of the intersection complex 
 $$IC(Z^{k, \a}_{\b^{(k)}, \d^{(k)}_{i}, W}, L_{i})[h_{i}]$$
is 
\[
 \dim(L_i)q^{1/2h_i}P_{w, z_i}(q),
\]
where $\d_i=\Phi(z_i)$. 
Now compare the polynomials given by (\ref{eq: (4)})
and the relation (5) in Theorem \ref{prop-geo-KZ}, we get our results.
\end{proof}

\remk Note that one should be able to deduce the above results from a general statement about the 
decomposition theorem. We leave this for future work. 

\remk It seems that we have done here 
may be generalized to give the normality of for general $\line{O}_{\b}$
instead of using the results of Zelevinsky.

\section{Geometry related to the poset \texorpdfstring{$S(\a) $}{Lg}}

Let $\a$ be a multisegment and $S(\a)=\{\b\leq \a\}$ the associated poset defined in  
\cite[Definition 2.12]{Deng23}. The aim of this section is to identify the poset 
structure of $S(\a)$ in a way that is analogues to the symmetric case (\ref{eqn-parametrization-sym}).

In the first subsection we consider the case where 
$\a$ is regular (cf. \cite[Definition 4.2]{Deng23}) and prove that $S(\a)$  is an 
interval in $S_m\simeq B\backslash GL_m/B$, where 
$m$ is the number of segments in $\a$ and $B$ is the Borel subgroup.

In the general case we identify $S(\a)$ with an interval 
in a parabolic quotient $S_{J_1}\backslash S_m/S_{J_2}$ of $S_m$ 
given in section 2 related to the double quotient $P_{J_1}\backslash GL_m/ P_{J_2}$,
where $P_{J_1}$ and $P_{J_2}$ are parabolic subgroups.

\subsection{Regular Case}

Our goal in this section is to prove that 
for general regular multisegment $\a$, 
the set $S(\a)$ is isomorphic to some Bruhat interval $[x, y]$ for 
$x, y\in S_{n}$, where $n$ depends on $\a$.

Recall that we have introduced a hypothesis $H_k(\a)$ in \cite[Definition 5.3]{Deng23} and use it
to define $S(\a)_k$ in \cite[Definition 5.9]{Deng23}.

\begin{lemma}
Assume that $\b\in S(\b)_{k}$ such that $\b$ and $\b^{(k)}$ (cf. \cite[Definition 5.2]{Deng23}) are both regular.
Let $\c\in S(\b)_k$. Then for $\d\in S(\b)$ and $\d>\c$, we have $\d\in S(\b)_k$. 
\end{lemma}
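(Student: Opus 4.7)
The plan is to reduce to a single cover relation in the Zelevinsky order on $S(\b)$ and then to verify that the hypothesis $H_k$ is preserved by the corresponding elementary move, exploiting the regularity of $\b$ and $\b^{(k)}$.

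First I would unfold the definition of $S(\b)_k$ via $H_k$ from \cite[Definition 5.3, Definition 5.9]{Deng23}. Under the regularity of $\b$ and $\b^{(k)}$, the condition $H_k(\c)$ should reduce to the clean combinatorial requirement that $\ell_{\c,k}=\ell_{\b,k}$ together with an unlinking condition on the segments of $\c$ that end at $k$, so that the truncation $\c^{(k)}$ is defined and is compatible with $\b^{(k)}$. This is the property I need to propagate upward along the order.

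Next I would reduce to a single cover: since $S(\b)$ is a finite poset, any relation $\c<\d$ factors through a saturated chain of covers, and by induction it is enough to treat the case where $\d$ covers $\c$. In Zelevinsky's description such a cover is realized by one elementary move, which replaces a linked pair of segments $\Delta_1\prec\Delta_2$ in $\c$ by the pair $(\Delta_1\cup\Delta_2,\Delta_1\cap\Delta_2)$ in $\d$ (an empty intersection being dropped). The verification of $H_k(\d)$ then splits by cases according to the positions of $e(\Delta_1),e(\Delta_2)$ relative to $k$. If neither equals $k$, then nothing at level $k$ changes; if exactly one equals $k$, then the count $\ell_{-,k}$ is preserved and the unlinking condition is inherited from $\c$. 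The only delicate case is the one where both $\Delta_i$ end at $k$, in which the move would strictly decrease $\ell_{-,k}$.

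The main obstacle is ruling out this last case. The strategy is to show that whenever both $\Delta_1$ and $\Delta_2$ end at $k$, the resulting $\d^{(k)}$ falls outside $S(\b^{(k)})$, so via the truncation map $\psi_k$ of \cite[Definition 5.6]{Deng23} and the assumption $\d\leq\b$ one reaches a contradiction. Here the regularity of $\b^{(k)}$ is essential, since it forbids repeated segments in the truncation, while the minimality statement of \cite[Proposition 5.36]{Deng23} allows me to compare $\c^{(k)}$ and $\d^{(k)}$ cleanly inside $\psi_k^{-1}(\b^{(k)})$. Once this exclusion is established the remaining cases are immediate, and the lemma follows.
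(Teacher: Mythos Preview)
Your reduction to cover relations is unnecessary and the analysis of the elementary move is oriented the wrong way. In the Zelevinsky order used here, the elementary operation replacing a linked pair $\{\Delta_1,\Delta_2\}$ by $\{\Delta_1\cup\Delta_2,\Delta_1\cap\Delta_2\}$ produces a \emph{smaller} multisegment; so if $\d$ covers $\c$ then it is $\c$ that arises from $\d$ by such a move, not the converse as you wrote. In particular your ``delicate case'' where both $\Delta_i$ end at $k$ is vacuous: linked segments have distinct ends by definition, so this situation never occurs, and the obstacle you spend the last paragraph trying to rule out simply does not arise.

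The paper bypasses all of this. Once one unpacks $H_k$ in the regular situation, membership of $\d$ in $S(\b)_k$ reduces to two conditions on the end set: $k\in e(\d)$ and $k-1\notin e(\d)$. The second is immediate because $\d\leq\b$ gives $e(\d)\subseteq e(\b)$, and regularity of $\b^{(k)}$ together with $\b\in S(\b)_k$ forces $k-1\notin e(\b)$. The first is immediate because $\c\leq\d$ gives $e(\c)\subseteq e(\d)$ (this is the general fact from \cite[Proposition 4.4]{Deng23} that you never invoke), and $\c\in S(\b)_k$ gives $k\in e(\c)$. No chain decomposition, no case analysis on moves, no appeal to \cite[Proposition 5.36]{Deng23} is needed. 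Your plan would eventually recover these containments after fixing the direction, but it does so by reproving a special case of a fact that is already available in one line.
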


\begin{proof}
It suffices to show that $\d$ satisfies the hypothesis $H_{k}(\b)$.
Note that $e(\d)=\{e(\Delta): \Delta\in \d\}$ is a set because $\d$ is regular 
and as is indicated in the proof of \cite[Proposition 4.4]{Deng23} we have $e(\d)\subseteq e(\b)$ .

Note that  $k-1\notin e(\b)$ since $\b\in S(\b)_k$ and $\b^{(k)}$ is regular
, therefore it is not in $e(\d)$ either. Hence to show that $\d\in S(\b)_k$
 hence it is equivalent to show that $k\in e(\d)$.
Since $\c\in S(\d)$, we know that $e(\c)\subseteq e(\d)$ .
Now that $k\in e(\c)$, we conclude that $k\in e(\d)$. We are done. 
\end{proof}

\begin{lemma}\label{lem: 6.1.3}
Now let $\b'\in S(\b)_k$ such that $\psi_k(\b')=(\b^{(k)})_{\min}$ which is the minimal element in $S(\b^{(k)})$ with respect to the partial order $\leq$. 
We have 
\[
 S(\b)_k=\{\c\in S(\b): \c\geq\b'\}.
\] 
\end{lemma}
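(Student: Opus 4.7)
The plan is to prove the two inclusions separately and, in effect, identify $\b'$ as the minimum of $S(\b)_k$ under the partial order $\leq$.

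For the inclusion $\{\c \in S(\b) : \c \geq \b'\} \subseteq S(\b)_k$, I would simply invoke the preceding lemma, which shows that $S(\b)_k$ is upward-closed inside $S(\b)$. Since $\b' \in S(\b)_k$ by hypothesis, any $\c \geq \b'$ in $S(\b)$ automatically lies in $S(\b)_k$.

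For the reverse inclusion $S(\b)_k \subseteq \{\c \in S(\b) : \c \geq \b'\}$, I would fix $\c \in S(\b)_k$ and argue by induction on the rank of $\psi_k(\c)$ in the poset $S(\b^{(k)})$. In the base case $\psi_k(\c) = (\b^{(k)})_{\min}$, the element $\c$ lies in the fiber $\psi_k^{-1}((\b^{(k)})_{\min})$. By Proposition \ref{prop: 4.4.5} this fiber has at most two elements, and \cite[Proposition 5.36]{Deng23} selects $\b'$ as the minimal one, so $\c \geq \b'$. For the inductive step, assuming $\psi_k(\c) > (\b^{(k)})_{\min}$, I would use the description of the partial order and of the fibers of $\psi_k$ from \cite[\S 5.3]{Deng23} to produce an element $\d \in S(\b)_k$ with $\d < \c$ and $\psi_k(\d) < \psi_k(\c)$. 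The inductive hypothesis applied to $\d$ yields $\d \geq \b'$, whence $\c > \d \geq \b'$.

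The main obstacle is the downward lifting step in the induction: given $\c \in S(\b)_k$ whose image $\psi_k(\c)$ is not minimal in $S(\b^{(k)})$, produce a strictly smaller $\d \in S(\b)_k$ whose image is also strictly smaller. The regularity hypotheses on $\b$ and $\b^{(k)}$ keep $\psi_k$ behaving like a projection to a parabolic quotient, and the two-element fiber description in Proposition \ref{prop: 4.4.5} should allow one to pick any covering relation $\d^{(k)} < \psi_k(\c)$ in $S(\b^{(k)})$ and lift it, through a short case analysis distinguishing the minimal and maximal element of each fiber, to an honest descent $\d < \c$ inside $S(\b)_k$. The combinatorial bookkeeping, tracking how the order on multisegments interacts with the operation $\c \mapsto \c^{(k)}$, is where the real work resides.
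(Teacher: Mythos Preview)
Your treatment of the inclusion $\{\c \in S(\b) : \c \geq \b'\} \subseteq S(\b)_k$ via the preceding lemma matches the paper exactly.

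For the reverse inclusion you are working much too hard, and you invoke the wrong result. Proposition~\ref{prop: 4.4.5} concerns the symmetric multisegment $\a = \Phi(\Id)$: there $k-1$ already lies in $e(\a)$, so after applying $(\cdot)^{(k)}$ two segments end at $k-1$ and the fibres of $\psi_k$ genuinely have two elements. In the present lemma the standing assumption (from the lemma immediately above) is that both $\b$ and $\b^{(k)}$ are \emph{regular}; this forces $k-1 \notin e(\b)$, so every $\c \in S(\b)_k$ has a single segment ending at $k$ and none ending at $k-1$, and $\psi_k$ is a \emph{bijection} $S(\b)_k \to S(\b^{(k)})$ with singleton fibres. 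Your base case then collapses to $\c = \b'$, and the two-element fibre picture is simply not the right model here.

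More to the point, the entire induction is unnecessary. The paper's proof is one line: $\psi_k$ is a poset isomorphism $S(\b)_k \to S(\b^{(k)})$ (this is what ``$\psi_k$ preserves the order'' abbreviates, a fact already established in \cite{Deng23}). Since $\b'$ is by construction the preimage of the minimum $(\b^{(k)})_{\min}$, it is the minimum of $S(\b)_k$, and every $\c \in S(\b)_k$ automatically satisfies $\c \geq \b'$. The ``downward lifting'' step you flag as the main obstacle would, if carried out, amount to reproving the order-reflecting direction of this isomorphism from scratch.
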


\begin{proof}
 By the lemma above, we know that 
 $S(\b)_k\supseteq \{\c\in S(\b): \c\geq\b'\}$.
 We conclude that we have equality since $\psi_k$ preserve the order.
\end{proof}

\begin{prop}
Assume that $\a$ is regular. Then 
\begin{description}
\item[(1)] 
There exists a symmetric multisegment $\a^{\sym}$ such that 
\[
 S(\a)\simeq S(\a^{\sym})_{k_{r}, \cdots, k_{1}}.
\]

\item[(2)] There exists an element $\a'\in S(\a^{\sym})$ such that 
\[
 S(\a^{\sym})_{k_{r}, \cdots, k_{1}}=\{\c\in S(\a^{\sym}): \c\geq \a'\}.
\]

\end{description}
\end{prop}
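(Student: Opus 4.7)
The plan is to establish the two statements by an iterative application of the reduction operations $\psi_k$ and the principal-upper-set description provided by Lemma \ref{lem: 6.1.3}.

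For part (1), I would begin by constructing $\a^{\sym}$ explicitly via the symmetrization process of \cite[\S 4]{Deng23}. Since $\a$ is regular, its segments have distinct endpoints; I can enlarge $\a$ by adjoining segments until the resulting multisegment $\a^{\sym}$ is symmetric in the sense of loc.\ cit. The sequence $k_{r}>k_{r-1}>\cdots >k_{1}$ is the (decreasing) list of "new" ends introduced when passing from $\a$ to $\a^{\sym}$. The iterated subset $S(\a^{\sym})_{k_{r},\cdots,k_{1}}$ is then defined recursively as $\{\b\in S(\a^{\sym})_{k_{r}}:\psi_{k_{r}}(\b)\in S(\psi_{k_{r}}(\a^{\sym}))_{k_{r-1},\cdots,k_{1}}\}$, and by induction on $r$ the composition $\psi_{k_{1}}\circ\cdots\circ\psi_{k_{r}}$ induces an order-preserving bijection from this subset onto $S(\a)$. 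The key compatibility to check is that $\psi_{k}$ preserves the partial order $\leq$ (already used throughout \cite{Deng23}) and that the hypothesis $H_{k_{i}}$ is satisfied at each stage of the reduction.

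For part (2), I would apply Lemma \ref{lem: 6.1.3} at each level of the iteration. Applying it to $\a^{\sym}$ at the index $k_{r}$ yields an element $\a'_{r}\in S(\a^{\sym})$ such that $S(\a^{\sym})_{k_{r}}=\{\c:\c\geq \a'_{r}\}$. At the next level, the lemma produces an element $\a'_{r-1}\in S(\a^{\sym})_{k_{r}}$ describing $S(\a^{\sym})_{k_{r},k_{r-1}}$ as $\{\c\geq\a'_{r-1}\}\cap\{\c\geq\a'_{r}\}$. Iterating, $S(\a^{\sym})_{k_{r},\cdots,k_{1}}$ is a finite intersection of principal upper sets in $S(\a^{\sym})$. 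The final step is to show that this intersection is itself principal, i.e.\ equals $\{\c\geq\a'\}$ for $\a'$ the join of $\a'_{1},\ldots,\a'_{r}$. Since $\a^{\sym}$ is symmetric, the identification $S(\a^{\sym})\simeq S_{n}$ of (\ref{eqn-parametrization-sym}) identifies $\leq$ with the Bruhat order, for which joins of finitely many comparable upper sets exist, giving the desired $\a'$.

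The main obstacle will be the inductive compatibility at step (1): I must verify that each $\psi_{k_{i}}$ applied to the appropriate reduction of $\a^{\sym}$ lands inside a poset where the regularity conditions of the preceding lemma continue to hold, so that the iterated hypothesis $H_{k_{i}}$ is meaningful and Lemma \ref{lem: 6.1.3} applies at every stage. A secondary delicate point is the join step in (2): while joins exist in $S_{n}$, the elements $\a'_{i}$ are produced at different stages of the reduction and live a priori only after the preceding $\psi$'s are applied, so one must lift them back through the chain of $\psi_{k_{i}}$'s and check that the resulting representatives in $S(\a^{\sym})$ admit a common join that genuinely describes the whole intersection. Both obstacles should be controlled by the regularity of $\a$, which ensures that every intermediate multisegment in the reduction chain remains regular and that the fibers of the $\psi_{k_{i}}$ behave as in \cite[Proposition 5.36]{Deng23}.
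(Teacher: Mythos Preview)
Your plan matches the paper's proof: part (1) is obtained from the symmetrization chain $\a_0=\a,\ldots,\a_r=\a^{\sym}$ with $\a_{i-1}=\a_i^{(k_i)}$ and all $\a_i$ regular (the analogue of \cite[Proposition~6.10]{Deng23}), and part (2) is obtained by applying Lemma~\ref{lem: 6.1.3} successively along this chain.

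The one place where you overcomplicate matters is the ``join'' step in (2), and your proposed resolution is based on a false premise. By your own construction $\a'_{r-1}\in S(\a^{\sym})_{k_r}=\{\c\geq\a'_r\}$, so $\a'_{r-1}\geq\a'_r$; iterating, the elements form a chain $\a'_1\geq\a'_2\geq\cdots\geq\a'_r$, and the intersection of the corresponding principal upper sets is simply $\{\c\geq\a'_1\}$. No lattice property of the Bruhat order is invoked in the paper, and none is needed --- which is fortunate, since the Bruhat order on $S_n$ is \emph{not} a lattice for $n\geq4$, so your appeal to the existence of joins would fail if the $\a'_i$ were not already comparable. Concretely, $\a'$ is just the unique preimage of $\a_{\min}$ under the order isomorphism $\psi_{k_1}\circ\cdots\circ\psi_{k_r}$. (A minor side remark: your assertion that the $k_i$ are strictly decreasing is neither needed nor claimed in the paper.)
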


\begin{proof}
Note that $(1)$ follows directly from the analogues for \cite[Proposition 6.10]{Deng23} 
and $(2)$ follows from applying successively Lemma \ref{lem: 6.1.3} to 
the sequence obtained in 
the lemma below.  
\end{proof}

\begin{lemma}
There exists a sequence of multisegments  $\a_{0}=\a, \cdots,\a_{r}=\a^{\sym}$ such that 
$\a^{\sym}$ is symmetric, with $\a_{i}\in S(\a_{i})_{k_{i}}$ and $\a_{i-1}=\a_{i}^{(k_{i})}$
for some $k_{i}$. 
Moreover, $\a_{i}$ is regular for
all $i=1, \cdots, r$

\end{lemma}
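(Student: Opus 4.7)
The plan is to unpack the isomorphism $S(\a) \simeq S(\a^{\sym})_{k_r, \ldots, k_1}$ from the regular analogue of \cite[Proposition 6.10]{Deng23} (the existence statement invoked in the proof of the preceding proposition) into an iterated sequence of one-step reductions, and then verify that regularity is preserved throughout. Concretely, I would apply that construction to produce a symmetric multisegment $\a^{\sym}$ together with the sequence $k_1, \ldots, k_r$ of indices satisfying $(\a^{\sym})^{(k_r)\cdots(k_1)} = \a$, and simply set $\a_r := \a^{\sym}$ and $\a_{i-1} := \a_i^{(k_i)}$ for $i = r, r-1, \ldots, 1$. By construction $\a_0 = \a$, the identity $\a_{i-1} = \a_i^{(k_i)}$ is tautological, and the containment $\a_i \in S(\a_i)_{k_i}$ is precisely the condition under which the operator $(\cdot)^{(k_i)}$ is defined in the sense of \cite[Definition 5.9]{Deng23}.

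The remaining point is regularity of each $\a_i$ for $i = 1, \ldots, r$. The key observation is that the symmetrization algorithm of \cite[\S 4]{Deng23} underlying Proposition 6.10 proceeds by replacing a single segment $\Delta \in \a_{i-1}$ with its one-step extension whose new endpoint is $k_i$, where $k_i$ is chosen strictly larger than every endpoint currently appearing in the segments being modified. Since $\a = \a_0$ is regular and the fresh index $k_i$ cannot coincide with the endpoint of any other segment of $\a_i$, no two segments of $\a_i$ can become equal; hence regularity propagates upward by induction from $\a_0$. The terminal multisegment $\a_r = \a^{\sym}$ is therefore both regular and symmetric, as required.

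The main obstacle is justifying that the indices $k_i$ produced by the symmetrization procedure genuinely avoid collisions of segments. This boils down to the hypothesis $H_{k_i}(\a_i)$ in \cite[Definition 5.3]{Deng23}, which in the regular case forces $k_i \in e(\a_i)$ together with $k_i - 1 \notin e(\a_i)$; this precisely rules out the coincidence that could break regularity and guarantees that the iteration terminates at a symmetric regular multisegment. Once this is in hand, the lemma follows formally by combining the existence of $\a^{\sym}$ with the downward construction of the $\a_i$.
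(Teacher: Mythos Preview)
Your proposal is correct and follows essentially the same approach as the paper: both rely on the regular analogue of \cite[Proposition 6.10]{Deng23} to produce the chain $\a_0,\ldots,\a_r=\a^{\sym}$, and the paper's own proof is in fact even terser than yours, simply citing that construction without separately arguing regularity of the intermediate $\a_i$. Your added inductive justification for regularity (via the choice of $k_i$ avoiding existing endpoints) is a reasonable elaboration, though the precise claim that $k_i$ is ``strictly larger than every endpoint currently appearing'' is slightly overstated; what one actually uses is that $k_i-1\notin e(\a_{i-1})$ in the regular situation, which is exactly what prevents a collision when a single segment is extended.
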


\begin{proof}
Recall that  analogues to  \cite[Proposition 6.10]{Deng23}  we know that every regular  multisegment
$\a$ can be obtained as 
\[
 \a=\a_{0}, \a_{1}, \cdots, \a_{r},
\]
where $\a_{r}$ is symmetric, with $\a_{i}\in S(\a_{i})_{k_{i}}$ and $\a_{i-1}=\a_{i}^{(k_{i})}$
for some $k_{i}$.
\end{proof}

\subsection{The parabolic KL polynomials and their interpretations by geometry of graded nilpotent classes}

For fixed $n \in \N$ and a pair of elements in $S_{n}$,  we can associate  
a Kazhdan Lusztig Polynomial $P_{x,y}(q)$. We know also that the coefficients
of such a polynomial are given by the dimensions of the intersection cohomology
of corresponding Schubert varieties in $GL_{n}/B$. 

Similar construction can give rise to a polynomial related to the Poincar\'e series 
of the intersection cohomology of the Schubert varieties in $GL_{n}/P$, where 
$P$ is a standard parabolic subgroup. This has been done in 
Deodhar \cite{D2} for general Coxeter System $(W, S)$. As indicated in 
the same article, when $G=GL_{n}$, due to the existence of
the fibration $G/P\rightarrow G/B$, everything reduces to 
the Borel case.

In this section, 
for certain multisegment $\a$, we shall relate the set $S(\a)$ to 
the orbits in $GL_{n}/P$, where the multiplicities appear to be the corresponding
Parabolic Kazhdan Lusztig Polynomials evaluated at $q=1$.

\begin{notation}
Let $S=\{\sigma_{i}: i=1, \cdots, n-1\}$ be a set of generators for $S_{n}$. 
For $J\subseteq S$, let $S_{J}=<J>$ be the subgroup generated by $J$
and $S_n^{J}=\{w\in S_{n}: ws>w \text{ for all } s\in J\}$. 
\end{notation}

\begin{prop}(cf. \cite{BF} Prop. 2.4.4)\label{prop: 5.2.2}
 We have 
 \begin{description}
  \item [(1)]\(S_{n}=\coprod_{w\in S_n^{J}}wS_{J};\)
  \item [(2)]for $w\in S_n^{J}$ , and $x\in S_{J}$, $\ell(wx)=\ell(w)+\ell(x)$ .
 \end{description}

\end{prop}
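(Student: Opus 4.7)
The plan is to use the Young-subgroup description of $S_J$ inside $S_n$ together with the inversion-number formula for length, which reduces both statements to elementary combinatorics about sorting values within blocks.

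\textbf{Setup.} Write $J=\{\sigma_i:i\in I\}$ for some $I\subseteq\{1,\dots,n-1\}$. The complement of $I$ in $\{1,\dots,n-1\}$ partitions $\{1,\dots,n\}$ into consecutive intervals $B_1,\dots,B_k$ (two consecutive integers $i,i+1$ lie in the same block iff $\sigma_i\in J$), and $S_J$ is the Young subgroup $S_{B_1}\times\cdots\times S_{B_k}$ permuting within each block. Since $ws>w$ for $s=\sigma_i$ is equivalent to $w(i)<w(i+1)$, the set $S_n^J$ consists precisely of those $w$ such that $w|_{B_l}$ is increasing for every $l$.

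\textbf{Proof of (1).} Given $u\in S_n$, I would define $w$ by sorting the values $\{u(j):j\in B_l\}$ in increasing order along each block $B_l$. Then $w|_{B_l}$ is increasing, so $w\in S_n^J$, while $x:=w^{-1}u$ preserves each block setwise, so $x\in S_J$; thus $u=wx$. For uniqueness, if $u=w'x'$ with $w'\in S_n^J$ and $x'\in S_J$, then $w'(B_l)=u(B_l)=w(B_l)$ as unordered sets because $x,x'$ act inside blocks, and both $w|_{B_l}$ and $w'|_{B_l}$ are increasing; this forces $w'=w$ and hence $x'=x$.

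\textbf{Proof of (2).} I would use the inversion formula $\ell(\pi)=\#\{i<j:\pi(i)>\pi(j)\}$ and split the inversions of $wx$ according to whether $i,j$ lie in the same block. For a pair $i<j$ with $i,j\in B_l$, the map $x$ preserves $B_l$, so $(i,j)$ is an inversion of $wx$ iff $w(x(i))>w(x(j))$; since $w|_{B_l}$ is increasing this happens exactly when $x(i)>x(j)$, i.e.\ when $(i,j)$ is an inversion of $x$. For $i\in B_l$, $j\in B_m$ with $l<m$, every such pair satisfies $i<j$ automatically, and the map $(i,j)\mapsto(x(i),x(j))$ is a bijection of $B_l\times B_m$ with itself, so the between-block inversions of $wx$ are in bijection with those of $w$. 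Since $w$ has no within-block inversions, summing the two contributions yields $\ell(wx)=\ell(x)+\ell(w)$.

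The main (and essentially only) obstacle is the careful bookkeeping of inversions in (2); the key input is precisely the defining condition $w\in S_n^J$, which ensures $w$ contributes no within-block inversions and lets the two counts decouple cleanly. Everything else is direct from the Young-subgroup identification.
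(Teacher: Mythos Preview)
Your proof is correct. The setup identifying $S_J$ with the Young subgroup and $S_n^J$ with the block-increasing permutations is accurate, the existence/uniqueness argument in (1) is clean, and the inversion-count decomposition in (2) is carried out carefully; in particular you correctly use that $x\in S_J$ has only within-block inversions and that $w\in S_n^J$ has only between-block inversions, so the two contributions add without overlap.

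As for comparison: the paper does not supply its own proof of this proposition; it is stated with a citation to \cite{BF}, Proposition~2.4.4, where it is proved for arbitrary Coxeter systems via the exchange condition and induction on length. Your argument is specific to $S_n$ but more elementary, replacing the Coxeter-theoretic machinery with the concrete combinatorics of the inversion formula. Both are standard; yours has the advantage of being entirely self-contained for the type-$A$ setting relevant to this paper.
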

\remk Now we can identify $S_n^{J}$ with $S_{n}/S_{J}$, hence it is 
in bijection with the Borel orbits in $GL_{n}/P$, where $P$ is the 
parabolic subgroup determined 
by $J$.

\begin{notation}
 Let $\a_{\Id}^J=\{\Delta_{1}, \cdots, \Delta_{n}\}$ such that 
 \[
  e(\Delta_{1})<\cdots< e(\Delta_{n}), 
 \]
and 
\[
  b(\Delta_{1})\leq \cdots\leq b(\Delta_{n}),
\]
such that 
\[
 b(\Delta_{i})=b(\Delta_{i+1}) \text{ if and only if } \sigma_{i}\in J
\]
and $b(\Delta_{n})\leq e(\Delta_{1})$.
\end{notation}
\begin{example}\label{ex: 6.2.4}
Let $n=4$, and  $J=\{\sigma_1, \sigma_3\}$, then we can choose  
\[
 \a_{\Id}^{J}=[1, 3]+[1, 4]+[2, 5]+[2, 6].
\] 
\end{example}

\begin{definition}\label{def: 6.2.5}
We call a multisegment $\a\in S(\a_{\Id}^J)$ a multisegment of parabolic type $J$.
\end{definition}

\begin{prop}\label{prop: 6.2.5}
 For $w\in S_n^{J}$, let $\a^J_{w}=\sum [b(\Delta_{i}), e(\Delta_{w(i)}) ]$, then $\a_{w}^J\in S(\a_{\Id}^J)$.
\end{prop}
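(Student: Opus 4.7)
The plan is to prove by induction on the Bruhat length the slightly stronger claim that $\a_v^J \leq \a_\Id^J$ for \emph{every} $v \in S_n$, where $\a_v^J := \sum_{i=1}^n [b(\Delta_i), e(\Delta_{v(i)})]$ is defined by the same formula for arbitrary $v$. Specializing to $v = w \in S_n^J$ will then give the proposition. The base case $v = \Id$ is immediate. For the inductive step, pick any right descent $\sigma_i$ of $v$ and set $v' = v\sigma_i$, so that $\ell(v') = \ell(v) - 1$ and $v'(i) < v'(i+1)$.

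The multisegments $\a_v^J$ and $\a_{v'}^J$ coincide outside positions $i$ and $i+1$: at those positions $\a_{v'}^J$ contains $\Delta := [b(\Delta_i), e(\Delta_{v'(i)})]$ and $\Delta' := [b(\Delta_{i+1}), e(\Delta_{v'(i+1)})]$, while $\a_v^J$ contains the swapped pair $[b(\Delta_i), e(\Delta_{v'(i+1)})]$ and $[b(\Delta_{i+1}), e(\Delta_{v'(i)})]$. The key check is that $\Delta$ and $\Delta'$ are linked in Zelevinsky's sense: the strict monotonicity of the $e(\Delta_j)$ combined with $v'(i) < v'(i+1)$ forces $e(\Delta_{v'(i)}) < e(\Delta_{v'(i+1)})$, and the defining hypotheses of $\a_\Id^J$ yield the chain $b(\Delta_{i+1}) \leq b(\Delta_n) \leq e(\Delta_1) \leq e(\Delta_{v'(i)})$, using $v'(i) \geq 1$. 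Hence $[b(\Delta_i), e(\Delta_{v'(i+1)})] = \Delta \cup \Delta'$ and the nonempty $[b(\Delta_{i+1}), e(\Delta_{v'(i)})] = \Delta \cap \Delta'$ are precisely the union and intersection of the linked pair, so the passage from $\a_{v'}^J$ to $\a_v^J$ is exactly one Zelevinsky elementary operation. By \cite[Definition 2.12]{Deng23} this gives $\a_v^J \leq \a_{v'}^J$; combined with the inductive hypothesis $\a_{v'}^J \leq \a_\Id^J$ one concludes $\a_v^J \leq \a_\Id^J$.

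The main obstacle is the verification of the linked condition; this is exactly where the hypothesis $b(\Delta_n) \leq e(\Delta_1)$ in the definition of $\a_\Id^J$ is essential, since without it the intersection $\Delta \cap \Delta'$ could be empty and the elementary operation would either fail or alter the number of segments. As a sanity check, the assignment $v \mapsto \a_v^J$ descends to the coset space $S_n/S_J \simeq S_n^J$: for $\sigma_j \in J$ one has $b(\Delta_j) = b(\Delta_{j+1})$, so swapping $v(j)$ and $v(j+1)$ leaves the multiset of segments unchanged. A pleasant byproduct of this argument is that any reduced expression $w = \sigma_{i_1}\cdots \sigma_{i_{\ell(w)}}$ produces an explicit descending chain of $\ell(w)$ elementary Zelevinsky operations from $\a_\Id^J$ down to $\a_w^J$, a correspondence one expects to be reused in the subsequent comparison of $S(\a_\Id^J)$ with Bruhat intervals in $S_n^J$.
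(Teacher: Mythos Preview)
Your argument is correct and takes a genuinely different, more elementary route than the paper. The paper proceeds by induction on $|J|$: it peels off the smallest generator $\sigma_{i_0}\in J$, constructs an auxiliary multisegment $\a_1=\a_{\Id}^{J_1}$ with $J_1=J\setminus\{\sigma_{i_0}\}$ by shifting the beginnings of the first $i_0$ segments one step to the left, and then uses the left-truncation maps ${}_{k}\psi$ of \cite[\S 6]{Deng23} together with Lemma~\ref{lem: 6.2.7} to transport the inductive hypothesis from $S(\a_{\Id}^{J_1})$ back to $S(\a_{\Id}^J)$. You instead induct on $\ell(v)$ and exhibit directly that each right-descent step is a single Zelevinsky elementary operation. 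One small imprecision: when the chosen right descent $\sigma_i$ lies in $J$ you have $b(\Delta_i)=b(\Delta_{i+1})$, so $\Delta\subsetneq\Delta'$ and the pair is \emph{nested} rather than linked; no elementary operation takes place, but as your own ``sanity check'' observes one then has $\a_v^J=\a_{v'}^J$, so the induction still goes through. Your approach is cleaner for the bare statement and produces the explicit descending chain you advertise; the paper's more elaborate reduction is chosen because the same truncation machinery is reused immediately afterwards to establish the bijectivity of $\Phi_J$ (Proposition~\ref{prop: 6.2.10}) and the parabolic Kazhdan--Lusztig identity (Theorem~\ref{teo-main-regular}).
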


\begin{example}
Let $\a_{\Id}^J$ as in Example \ref{ex: 6.2.4}. For $w=\sigma_1\sigma_2$, then 
\[
 \a_w^J=[1, 4]+[1, 5]+[2, 3]+[2, 6].
\]

\end{example}

\begin{proof}[of Proposition \ref{prop: 6.2.5}]
 We proceed by induction on $|J|$. If $|J|=0$, we are 
 in the symmetric case, so we are done by \cite[Proposition 4.7]{Deng23}.
 And in general, let $J=J_{1}\cup \{\sigma_{i_{0}}\}$ with 
 $i_{0}= \min\{i: \sigma_{i}\in J\}$ and 
 $i_{1}=\max\{i: b(\Delta_{i})=b(\Delta_{i_{0}})\}$.
 
 Let $\a_{1}= \{\Delta_{1}^{1}, \cdots, \Delta_{n}^{1}\}$, such that 
\begin{align*}
 \Delta_{i}^{1}&=^{+}(\Delta_{i}), \text{ for } i\leq i_{0}, \\
 \Delta_{i}^{1}&=\Delta_{i}, \text{ otherwise. }( \text{ cf. \cite[Definition 5.1]{Deng23}}) .
\end{align*}
See also the Example \ref{exm-construct-a1} after the proof.
Let $\a_{\Id}^{J_{1}}=\a_{1}$ with
\begin{displaymath}
 b(\Delta_i^1)=
 \left\{\begin{array}{cc}
 b(\Delta_i)-1, &\text{ for }i\leq i_{0},\\
b(\Delta_i), &\text{ for }i> i_0.
\end{array}\right.
\end{displaymath}

Then we  have
\[
 \a_{\Id}^J={^{(b(\Delta_{1}^1), \cdots, b(\Delta_{i_{0}}^1))}\a_{1}}.
\] 
Let $w_1=(i_1, \cdots, i_0+1, i_0)$, then $w_1\in S_n^{J_1}$.
Note that we have also $ww_{1}\in S_n^{J_{1}}$, since 
\[
 ww_{1}(i)=w(i-1)<ww_{1}(i+1)=w(i), \text{ for }i=i_{0}+1, \cdots, i_{1}-1.
\]

Then by induction, we know that 
\[
 \a_{ww_1}^{J_1}=\sum_{i}[b(\Delta_{i}^1), e(\Delta_{ww_1(i)}^1)]\in S(\a_{1}).
\]
An example (cf. Example \ref{exm-construction-a-Id-J}) is given after the proof.
Moreover,
\[
 \a_{w}^{J}={^{(b(\Delta_{1}^1), \cdots, b(\Delta_{i_{0}}^1))}\a_{ww_1}^{J_{1}}}.
\]
The result, that is the fact $\a_w^J\in S(\a_{\Id}^J)$ follows from the next lemma.
\end{proof}

\begin{example}\label{exm-construct-a1}
Let $\a_{\Id}^J$ be a multisegment as in Example \ref{ex: 6.2.4}. 
Then 
\[
 \a_1=[0, 3]+[1, 4]+[2, 5]+[2, 6].
\]
\end{example}

\begin{example}\label{exm-construction-a-Id-J}
 Let $\a_{\Id}^J$ as in the previous example. Then $i_1=2$, and $J_1=\{\sigma_3\}$.
 In this case, we have $w_1=\sigma_1$ and $ww_1=\sigma_1\sigma_2\sigma_1$, with 
 \[
  \a_{ww_1}^{J_1}=[0, 5]+[1, 4]+[2, 3]+[3, 6].
 \]
\end{example}

\begin{lemma}\label{lem: 6.2.7}
We have 
\[
 \a_{ww_1}^{J_{1}}\in {_{b(\Delta_{1}^1), \cdots, b(\Delta_{i_{0}}^1)}S(\a_{1})}.
\] 
\end{lemma}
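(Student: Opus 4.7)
The plan is to unpack the defining property of the sub-poset ${}_{b(\Delta_1^1),\dots,b(\Delta_{i_0}^1)}S(\a_1)$ and then verify it for $\a_{ww_1}^{J_1}$ by direct computation. Based on the notation used earlier in the paper and in \cite[Definition 5.1]{Deng23}, this sub-poset should consist of those $\c\in S(\a_1)$ on which the inverse shift operation ${}^{(b(\Delta_1^1),\dots,b(\Delta_{i_0}^1))}(\cdot)$ is well-defined, i.e.\ multisegments containing $i_0$ distinguished segments with the prescribed beginnings, arranged compatibly with the construction of $\a_{\Id}^J$ from $\a_1$.

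First, I would write out $\a_{ww_1}^{J_1}=\sum_{i=1}^n[b(\Delta_i^1),e(\Delta_{ww_1(i)}^1)]$ explicitly. Since $w_1$ is the cyclic permutation $(i_1,i_1-1,\dots,i_0+1,i_0)$ which acts nontrivially only on $\{i_0,\dots,i_1\}$, the beginnings of $\a_{ww_1}^{J_1}$ are inherited unchanged from $\a_1$, so the $i_0$ required distinguished segments, namely those beginning at $b(\Delta_1^1),\dots,b(\Delta_{i_0}^1)$, are present by construction.

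Second, I would verify that the reverse shift produces $\a_w^J$: raising the beginnings of these $i_0$ segments back up by one and leaving the rest unchanged should give a multisegment lying in $S(\a_{\Id}^J)$. This identity was already asserted in the proof of Proposition \ref{prop: 6.2.5} through the relation $\a_w^J={}^{(b(\Delta_1^1),\dots,b(\Delta_{i_0}^1))}\a_{ww_1}^{J_1}$, so once the shift is known to be well-defined, membership follows.

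The main obstacle is the well-definedness of the shift, which reduces to a minimality check: the $i_0$ segments of $\a_{ww_1}^{J_1}$ starting at $b(\Delta_1^1),\dots,b(\Delta_{i_0}^1)$ must be unambiguously selected among possibly several segments sharing a beginning. The condition $ww_1\in S_n^{J_1}$ together with the specific choice $i_1=\max\{i:b(\Delta_i)=b(\Delta_{i_0})\}$ is precisely what forces the correct relative order among segments that share a beginning, so the compatibility check amounts to tracing this ordering through the cyclic structure of $w_1$. Once this is done, both the well-definedness of the shift and the identification of the resulting multisegment with $\a_w^J$ follow immediately, giving $\a_{ww_1}^{J_1}\in{}_{b(\Delta_1^1),\dots,b(\Delta_{i_0}^1)}S(\a_1)$.
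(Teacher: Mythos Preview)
Your proposal is an outline rather than a proof: you correctly identify that the crux is the well-definedness of the iterated shift, but you do not actually verify it. Saying that ``the compatibility check amounts to tracing this ordering through the cyclic structure of $w_1$'' and that ``once this is done'' the result follows is not a proof; it is a restatement of what must be proved.

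The paper's argument supplies exactly the content you are missing. It does not attempt a single global check but instead peels off one shift at a time: one defines intermediate multisegments $\a_{1,j}$ and $\b_j$ for $0\le j\le i_0$ with $\b_{i_0}=\a_{ww_1}^{J_1}$ and $\b_j={}^{(b(\Delta_{j+1}^1),\dots,b(\Delta_{i_0}^1))}\a_{ww_1}^{J_1}$, and then proves by (descending) induction that $\b_j\in{}_{b(\Delta_j^1)}S(\a_{1,j})$. The base case $j=i_0$ is the substantive one and requires the explicit inequality chain
\[
ww_1(i_0)>ww_1(i_1)>ww_1(i_1-1)>\cdots>ww_1(i_0+1),
\]
which holds because $w\in S^J$; this yields the corresponding ordering on ends and hence the hypothesis $({}_{b(\Delta_{i_0}^1)}H(\a_{1,i_0}))$. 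The inductive step for $j<i_0$ uses the strict inequality $b(\Delta_j^1)<b(\Delta_{j+1}^1)$, which is exactly the minimality of $i_0$. Your sketch alludes to both ingredients (the role of $w\in S^J$ and of the choice of $i_0$) but does not perform either computation. To turn your proposal into a proof, you must either carry out these verifications directly or set up the step-by-step induction as above.
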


\begin{proof}
In fact, let $\a_{1, 0}=\a_{\Id}^J$ and 
 for $j\leq i_{0}$, 
$\a_{1, j}= \{\Delta_{1, j}, \cdots, \Delta_{n,j}\}$, such that 
\begin{align*}
 \Delta_{i, j}&=^{+}(\Delta_{i}), \text{ for } i\leq j, \\
 \Delta_{i, j}&=\Delta_{i}, \text{ otherwise. }
\end{align*}
Then we have  
$\a_{1,j}={^{(b(\Delta_{j+1}^1), \cdots, b(\Delta_{i_{0}}^1))}\a_{1}}$,
for $j=0, 1, \cdots, i_{0}$.
For $ j<i_0-1$, let
\[
 \b_{j}=\sum_{j< i\leq i_{0}} [b(\Delta_{i}^{1})+1, e(\Delta_{w(i)}^{1}) ]+
 \sum_{i> i_{0},\text{ or } i\leq j} [b(\Delta_{i}^{1}), e(\Delta_{w(i)}^{1})],
\]
and $\b_{i_0}=\a_{ww_1}^{J_1}$
so that $\b_{j}={^{(b(\Delta_{j+1}^1), \cdots, b(\Delta_{i_{0}}^1))}\a_2}$.
We show that  $\b_{j}\in {_{b(\Delta_{j}^1)}S(\a_{1, j})}$ by induction on $j$. 
\begin{description}
 \item[(1)] For $j=i_{0}$,   we have 
 $$b(\Delta^{1}_{i_{0}})=b(\Delta^{1}_{i_{0}+1})-1=\cdots =b(\Delta^{1}_{i_{1}-1})-1=b(\Delta^1_{i_1})-1.$$
And $ww_1(i_0)>ww_1(i_1)>ww_{1}(i_1-1)>\cdots >ww_1(i_{0}+1)$, hence
 $$e(\Delta^{1}_{ww_1(i_{0})})>e(\Delta_{ww_1(i_1)}^1)>e(\Delta_{ww_1(i_1-1)}^1)>\cdots >e(\Delta_{ww_1(i_0+1)}^1),$$
 because $w\in S^{J}$.
This
implies  that $\b_{i_0}$ satisfies 
the hypothesis  $(_{b(\Delta^{1}_{i_{0}})}H(\a_{1,i_0}))$.
\item[(2)] For general $j\leq i_{0}-1$, 
By induction, we may assume that $\b_{j+1}\in {_{b(\Delta_{j+1}^1)}S(\a_{1, j+1})}$. 
Now to show  $\b_{j}\in {_{b(\Delta_{j}^1)}S(\a_{1, j})}$ , 
 we know that
$b(\Delta_{j}^{1})+1<b(\Delta_{j+1}^{1})+1$ in $\b_{j+1}$
(we have inequality by assumption on $i_{0}$), which proves that $\b_{j}\in {_{b(\Delta_{j}^1)}S(\a_{1, j})}$ .
 Hence we are done.
\end{description} 

\end{proof}

\begin{lemma}\label{lem: 6.2.8}
Let $J=\{\sigma_{i_{0}}\}\cup J_1$ such that 
$i_0=\min\{i: \sigma_{i}\in J\}$. Let
$i_1\in \Z$ be the maximal integer satisfying for $i_0\leq i<i_1$ 
we have $\sigma_{i}\in J$.
Then 
\[
 S_{J}^{J_1}=\{w_i: i=1, \cdots, i_1-i_0+1\}
\]
with 
\[
 w_i=(i_1-i+1, \cdots, i_0+1, i_0)\in S_J.
\]
As a consequence, we have 
\[
 S_n^{J_1}=\coprod_{i}S_n^Jw_i.
\]

\end{lemma}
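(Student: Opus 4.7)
My plan is to first identify the set $S_J^{J_1}$ explicitly by reducing to a symmetric group on a consecutive block of indices, and then to deduce the coset decomposition of $S_n^{J_1}$ via the standard transitivity of minimal length representatives in a tower of parabolic subgroups of $S_n$.

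The first step is to unpack the hypotheses: by minimality of $i_0$ in $J$ and maximality of $i_1$, neither $\sigma_{i_0-1}$ nor $\sigma_{i_1}$ lies in $J$. Consequently every $\sigma_k$ in $J' := J \setminus \{\sigma_{i_0}, \ldots, \sigma_{i_1-1}\}$ satisfies $|k-j| \geq 2$ for all $j \in \{i_0, \ldots, i_1-1\}$, so the subgroups $S_{\{i_0,\ldots, i_1\}}$ (generated by $\sigma_{i_0}, \ldots, \sigma_{i_1-1}$) and $S_{J'}$ commute elementwise. Hence $S_J = S_{\{i_0, \ldots, i_1\}} \times S_{J'}$; removing the single generator $\sigma_{i_0}$ yields the analogous product $S_{J_1} = S_{\{i_0+1, \ldots, i_1\}} \times S_{J'}$ with the second factor unchanged. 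Therefore the minimal coset representatives for $S_J/S_{J_1}$ coincide with those for $S_{\{i_0, \ldots, i_1\}}/S_{\{i_0+1, \ldots, i_1\}}$.

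The second step describes these representatives explicitly. Since $S_{\{i_0+1, \ldots, i_1\}}$ is the stabilizer of $i_0$ inside $S_{\{i_0, \ldots, i_1\}}$, a minimal representative $w$ is determined by the value $k := w(i_0) \in \{i_0, \ldots, i_1\}$ together with the requirement $w(i_0+1) < \cdots < w(i_1)$. A direct verification shows that the unique such $w$ is the cycle $(k, k-1, \ldots, i_0+1, i_0)$, which admits the reduced expression $\sigma_{i_0}\sigma_{i_0+1}\cdots\sigma_{k-1}$ (whose factors all lie in $J$, confirming $w \in S_J$). Parametrizing by $k = i_1 - i + 1$ for $i \in \{1, \ldots, i_1 - i_0 + 1\}$ recovers precisely the $w_i$ of the statement, with $w_{i_1-i_0+1}$ being the identity.

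For the final claim I would invoke the standard transitivity of minimal coset representatives: every $w \in S_n^{J_1}$ admits a unique factorization $w = uv$ with $u \in S_n^J$, $v \in S_J^{J_1}$, and $\ell(w) = \ell(u) + \ell(v)$. Indeed, Proposition \ref{prop: 5.2.2} applied to $J$ yields a unique length-additive decomposition $w = uv$ with $u \in S_n^J$ and $v \in S_J$. For any $s \in J_1 \subseteq J$, if $vs < v$ then Proposition \ref{prop: 5.2.2}(2) forces $\ell(uvs) \leq \ell(u) + \ell(vs) = \ell(w) - 1$, contradicting $ws > w$; hence $v \in S_J^{J_1}$. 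The converse inclusion $S_n^J \cdot S_J^{J_1} \subseteq S_n^{J_1}$ follows from the same additivity of lengths, and uniqueness of the factorization yields the disjointness of the union $\coprod_i S_n^J w_i$. The main delicate point is this length-additivity argument, which pins down that the factorization produced by Proposition \ref{prop: 5.2.2} automatically lies in $S_n^J \cdot S_J^{J_1}$; everything else is a structural reduction to the symmetric group on a consecutive block of indices.
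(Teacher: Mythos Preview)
Your proof is correct and follows essentially the same strategy as the paper: both exploit the direct product decomposition $S_J \simeq S_{\{\sigma_{i_0},\ldots,\sigma_{i_1-1}\}} \times S_{J'}$ (the paper writes this as $S_{J_0}\times S_{J\setminus J_0}$) to reduce the identification of $S_J^{J_1}$ to a single symmetric-group block, and then deduce the decomposition of $S_n^{J_1}$ from that of $S_J$. Your final step, showing via length additivity that the $S_J$-component of any $w\in S_n^{J_1}$ must already lie in $S_J^{J_1}$, is more explicit than the paper's, which just expands $S_n=\coprod_{v\in S_n^J} vS_J$ using $S_J=\coprod_j w_jS_{J_1}$; one cosmetic slip is that the reduced word for the cycle $(k,\ldots,i_0)$ should be $\sigma_{k-1}\cdots\sigma_{i_0+1}\sigma_{i_0}$ rather than $\sigma_{i_0}\sigma_{i_0+1}\cdots\sigma_{k-1}$, but this does not affect your argument.
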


\begin{proof}
By Proposition \ref{prop: 5.2.2}, we only need to show that
$S_J=\coprod_{j}w_jS_{J_1}$ and $w_j\in S^{J_1}$. 
The fact that $w_j\in S^{J_1}$ follows from 
\[
 w_j(i)=i-1, \text{ for }i=i_0+1, \cdots, i_1-j+1 , ~w_j(i_0)=i_1-j+1,
\]
and $w_j(i)=i$ for $i\notin \{i_0, \cdots, i_1-j+1\}$.
Finally, to see that $S_J=\coprod_{j}w_jS_{J_1}$, we compare the cadinalities.
Let $J_0=\{\sigma_{i}: i=i_0 \cdots, i_1-1 \}$, then
\[
 S_J\simeq S_{J_0}\times S_{J\setminus J_0},
\]
\[
 S_{J_1}\simeq S_{J_0\setminus \{\sigma_{i_0, i_0+1}\}} \times S_{J\setminus J_0}.
\]
Hence $\sharp{S_J}/\sharp{S_{J_1}}= \frac{\sharp{S_{J_0}}}{\sharp{S_{J_0\setminus \{\sigma_{i_0, i_0+1}\}}}}
=(i_1-i_0+1)!/(i_1-i_0)!=i_1-i_0+1$.
Finally, 
by Proposition \ref{prop: 5.2.2}, we know that 
\[
 S_n=\coprod_{v\in S_n^J}vS_J=\coprod_{j=i_0}^{i_1-i_0+1}\coprod_{v\in S_n^J}vw_{j}S_{J_1}=\coprod_{j}S_n^Jw_jS_{J_1}.
\]

\end{proof}

\begin{lemma}\label{lem: 6.2.9}
We keep the notations from Proposition \ref{prop: 6.2.5}. For $i=1, \cdots, i_1-i_0+1$, we have 
\[
 \a_w^J={^{(b(\Delta_{1}^1), \cdots, b(\Delta_{i_{0}}^1))}\a_{ww_i}^{J_{1}}}.
\] 
\end{lemma}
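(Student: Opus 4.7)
My plan is to verify the identity as multisegments by unfolding the definition of $\a_{ww_i}^{J_1}$, making explicit the action of the shift operation ${^{(b(\Delta_{1}^1), \ldots, b(\Delta_{i_{0}}^1))}}$, and then comparing with $\a_w^J$ index-by-index. The central combinatorial fact to exploit is that, by Lemma \ref{lem: 6.2.8}, $w_i$ stabilizes setwise the block $\{i_0, \ldots, i_1\}$, which is exactly the range over which $\a_{\Id}^J$ has segments sharing the common beginning $b(\Delta_{i_0})$. Since the multisegments $\a_{ww_i}^{J_1}$ for different $i$ are genuinely distinct, the content of the lemma is that the shift ${^{(\cdots)}}$ absorbs precisely the $S_{J_1}$-ambiguity coming from the choice of coset representative $w_i$.

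The first step is to unfold
\[
 \a_{ww_i}^{J_1} = \sum_j [b(\Delta_j^1), e(\Delta_{ww_i(j)}^1)],
\]
using $e(\Delta_j^1) = e(\Delta_j)$ for all $j$ and $b(\Delta_j^1) = b(\Delta_j) - 1$ for $j \leq i_0$ (unchanged otherwise). The shift operation increments by one the beginnings of exactly the first $i_0$ segments; to confirm that no extraneous segments are affected, one checks that no $j > i_0$ has beginning coinciding with any $b(\Delta_k^1)$ for $k \leq i_0$, which follows from the minimality of $i_0$ (giving $b(\Delta_1) < \cdots < b(\Delta_{i_0})$, and hence $b(\Delta_k^1) \leq b(\Delta_{i_0}) - 1 < b(\Delta_j)$ for $j > i_0$). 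This converts $\a_{ww_i}^{J_1}$ into $\sum_j [b(\Delta_j), e(\Delta_{ww_i(j)})]$, and the task reduces to matching this with $\a_w^J = \sum_j [b(\Delta_j), e(\Delta_{w(j)})]$ as multisegments.

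The second step is to split the index set into $j \notin \{i_0, \ldots, i_1\}$, where $w_i$ fixes $j$ and hence the corresponding segments agree on the nose, and $j \in \{i_0, \ldots, i_1\}$, where all beginnings equal $b(\Delta_{i_0})$ (because $\sigma_{i_0}, \ldots, \sigma_{i_1-1} \in J$). In the second range only the multiset of ends matters, and since $w_i$ cyclically permutes the prefix $\{i_0, \ldots, i_1 - i + 1\}$ and fixes $\{i_1 - i + 2, \ldots, i_1\}$ pointwise, it stabilizes $\{i_0, \ldots, i_1\}$ setwise. Consequently $ww_i(\{i_0, \ldots, i_1\}) = w(\{i_0, \ldots, i_1\})$, and the multisets of ends over this block coincide.

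The main (fairly modest) obstacle is the bookkeeping around the shift operation ${^{(\cdots)}}$: one must be sure that it touches exactly the $i_0$ intended segments and no others, which is the content of the elementary inequality noted above. An inductive alternative, starting from the $i=1$ case already established inside the proof of Proposition \ref{prop: 6.2.5} and passing from $ww_i$ to $ww_{i+1}$ by a simple transposition in $S_{J_1}$, is also available, but the direct approach above seems cleaner and uniform in $i$.
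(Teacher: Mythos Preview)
Your proof is correct and follows essentially the same route as the paper's own argument: unfold $\a_{ww_i}^{J_1}$, use $e(\Delta_j^1)=e(\Delta_j)$ and $b(\Delta_j^1)=b(\Delta_j)-1$ for $j\le i_0$ to see that the shift recovers $\sum_j[b(\Delta_j),e(\Delta_{ww_i(j)})]$, and then observe that $w_i$ permutes only indices in the block where all beginnings coincide, so the two multisegments agree. Your treatment is in fact slightly more careful than the paper's, since you explicitly verify (via the minimality of $i_0$) that the shift touches exactly the first $i_0$ segments and no others, a point the paper leaves implicit.
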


\begin{proof}
Note that by definition
we have 
\[
 \a_{ww_j}^{J_1}=\sum_{i}b(\Delta_{i}^1), e(\Delta_{ww_j(i)}^1)].
\]
As noted before, we have 
\[
 b(\Delta_i^1)=b(\Delta_i)-1, \text{ for }i\leq i_{0}, ~b(\Delta_i^1)=b(\Delta_i), \text{ for }i> i_0.
\]
Also,we observe that 
$e(\Delta_{i}^1)=e(\Delta_i)$.
Hence 
\[
 {^{(b(\Delta_{1}^1), \cdots, b(\Delta_{i_{0}}^1))}\a_{ww_i}^{J_{1}}}=
 \sum_{i}b(\Delta_{i}), e(\Delta_{ww_j(i)})].
\]
It remains to see that we have 
\[
 \sum_{i=i_0}^{i_1-j+1}b(\Delta_{i}), e(\Delta_{ww_j(i)})]=\sum_{i_0}^{i_1-j+1}b(\Delta_{i}), e(\Delta_{w(i)})]
\]
since $b(\Delta_{i_0})=\cdots=b(\Delta_{i_1-j+1})$.
Hence we have 
\[
  \a_w^J={^{(b(\Delta_{1}^1), \cdots, b(\Delta_{i_{0}}^1))}\a_{ww_i}^{J_{1}}}.
\] 
\end{proof}

\begin{definition}
As in the symmetric cases, we have the following map 
\begin{align*}
 \Phi_{J}: S_n^{J}&\rightarrow S(\a_{\Id}^J)\\
 w&\mapsto \a_{w}^J.
\end{align*}
\end{definition}

\begin{prop}\label{prop: 6.2.10}
The morphism $\Phi_{J}$ is bijective and translate the inverse Bruhat order
on $S_n^{J}$ to the order on $S(\a_{\Id}^J)$.
\end{prop}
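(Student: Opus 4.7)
The plan is to prove both assertions jointly by induction on $|J|$. The base case $|J|=0$ is exactly the symmetric bijection $\Phi:S_n\to S(\a_{\Id})$ established in \cite[Proposition 4.7]{Deng23}. For the inductive step, decompose $J=\{\sigma_{i_0}\}\cup J_1$ as in the proof of Proposition \ref{prop: 6.2.5}, so that $|J_1|=|J|-1$ and the inductive hypothesis applies. The three tools that make the reduction possible are: the disjoint decomposition $S_n^{J_1}=\coprod_{i=1}^{i_1-i_0+1}S_n^J w_i$ of Lemma \ref{lem: 6.2.8}; the additive length formula $\ell(ww_i)=\ell(w)+\ell(w_i)$ of Proposition \ref{prop: 5.2.2}(2); and the identification $\a_w^J={}^{(b(\Delta_1^1),\ldots,b(\Delta_{i_0}^1))}\a_{ww_i}^{J_1}$ of Lemma \ref{lem: 6.2.9}, valid for every $i$.

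For bijectivity I fix the distinguished choice $i=i_1-i_0+1$, for which $w_i$ is the identity; then $\a_w^J={}^{(\cdots)}\a_w^{J_1}$, so injectivity of $\Phi_J$ reduces at once to the injectivity of $\Phi_{J_1}$ together with the injectivity of the shift operation on the relevant sub-poset (which is built into Lemma \ref{lem: 6.2.7}). Surjectivity proceeds as follows: given any $\b\in S(\a_{\Id}^J)$, Lemma \ref{lem: 6.2.7} lifts $\b$ through the shift to some $\b'\in S(\a_{\Id}^{J_1})$ with ${}^{(\cdots)}\b'=\b$; the induction hypothesis writes $\b'=\a_v^{J_1}$ for a unique $v\in S_n^{J_1}$; then $v=ww_i$ for unique $w\in S_n^J$ by Lemma \ref{lem: 6.2.8}, and a further application of Lemma \ref{lem: 6.2.9} gives $\a_w^J=\b$.

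For the order translation I combine two monotonicity statements. On the symmetric-group side, the additive length formula guarantees that the inclusion $S_n^J\hookrightarrow S_n^{J_1}$ by right multiplication with a fixed $w_i$ preserves and reflects the Bruhat order, which is the standard behaviour of minimal-length coset representatives. On the multisegment side, the shift operation ${}^{(\cdots)}$ restricts to an order-preserving bijection between the relevant sub-posets, following the analysis of \cite[\S 5]{Deng23} underlying Lemma \ref{lem: 6.2.7}. Composing these with the inductive order-translation provided by $\Phi_{J_1}$ yields the desired statement for $\Phi_J$.

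The main obstacle is the second monotonicity claim: one has to verify that the shift operation interacts correctly with the multisegment partial order, so that its image inherits the induced order without distortion. This reduces, via the inductive description of $S(\a_{\Id}^{J_1})$ given in Lemma \ref{lem: 6.2.7}, to checking that elementary order-raising operations on the $\a_{\Id}^{J_1}$-side correspond to elementary order-raising operations on the $\a_{\Id}^J$-side under the shift — a combinatorial case analysis that is tractable once the dictionary of Lemma \ref{lem: 6.2.9} is in hand.
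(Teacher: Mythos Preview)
Your outline follows the paper's proof almost exactly: the same induction on $|J|$, the same decomposition $J=J_1\cup\{\sigma_{i_0}\}$, and the same triad of auxiliary results (Lemma \ref{lem: 6.2.7}, Lemma \ref{lem: 6.2.8}, Lemma \ref{lem: 6.2.9}). Your surjectivity argument is identical to the paper's, and the order argument is in the same spirit.

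There is, however, one concrete misstep. For injectivity and the order comparison you fix the coset representative $w_{i_1-i_0+1}=\Id$, whereas the paper fixes $w_1$, the \emph{longest} element among the $w_i$'s. This matters: Lemma \ref{lem: 6.2.7} asserts precisely that $\a_{ww_1}^{J_1}$ (and not $\a_{w}^{J_1}$) lies in the sub-poset ${}_{b(\Delta_1^1),\ldots,b(\Delta_{i_0}^1)}S(\a_1)$ on which the truncation map ${}_{b(\Delta_1^1),\ldots,b(\Delta_{i_0}^1)}\psi$ is defined and known to be an injective, order-preserving map (this last property coming from \cite{Deng23}, not from Lemma \ref{lem: 6.2.7} itself, which only gives membership). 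With your choice $w_i=\Id$ there is no guarantee that $\a_w^{J_1}$ lands in that sub-poset, so you cannot directly invoke the injectivity or monotonicity of $\psi$. The fix is easy: use $w_1$ throughout, exactly as the paper does; then the factorisation $\Phi_J = {}_{(\cdots)}\psi \circ \Phi_{J_1}\circ(w\mapsto ww_1)$ goes through cleanly, and both injectivity and the order translation follow from the corresponding properties of the three factors, without the additional ``combinatorial case analysis'' you anticipate in your final paragraph.
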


\begin{proof}
Again, we do this by induction on $|J|$. If $|J|=0$, 
we are in the symmetric case, so everything is in \cite[Proposition 4.7]{Deng23}.
In general, we keep the notations in the Proposition \ref{prop: 6.2.5}.
We have $J=J_{1}\cup \{\sigma_{i_{0}}\}$.
By Lemma \ref{lem: 6.2.7}, 
\[
 \a_{ww_1}^{J_{1}}\in {_{b(\Delta_{1}^1), \cdots, b(\Delta_{i_{0}}^1)}S(\a_{1})}.
\]

By Lemma  \ref{lem: 6.2.9} and Proposition \ref{prop: 6.2.5} the morphism 
${_{b(\Delta_{1}^1), \cdots, b(\Delta_{i_{0}}^1)}\psi}$ (cf. \cite[Definition 6.6]{Deng23})
sends $\Phi_{J_{1}}(ww_1)$ to 
$\Phi_{J}(w)$ for $w\in J$. 
Therefore 
\[
 \Phi_J={_{b(\Delta_{1}^1), \cdots, b(\Delta_{i_{0}}^1)}\psi}\circ \Phi_{J_1},
\]
and the injectivity of follows from that of ${_{b(\Delta_{1}^1), \cdots, b(\Delta_{i_{0}}^1)}\psi}$
and induction on $J_1$.
For surjectivity,
let $\b\in S(\a_{\Id}^J)$. By the analogues of \cite[Proposition 6.5]{Deng23}, we have the surjectivity of the map 
\[
 {_{b(\Delta_{1}^1), \cdots, b(\Delta_{i_{0}}^1)}\psi}: 
 {_{b(\Delta_{1}^1), \cdots, b(\Delta_{i_{0}}^1)}S(\a_{1})}\rightarrow S(\a_{\Id}^J).
\]
Hence we know that there exists a $w'\in S^{J_1}$, such that 
$\Phi_{J_1}(w')\in  {_{b(\Delta_{1}^1), \cdots, b(\Delta_{i_{0}}^1)}S(\a_{1})}$, and 
is sent to $\b$ by $ {_{b(\Delta_{1}^1), \cdots, b(\Delta_{i_{0}}^1)}\psi}$.
By lemma \ref{lem: 6.2.8},  every $w'\in S^{J_{1}}$ can be write as 
$ww_j$ for some $w\in S^J$ and $w_j\in S^{J_1}$. Now by lemma \ref{lem: 6.2.9},
\[
 \b=\a_{w}^J.
\]
Note that for $w>w'$ in $S^J$, then $ww_1>w'w_1$ in $S^{J_1}$, hence by induction
\[
 \Phi_{J_1}(ww_1)<\Phi_{J_1}(ww_1),
\]
we get 
\[
 \Phi_{J_1}(w)<\Phi_{J_1}(w),
\]
since the morphism ${_{b(\Delta_{1}^1), \cdots, b(\Delta_{i_{0}}^1)}\psi}$
preserves the order.

\end{proof}

\begin{teo}\label{teo-main-regular}
Let $v_{1}, v_{2}\in S^{J}$, then we have 
\[
 P_{\Phi_{J}(v_{1}), \Phi_{J}(v_{2})}(q)=P_{v_{1}, v_{2}}^{J}(q)
\]
where on the right hand side is the parabolic KL polynomial indexed by $v_{1}, v_{2}$.
\end{teo}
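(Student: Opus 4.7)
The plan is to combine two ingredients: the standard reduction of parabolic Kazhdan--Lusztig polynomials to ordinary ones in type $A$, and an inductive geometric reduction along the tower $S(\a_{\Id}^J) \leftarrow S(\a_1) \leftarrow \cdots \leftarrow S(\a^{\sym})$ obtained in the previous subsection. On the combinatorial side, one uses that for $v_1, v_2 \in S_n^J$ the parabolic polynomial $P_{v_1,v_2}^J(q)$ equals the ordinary Kazhdan--Lusztig polynomial $P_{v_1,v_2}(q)$ of the pair $(v_1,v_2)$ viewed inside $S_n$; this is the type--$A$ specialization of Deodhar~\cite{D2} alluded to before the theorem, coming from the fact that the fibration $GL_n/P\to GL_n/B$ has affine Schubert cells as fibers and therefore identifies the local intersection cohomology of the corresponding Schubert varieties. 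So it suffices to prove the equality $P_{\Phi_J(v_1),\Phi_J(v_2)}(q)=P_{v_1,v_2}(q)$ between the geometric polynomial and the ordinary Kazhdan--Lusztig polynomial.

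I would prove this latter identity by induction on $|J|$. The base case $|J|=0$ is the symmetric situation $\a_{\Id}^{\emptyset}=\a_{\Id}$ and $\Phi_\emptyset=\Phi$: Theorem~\ref{prop-geo-KZ} establishes that the geometric polynomials $P_{\Phi(w_1),\Phi(w_2)}(q)$ satisfy the five relations~(1)--(5) characterizing the Kazhdan--Lusztig polynomials (relations (1)--(4) are proved in the preceding subsections and (5) is obtained through the decomposition--theorem analysis culminating in Proposition~\ref{prop: 5.3.12}), and the Kazhdan--Lusztig uniqueness theorem of~\cite{KL79} forces $P_{\Phi(w_1),\Phi(w_2)}(q)=P_{w_1,w_2}(q)$.

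For the inductive step, I would decompose $J=J_1\cup\{\sigma_{i_0}\}$ exactly as in the proofs of Proposition~\ref{prop: 6.2.5} and Proposition~\ref{prop: 6.2.10}, giving the factorization
\[
\Phi_J \;=\; {_{(b(\Delta_1^1),\ldots,b(\Delta_{i_0}^1))}\psi}\;\circ\;\Phi_{J_1}.
\]
The geometric operation attached to the composition map $\psi$ is the one studied in \cite[\S 5.3, \S 6]{Deng23}: the closure $\overline{O}_{\Phi_{J_1}(v)}$ sits over $\overline{O}_{\Phi_J(v)}$ as an iterated affine bundle with fibers of constant dimension on every stratum, and this is precisely the set-up in which \cite[Corollary~5.38]{Deng23} (the invariance statement used already for relations~(2),(3),(4) in \S 2) equates the intersection--cohomology polynomials up to the normalizing shift built into the definition of $P_{\b,\c}(q)$. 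This yields
\[
P_{\Phi_J(v_1),\Phi_J(v_2)}(q)\;=\;P_{\Phi_{J_1}(v_1 w),\Phi_{J_1}(v_2 w)}(q)
\]
for a suitable common right factor $w\in S_J\cap S^{J_1}$ supplied by Lemmas~\ref{lem: 6.2.8} and~\ref{lem: 6.2.9}. By the inductive hypothesis the right--hand side equals $P_{v_1 w,v_2 w}(q)$, and the Bruhat--length additivity $\ell(v_iw)=\ell(v_i)+\ell(w)$ of Proposition~\ref{prop: 5.2.2}(2) together with the standard reduction for common factors in $S^{J_1}$ identifies this ordinary Kazhdan--Lusztig polynomial with $P_{v_1,v_2}(q)$, as required.

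The main obstacle is the bookkeeping in the inductive step: verifying that, under the factorization $\Phi_J=\psi\circ\Phi_{J_1}$, the chosen lifts of $v_1,v_2\in S_n^J$ to $S_n^{J_1}$ are precisely $v_1w,v_2w$ for a common $w$, and that the affine--bundle structure underlying $\psi$ contributes exactly the dimension shifts absorbed by the normalizations $q^{\frac12(\dim O_{\Phi(y)}-\dim O_{\Phi(x)})}$ in~\eqref{eqn-KZ-geo}, so that no spurious power of $q$ survives. Once this matching is in place, both sides of the theorem reduce canonically to the symmetric Kazhdan--Lusztig polynomial and the identity follows.
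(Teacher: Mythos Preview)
Your argument is essentially the paper's: induction on $|J|$, the factorization $\Phi_J={_{(b(\Delta_1^1),\ldots,b(\Delta_{i_0}^1))}\psi}\circ\Phi_{J_1}$, invariance of the intersection-cohomology polynomials under $\psi$ (the paper cites \cite[Corollary~5.37]{Deng23} here), and Deodhar's reduction of parabolic to ordinary Kazhdan--Lusztig polynomials. The one substantive bookkeeping difference is in how the induction closes. The paper invokes Deodhar in the maximal-representative form $P^{J}_{v_1,v_2}=P_{v_1v_J,\,v_2v_J}$ and then checks the purely combinatorial identity $v_J=w_1v_{J_1}$ (with $w_1$ the longest element of $S_J^{J_1}$ from Lemma~\ref{lem: 6.2.8}); this immediately turns the inductive output $P_{v_1w_1v_{J_1},\,v_2w_1v_{J_1}}$ into $P_{v_1v_J,\,v_2v_J}$. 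You instead invoke Deodhar in the minimal-representative form $P^J_{v_1,v_2}=P_{v_1,v_2}$ and then need $P_{v_1w,\,v_2w}=P_{v_1,v_2}$; this is correct for $w=w_1$ but your justification (``standard reduction for common factors in $S^{J_1}$'' together with length additivity) is imprecise, since right-multiplication by a fixed element of $W_J$ does not in general preserve ordinary Kazhdan--Lusztig polynomials. The clean way to salvage that step is exactly the paper's: use $w_1v_{J_1}=v_J$ and apply Deodhar once more, for $J_1$, to rewrite $P_{v_1w_1,\,v_2w_1}=P_{v_1w_1v_{J_1},\,v_2w_1v_{J_1}}=P_{v_1v_J,\,v_2v_J}=P_{v_1,v_2}$. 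With that clarification your proof and the paper's coincide.
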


\begin{proof}
As is proved in \cite{D2}, we have $P_{v_{1}, v_{2}}^{J}(q)=P_{v_{1}v_{J}, w_{2}v_{J}}(q)$, 
where $v_{J}$ is the maximal element in $S_{J}$. So it suffices to show that we have 
the equality $P_{\Phi_{J}(v_{1}), \Phi_{J}(v_{2})}(q)=P_{v_{1}v_{J}, v_{2}v_{J}}(q)$.
Also, from Lemma \ref{lem: 6.2.7}, we know that 
\[
 \Phi_{J}(v_1)= {_{b(\Delta_{1}^1), \cdots, b(\Delta_{i_{0}}^1)}\psi}(\Phi_{J_1}(v_1w_1)),
\]
where $w_1$ is described in Lemma \ref{lem: 6.2.8}.
Hence we have 
\[
 P_{\Phi_{J_{1}}(v_1w_1), \Phi_{J_{1}}(v_{2}w_1)}(q)=
 P_{\Phi_{J}(v_{1}), \Phi_{J}(v_{2})}(q)
\]
by \cite[Corollary 5.37]{Deng23}.

By induction, we have 
\[
 P_{\Phi_{J_{1}}(v_{1}w_1), \Phi_{J_{1}}(v_{2}w_1)}(q)
 =P_{v_{1}w_1v_{J_{1}}, v_{1}w_1v_{J_{1}}}(q).
\]
Now to finish, we have to show $v_{J}=w_1v_{J_{1}}$.
But we know that 
\[
 S_J=\coprod_{j}w_jS_{J_1}
\]
with $w_1=\max\{w_j: j=1, \cdots, i_1-i_0+1\}$, we surely have
\[
 v_{J}=w_1v_{J_{1}}.
\]
\end{proof}

More generally, for $J_{i}\subseteq S$, $i=1,2$, we can consider
the $P_{J_{1}}$ orbit in $GL_{n}/P_{J_{2}}$. We state the related results
without proof since it is exactly the same as the previous treated case. 

\begin{definition}
Let $S_n^{J_{1}, J_{2}}=\{w\in S_{n}: s_{1}vs_{2}>v \text{ for all } s_{i}\in J_{i}, i=1,2\}$. 
\end{definition}

\begin{definition}\label{def: 6.2.17}
Let $v\in S_n^{J_1, J_2}$. We define 
\[
 S_{J_1}^{J_2, v}=\{w\in S_{J_1}: ws>w, \text{ for all } s\in S_{J_1}\cap vS_{J_2}v^{-1}\}.
\]
 
\end{definition}

\remk If we let $M_J$ be the Levi subgroup of $P_J$, then the set 
$S_{J_1}^{J_2, v}$ corresponds to the Borel orbits in $M_{J_1}/(M_{J_1}\cap vM_{J_2}v^{-1})$.

\begin{prop}\label{prop: 6.2.13}
 We have 
 \begin{description}
  \item [(1)]$S_{n}= \coprod_{v\in S_n^{J_{1}, J_{2}}}S_{J_{1}}vS_{J_{2}}$;
  \item [(2)]$\ell(xvy)=\ell(v)+\ell(x)+\ell(y)$ for $v\in S^{J_{1}, J_{2}}$ , $x\in S_{J_{1}}^{J_2, v}, y\in S_{J_{2}}$.
  \item [(3)]The $P_{J_{1}}$ orbits in $GL_{n}/P_{J_{2}}$ are indexed 
   by $S_n^{J_{1}, J_{2}}$.
 \end{description}  
\end{prop}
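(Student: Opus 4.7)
The plan is to iterate Proposition \ref{prop: 5.2.2}, applying it once with $J = J_2$ on the right and once inside $S_{J_1}$ on the left, and then to deduce (3) from the Bruhat decomposition of $\GL_n$. The argument is parallel to the single-parabolic case treated in Proposition \ref{prop: 6.2.10}, with Proposition \ref{prop: 5.2.2} playing the role of the baseline.

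For (1), I would first write $S_n = \coprod_{w \in S_n^{J_2}} w S_{J_2}$ using Proposition \ref{prop: 5.2.2}. The key auxiliary step is to check that left multiplication by $S_{J_1}$ preserves the subset $S_n^{J_2}$: for $w \in S_n^{J_2}$ and $s \in J_1$, the element $sw$ again lies in $S_n^{J_2}$ by the exchange condition in $S_n$ (if $swt < sw$ for some $t \in J_2$, one either contradicts $w \in S_n^{J_2}$ or forces $s \in w S_{J_2} w^{-1}$, producing a length reduction). Granting this, $S_{J_1}$ acts on $S_n^{J_2}$ and each orbit admits a unique minimum-length representative $v$; the minimality forces $sv > v$ for all $s \in J_1$, which together with $v \in S_n^{J_2}$ gives $v \in S_n^{J_1, J_2}$. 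The disjoint decomposition in (1) follows.

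For (2), one begins with $\ell(vy) = \ell(v) + \ell(y)$ from Proposition \ref{prop: 5.2.2}(2). It remains to show $\ell(x \cdot vy) = \ell(x) + \ell(vy)$. The crucial structural fact is that for $v$ of minimum length in its double coset, the intersection $S_{J_1} \cap v S_{J_2} v^{-1}$ is a standard parabolic subgroup $S_K$ of $S_{J_1}$, indexed by those $s \in J_1$ such that $v^{-1} s v$ is itself a simple reflection lying in $J_2$. Definition \ref{def: 6.2.17} then identifies $S_{J_1}^{J_2, v}$ with the minimum-length representatives of left cosets in $S_{J_1}/S_K$. Applying Proposition \ref{prop: 5.2.2} inside $S_{J_1}$ and using induction on $\ell(x)$, one verifies $\ell(xv) = \ell(x) + \ell(v)$ together with $xv \in S_n^{J_2}$; another application of Proposition \ref{prop: 5.2.2}(2) then yields $\ell(xvy) = \ell(xv) + \ell(y) = \ell(x) + \ell(v) + \ell(y)$.

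For (3), the Bruhat decomposition $\GL_n = \coprod_{w \in S_n} B w B$ identifies $B \backslash \GL_n / B$ with $S_n$; passing to quotients by $P_{J_1} \supseteq B$ on the left and $P_{J_2} \supseteq B$ on the right gives $P_{J_1} \backslash \GL_n / P_{J_2} \simeq S_{J_1} \backslash S_n / S_{J_2}$, which by (1) is parametrized by $S_n^{J_1, J_2}$. The main obstacle in the whole proof is part (2), specifically verifying that $S_{J_1} \cap v S_{J_2} v^{-1}$ is a standard parabolic of $S_{J_1}$: this is a classical Coxeter-theoretic lemma relying crucially on the minimality of $v$ in its double coset, so that conjugation by $v$ sends the relevant simple reflections of $J_2$ to simple reflections of $J_1$, rather than to arbitrary reflections.
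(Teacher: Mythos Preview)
The paper gives no proof of this proposition: just before stating it the author writes ``We state the related results without proof since it is exactly the same as the previous treated case,'' deferring to the single-parabolic Proposition~\ref{prop: 5.2.2} (itself only cited from \cite{BF}). So your sketch already supplies more than the paper does, and your overall strategy as well as your handling of (2) and (3) are correct.

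There is, however, a genuine error in your argument for (1). You assert that left multiplication by $S_{J_1}$ preserves $S_n^{J_2}$, i.e.\ that $sw\in S_n^{J_2}$ whenever $w\in S_n^{J_2}$ and $s\in J_1$. This is false: in $S_3$ with $J_2=\{\sigma_2\}$ one has $S_3^{J_2}=\{e,\sigma_1,\sigma_2\sigma_1\}$, and $\sigma_1\cdot(\sigma_2\sigma_1)=\sigma_1\sigma_2\sigma_1\notin S_3^{J_2}$. The correct statement is the dichotomy (Deodhar's lemma): for $w\in S_n^{J_2}$ and any simple reflection $s$, either $sw\in S_n^{J_2}$ or $sw=wt$ for some $t\in J_2$, so that $swS_{J_2}=wS_{J_2}$. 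Your own parenthetical almost arrives here---the alternative ``$s\in wS_{J_2}w^{-1}$'' is exactly the second branch---but you treat it as a contradiction rather than as a fixed point of the induced action on cosets. Once you replace ``$S_{J_1}$ preserves $S_n^{J_2}$'' by ``$S_{J_1}$ acts on $S_n/S_{J_2}\cong S_n^{J_2}$ via this dichotomy,'' the rest of your argument (taking minimal-length orbit representatives and observing that minimality forces $sv>v$ for all $s\in J_1$) goes through unchanged.
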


\begin{definition}
 For $v_{1}, v_{2}\in S^{J_{1}, J_{2}}$ such that $v_{1}\leq v_{2}$, we let $P^{J_{1}, J_{2}}_{v_{1}, v_{2}}(q)$ be the 
 Poincar\'e series of  the localized intersection cohomology 
 $$\mathcal{H}^{\bullet}(\line {P_{J_{1}}v_{2}P_{J_{2}}})_{v_{1}P_{J_{2}}}.$$
\end{definition}

\begin{lemma}
For $v_{1}, v_{2}\in S^{J_{1}, J_{2}}$ such that $v_{1}\leq v_{2}$,
 we have 
 \[
  P^{J_{1}, J_{2}}_{v_{1}, v_{2}}(q)=P_{w_1, w_2}(q),
 \]
where $w_i$ is the element of maximal length in $S_{J_{1}}v_iS_{J_2}$.
\end{lemma}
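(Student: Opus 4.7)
The plan is to reduce the statement to the ordinary Kazhdan--Lusztig polynomial via the smooth fibration $\pi : GL_n/B \to GL_n/P_{J_2}$, which is a Zariski-locally trivial bundle with fiber $P_{J_2}/B$ of dimension $d:=\ell(w_{J_2})$. The fundamental point is that when $w_2$ is the maximum-length element of $S_{J_1}v_2 S_{J_2}$, the Schubert variety $\overline{Bw_2B}/B$ is already a union of fibers of $\pi$, so pulling back the intersection complex of $\overline{P_{J_1}v_2P_{J_2}}/P_{J_2}$ along $\pi$ reconstructs $IC(\overline{Bw_2B}/B)$ up to the expected shift by $d$.

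Concretely, I would first invoke Proposition \ref{prop: 6.2.13}(2) to write $w_i = x_i v_i w_{J_2}$ with $x_i$ maximal in $S_{J_1}^{J_2,v_i}$. Since $w_{J_2}$ is absorbed on the right, $w_i$ is maximal in its right coset $w_iS_{J_2}$, from which it follows by the standard Bruhat decomposition argument that $\overline{Bw_iB}$ is right $P_{J_2}$-stable. This in turn yields the two crucial identifications
\[
\pi^{-1}\!\bigl(\overline{Bw_2P_{J_2}}/P_{J_2}\bigr)=\overline{Bw_2B}/B,\qquad
\pi\!\bigl(\overline{Bw_2B}/B\bigr)=\overline{P_{J_1}v_2P_{J_2}}/P_{J_2},
\]
where the second equality uses that among the $B$-orbits on $G/P_{J_2}$ contained in $P_{J_1}v_2P_{J_2}/P_{J_2}$, the maximum-dimensional one is $B\tilde w_2 P_{J_2}/P_{J_2}$ with $\tilde w_2 = x_2v_2 \in S^{J_2}$, and $B\tilde w_2 P_{J_2} = Bw_2 P_{J_2}$. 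Smoothness of $\pi$ then gives the perverse-sheaf identity $IC(\overline{Bw_2B}/B) = \pi^{*}IC(\overline{P_{J_1}v_2P_{J_2}}/P_{J_2})[d]$.

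To transport stalks to the correct points, I would use the $P_{J_1}$-equivariance of $IC(\overline{P_{J_1}v_2P_{J_2}}/P_{J_2})$ to replace the stalk at $v_1P_{J_2}$ by the stalk at $w_1P_{J_2}$ (both lying in the same $P_{J_1}$-orbit); the point $w_1B$ then lies in $\pi^{-1}(w_1P_{J_2})\cap Bw_1B/B$, so the pullback identity gives $\mathcal{H}^i IC(\overline{Bw_2B}/B)_{w_1B} \cong \mathcal{H}^{i-d} IC(\overline{P_{J_1}v_2P_{J_2}}/P_{J_2})_{v_1P_{J_2}}$. The final bookkeeping: the normalizing shift $\dim\overline{Bw_2B}/B - \dim\overline{Bw_1B}/B = \ell(w_2)-\ell(w_1)$ differs from $\dim\overline{P_{J_1}v_2P_{J_2}}/P_{J_2} - \dim\overline{P_{J_1}v_1P_{J_2}}/P_{J_2} = \ell(\tilde w_2)-\ell(\tilde w_1)$ by zero (the $w_{J_2}$ contributions cancel), so the $q^{d/2}$ factor arising from the $[d]$ shift is exactly absorbed, producing the asserted equality of polynomials.

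The main obstacle will be the second identification in the displayed equation: verifying carefully that $\overline{P_{J_1}v_2P_{J_2}}/P_{J_2}$ and $\overline{Bw_2P_{J_2}}/P_{J_2}$ agree as closed subvarieties of $G/P_{J_2}$. This requires knowing that the Bruhat order on $S^{J_1,J_2}$ matches closure order of $P_{J_1}$-orbits on $G/P_{J_2}$, which is classical, and that $w_2$ is indeed the length-maximal representative whose $B$-cell is dense in the $P_{J_1}$-orbit, which follows from Proposition \ref{prop: 6.2.13}(2). Once these are in place, the proof becomes a clean application of the smooth pullback formula for intersection complexes.
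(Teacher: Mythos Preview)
Your argument is correct and is precisely the standard geometric proof. In the paper this lemma is actually stated \emph{without proof}: the surrounding block of two-sided parabolic results is introduced with the sentence ``We state the related results without proof since it is exactly the same as the previous treated case,'' and in that previous (one-sided) case the identity $P^{J}_{v_1,v_2}(q)=P_{v_1v_J,\,v_2v_J}(q)$ is simply quoted from Deodhar \cite{D2}. So there is no proof in the paper to compare against; what you have written is exactly the two-sided version of Deodhar's fibration argument that the paper implicitly relies on.

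A couple of minor points worth tightening. First, you do not really need to verify that $\tilde w_2=x_2v_2$ lies in $S^{J_2}$ via the description of $S_{J_1}^{J_2,v_2}$; it is cleaner to observe directly that since $w_2$ is maximal in the double coset it satisfies $w_2s<w_2$ for every $s\in J_2$, so $\overline{Bw_2B}$ is right $P_{J_2}$-saturated, and likewise $sw_2<w_2$ for every $s\in J_1$ gives left $P_{J_1}$-saturation, whence $\overline{Bw_2B}=\overline{P_{J_1}v_2P_{J_2}}$ already in $G$. This handles your ``main obstacle'' in one stroke. Second, the dimension bookkeeping you sketch is right: $\ell(w_i)=\ell(\tilde w_i)+\ell(w_{J_2})$, so the $d=\ell(w_{J_2})$ shift from $\pi^*[d]$ exactly accounts for the difference between the two normalisations, and no residual $q^{d/2}$ survives.
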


\begin{notation}\label{nota: 6.2.21}
 Let $\a_{\Id}^{J_1, J_2}=\{\Delta_{1}, \cdots, \Delta_{n}\}$ such that 
 \[
  e(\Delta_{1})\leq \cdots\leq  e(\Delta_{n}), 
 \]
 such that 
\[
 e(\Delta_{i})=e(\Delta_{i+1}) \text{ if and only if } \sigma_{i}\in J_{1}
\]
and 
\[
  b(\Delta_{1})\leq \cdots\leq b(\Delta_{n}),
\]
such that 
\[
 b(\Delta_{i})=b(\Delta_{i+1}) \text{ if and only if } \sigma_{i}\in J_{2}
\]
and $b(\Delta_{n})\leq e(\Delta_{1})$.
\end{notation}

\begin{definition}\label{def: 6.2.22}
We call a multisegment $\a\in S(\a_{\Id}^{J_1, J_2})$ a multisegment of parabolic type $(J_{1}, J_{2})$.
\end{definition}

\begin{lemma}\label{lem: 6.2.3}
 For $w\in S^{J_{1}, J_{2}}$, let $\a_{w}^{J_1, J_2}=\sum [b(\Delta_{i}), e(\Delta_{w(i)}) ]$, then 
 $\a_{w}^{J_1, J_2}\in S(\a_{\Id}^{J_1, J_2})$.
Therefore we have an application
\begin{align*}
 \Phi_{J_{1}, J_{2}}: S^{J_{1}, J_{2}}&\rightarrow S(\a_{\Id}^{J_1, J_2})\\
 w&\mapsto \a_{w}^{J_1, J_2}.
\end{align*}
\end{lemma}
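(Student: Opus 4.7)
The plan is to induct on $|J_1|$, mirroring the inductive strategy used to establish Proposition \ref{prop: 6.2.5}. In the base case $|J_1| = 0$ one has $\a_{\Id}^{\emptyset, J_2} = \a_{\Id}^{J_2}$ in the sense of Notation \ref{nota: 6.2.21} versus the one-sided Notation \ref{def: 6.2.5}, and $S_n^{\emptyset, J_2} = S_n^{J_2}$, so the claim is exactly Proposition \ref{prop: 6.2.5} applied to $J = J_2$. For the inductive step I would reduce $J_1$ by one generator, keeping $J_2$ fixed; by the symmetry between beginnings and endpoints in Notation \ref{nota: 6.2.21}, this is completely parallel to the reduction of $J$ carried out in the proof of Proposition \ref{prop: 6.2.5}, with the roles of left and right interchanged.

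Concretely, I would set $j_0 = \min\{j : \sigma_j \in J_1\}$, $j_1 = \max\{j : e(\Delta_j) = e(\Delta_{j_0})\}$, and $J_1' = J_1 \setminus \{\sigma_{j_0}\}$. Then I would construct an auxiliary multisegment $\a_1 = \a_{\Id}^{J_1', J_2}$ by separating the coincident endpoints $e(\Delta_{j_0}), \ldots, e(\Delta_{j_1})$ (pushing the earlier ones strictly below), so that $\a_{\Id}^{J_1, J_2}$ is recovered from $\a_1$ by successive specialization along the endpoint-side dual of the operation ${^{(\cdot)}\a}$ of \cite[Definition 5.1]{Deng23}. By the left-acting analogue of Lemma \ref{lem: 6.2.8} on $J_1$, one obtains cycles $w_i \in S_{J_1}$ giving
\[
S_n^{J_1', J_2} = \coprod_i w_i\, S_n^{J_1, J_2}.
\]
Given $w \in S_n^{J_1, J_2}$, the induction hypothesis applied to the strictly smaller pair $(J_1', J_2)$ yields $\a_{w_1 w}^{J_1', J_2} \in S(\a_{\Id}^{J_1', J_2})$. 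The analogues of Lemmas \ref{lem: 6.2.7} and \ref{lem: 6.2.9}, adapted to act on endpoints via $w_1$ on the left rather than on beginnings via $w_1$ on the right, then identify the image of $\a_{w_1 w}^{J_1', J_2}$ under the corresponding dual symmetrization map as $\a_w^{J_1, J_2}$. Since this symmetrization sends $S(\a_1)$ into $S(\a_{\Id}^{J_1, J_2})$ by its construction, we conclude $\a_w^{J_1, J_2} \in S(\a_{\Id}^{J_1, J_2})$, so that $\Phi_{J_1, J_2}$ is a well-defined map.

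The main obstacle is checking that the length-additive double coset identity $\ell(xvy) = \ell(v) + \ell(x) + \ell(y)$ of Proposition \ref{prop: 6.2.13}(2) is compatible with the one-sided coset bookkeeping of Lemmas \ref{lem: 6.2.7}--\ref{lem: 6.2.9}; equivalently, that the cycles $w_i \in S_{J_1}$ produced by the modified Lemma \ref{lem: 6.2.8} preserve the property of being the minimal representative of an $S_{J_2}$-coset on the right when multiplied on the left. This ultimately follows from the fact that $J_1$ acts on endpoints while $J_2$ acts on beginnings, so the pertinent sub-symmetric groups commute in the bookkeeping needed to run the induction. Once this is verified, the induction closes exactly as in the one-sided case treated in Proposition \ref{prop: 6.2.5}, and the lemma is established.
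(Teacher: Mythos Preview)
Your proposal is correct and matches the paper's own approach: the paper explicitly omits the proof, announcing at the start of the two-sided discussion that ``we state the related results without proof since it is exactly the same as the previous treated case,'' and your induction on $|J_1|$ with base case Proposition \ref{prop: 6.2.5} is precisely that intended mirroring of the one-sided argument. Your identification of the one extra check---that left-multiplication by the cycles $w_i \in S_{J_1}$ preserves right-$S_{J_2}$-minimality because the $J_1$-side manipulation touches only endpoints while the $J_2$-condition constrains only beginnings---is the sole new wrinkle in the two-sided setting, and your reasoning for it is sound.
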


\begin{prop}\label{prop: 6.2.4}
The morphism $\Phi_{J_{1}, J_{2}}$ is bijective and translate the inverse Bruhat order
on $S^{J_{1}, J_{2}}$ to the order on $S(\a_{\Id}^{J_1, J_2})$.
\end{prop}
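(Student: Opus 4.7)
The proof follows the same strategy as Proposition \ref{prop: 6.2.10}, by induction on $|J_1|$, the parameter $J_2$ playing in the inductive step the role that the full parabolic $J$ played in the earlier single-sided argument. The base case $J_1 = \emptyset$ reduces to Proposition \ref{prop: 6.2.10} itself: under $J_1 = \emptyset$ the set $S_n^{\emptyset, J_2}$ coincides with $S_n^{J_2}$, Notation \ref{nota: 6.2.21} degenerates to the conditions defining $\a_{\Id}^{J_2}$ (since no equality $e(\Delta_i) = e(\Delta_{i+1})$ is forced), and $\Phi_{\emptyset, J_2}$ agrees with $\Phi_{J_2}$.

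For the inductive step, I pick $j_0$ minimal with $\sigma_{j_0} \in J_1$, set $J_1' = J_1 \setminus \{\sigma_{j_0}\}$, and let $j_1$ be maximal with $e(\Delta_{j_0}) = e(\Delta_{j_1})$. Mirroring the construction of $\a_1$ in the proof of Proposition \ref{prop: 6.2.5}, I would produce an auxiliary multisegment $\a_1'$ from $\a_{\Id}^{J_1, J_2}$ by replacing each $\Delta_i$ with $i \le j_0$ by its left-extended segment $^+\Delta_i$, so that $\a_1'$ is of parabolic type $(J_1', J_2)$ and a shift operator of the form $^{(\ldots)}\psi$ relates $S(\a_1')$ with $S(\a_{\Id}^{J_1, J_2})$. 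Using the two-sided analogue of Lemma \ref{lem: 6.2.8}, every $v \in S^{J_1', J_2}$ decomposes uniquely as $w_i w$ with $w \in S^{J_1, J_2}$ and $w_i$ ranging over a prescribed set of minimal coset representatives in $S_{J_1}$, with length-additivity guaranteed by Proposition \ref{prop: 6.2.13}(2). The two-sided analogues of Lemmas \ref{lem: 6.2.7} and \ref{lem: 6.2.9} would then show that $\Phi_{J_1', J_2}(w_iw)$ lies in the subposet of $S(\a_1')$ on which $\psi$ is defined, and that $\psi(\Phi_{J_1', J_2}(w_iw)) = \Phi_{J_1, J_2}(w)$.

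From this one gets the factorization $\Phi_{J_1, J_2} = \psi \circ \Phi_{J_1', J_2}$ on the relevant subdomain. Bijectivity descends from the inductive hypothesis for $\Phi_{J_1', J_2}$ combined with the bijectivity of $\psi$ on the subposet (the two-sided analogue of \cite[Proposition 6.5]{Deng23}), and the inverse-Bruhat-to-$S(\a_{\Id}^{J_1, J_2})$ order translation descends because $\psi$ preserves the order $\le$. The main obstacle I anticipate is the combinatorial bookkeeping of the two-sided setting: one has to verify that the left shift which eliminates $\sigma_{j_0}$ from $J_1$ does not disturb the $J_2$-patterns imposed on the beginnings $b(\Delta_i)$ in Notation \ref{nota: 6.2.21}, so that $\a_1'$ is genuinely of parabolic type $(J_1', J_2)$ and the inductive hypothesis applies. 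Once this compatibility is checked, the rest is a direct transcription of the proof of Proposition \ref{prop: 6.2.10}, which is presumably why the author says the argument is "exactly the same as the previous treated case."
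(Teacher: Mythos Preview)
The paper gives no proof of this proposition: it simply asserts that the argument is ``exactly the same as the previous treated case'' (namely Proposition \ref{prop: 6.2.10}). Your overall strategy---induction with the single-sided result as base case, and an inductive step that peels off one simple reflection via the shift operators of \cite{Deng23}---is the intended one and is consistent with that assertion.

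There is, however, a genuine slip in your inductive step. You induct on $|J_1|$, and $J_1$ governs the \emph{ends} $e(\Delta_i)$ in Notation \ref{nota: 6.2.21}: $\sigma_i\in J_1$ iff $e(\Delta_i)=e(\Delta_{i+1})$. To remove $\sigma_{j_0}$ from $J_1$ you must separate the ends $e(\Delta_{j_0})=e(\Delta_{j_0+1})$, which requires the \emph{right} extension $\Delta\mapsto\Delta^{+}$ together with the right-sided operator $\psi_k$ of \cite[\S 5]{Deng23}. Your proposed construction instead applies the \emph{left} extension $\Delta\mapsto{}^{+}\Delta$ to $\Delta_i$ for $i\le j_0$; this alters only the beginnings $b(\Delta_i)$, hence leaves $J_1$ untouched and (depending on whether $\sigma_{j_0}\in J_2$) may instead modify $J_2$. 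In particular your $\a_1'$ is \emph{not} of parabolic type $(J_1',J_2)$ as claimed, so the induction hypothesis cannot be invoked.

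The fix is straightforward: either keep the induction on $|J_1|$ but replace $^{+}\Delta$ and $^{(\ldots)}\psi$ throughout by their right-sided counterparts $\Delta^{+}$ and $\psi_{k}$ (the coset decomposition in the spirit of Lemma \ref{lem: 6.2.8} then acts by \emph{left} multiplication, as you wrote, which is correct for refining $J_1$); or, equivalently, induct on $|J_2|$ with base case $J_2=\emptyset$ (which follows from Proposition \ref{prop: 6.2.10} by the evident begin/end symmetry) and then use $^{+}\Delta$ and $^{(\ldots)}\psi$ exactly as in the proof of Proposition \ref{prop: 6.2.10}. Either route carries through, and your concern about the shift disturbing the other parabolic pattern is then vacuous: the right extension touches only ends, leaving the $J_2$-conditions on beginnings intact.
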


\begin{prop}\label{prop: 6.2.25}
Let $w_{1}, w_{2}\in S_n^{J_{1},J_{2}}$, then we have 
\[
 P_{\Phi_{J_{1}, J_{2}}(w_{1}), \Phi_{J_{1}, J_{2}}(w_{2})}(q)=P_{w_{1}, w_{2}}^{J_{1}, J_{2}}(q)
\]
where on the right hand side is the parabolic KL polynomial indexed by $w_{1}, w_{2}$.
\end{prop}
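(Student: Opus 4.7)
The plan is to mirror the strategy of Theorem \ref{teo-main-regular}, treating the two parabolic structures in succession. The starting point is Lemma \ref{lem: 6.2.3} together with Proposition \ref{prop: 6.2.13}(2), which allow us to identify the longest representative in $S_{J_1}w_iS_{J_2}$ as $v_{J_1}w_iv_{J_2}$ for $w_i \in S_n^{J_1, J_2}$. Combined with Lemma \ref{lem: 6.2.3} (the Deodhar identity), this reduces the problem to proving
\[
 P_{\Phi_{J_1,J_2}(w_1),\,\Phi_{J_1,J_2}(w_2)}(q) \;=\; P_{v_{J_1}w_1 v_{J_2},\,v_{J_1}w_2 v_{J_2}}(q),
\]
i.e.\ matching a KL polynomial attached to a parabolic type $(J_1, J_2)$ multisegment with an ordinary KL polynomial in $S_n$ evaluated on the appropriate translates.

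To establish this equality, I would build a reduction chain of multisegments
\[
 \a_{\Id}^{J_1, J_2} \;\longrightarrow\; \a_{\Id}^{J_1', J_2} \;\longrightarrow\; \cdots \;\longrightarrow\; \a_{\Id}^{\emptyset, J_2} \;\longrightarrow\; \cdots \;\longrightarrow\; \a_{\Id}^{\emptyset, \emptyset},
\]
peeling off one simple reflection from $J_1$ or $J_2$ at each step, exactly as in Proposition \ref{prop: 6.2.5} and the proof of Theorem \ref{teo-main-regular}. Reductions on the $J_2$ side proceed via the maps $_{\mathbf{c}}\psi$ of \cite[Definition 6.6]{Deng23}; reductions on the $J_1$ side require the dual construction $^{\mathbf{c}}\psi$, which increases the ends (rather than the beginnings) of certain segments in a controlled way. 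At each reduction step, \cite[Corollary 5.37]{Deng23} preserves the KL polynomial on the geometric side, while Lemma \ref{lem: 6.2.8} (applied to both $J_1$ and $J_2$) keeps track of the corresponding translation by $w_j \in S_J^{J_1}$ on the group-theoretic side.

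Concretely, I would verify by double induction on $|J_1| + |J_2|$ the commutative diagrams relating $\Phi_{J_1, J_2}$ to $\Phi_{J_1', J_2'}$ for each smaller pair, giving analogues of Lemmas \ref{lem: 6.2.7}--\ref{lem: 6.2.9} for the two-sided setting. At the base case $J_1 = J_2 = \emptyset$, the result is \cite[Proposition 4.7]{Deng23} combined with Theorem \ref{prop-geo-KZ}, since $\Phi_{\emptyset, \emptyset}$ is the symmetric parametrization and $v_{J_1}w_iv_{J_2} = w_i$. Chaining the reductions together then produces the desired identity.

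The main obstacle will be the combinatorial bookkeeping for the dual reduction $^{\mathbf{c}}\psi$ on the $J_1$ side: unlike the $J_2$ side, where the argument of Proposition \ref{prop: 6.2.5} applies verbatim, one must verify that increasing an end by one respects the double-coset parametrization $\Phi_{J_1, J_2}$ and, simultaneously, that the element $w_j$ appearing in the decomposition $S_n^{J_1'} = \coprod_j S_n^{J_1} w_j$ composes correctly on the left to yield $v_{J_1}$ in the limit. Once this two-sided analogue of Lemmas \ref{lem: 6.2.7}--\ref{lem: 6.2.9} is in place, the proof follows the template of Theorem \ref{teo-main-regular} essentially word for word.
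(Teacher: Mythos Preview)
Your proposal is correct and matches the paper's own treatment: the paper does not write out a proof for this proposition but explicitly states that the argument is ``exactly the same as the previous treated case'' (namely Theorem~\ref{teo-main-regular}), and your plan is precisely the natural two-sided extension of that proof---inducting on $|J_1|+|J_2|$, stripping one simple reflection at a time via the $\psi$-maps (and their dual on the $J_1$ side), invoking \cite[Corollary~5.37]{Deng23} at each step, and using the Deodhar-type identity (the unlabelled lemma just before Notation~\ref{nota: 6.2.21}, not Lemma~\ref{lem: 6.2.3}) to pass from $P^{J_1,J_2}_{w_1,w_2}$ to an ordinary KL polynomial on maximal-length representatives. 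Your identification of the extra bookkeeping needed on the $J_1$ side (separating ends rather than beginnings) is exactly the new ingredient the two-sided case requires.
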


\begin{example}
 We are now ready to interpret the following results
(due to Zelevinsky, see \cite{Z3} Section 3.3): let $\a=k[0, 1]+(n-k)[1,2]$
 then $\a$ corresponding to the identity in $S_n^{J, J}$ with 
 \[
  J=\{\sigma_{i}: i\neq k\}. 
 \]
Note that in this case, we have $GL_{n}/P_{J}$ is the 
Grassmanian $G_{k}(\C^{n})$, where as the $P_{J}$
orbits correspond to the stratification, for $r\leq r_{0}=\min\{k, n-k\}$
and fixed $\C^k\in G_{k}(\C^n)$, 
\[
 X_{r}=\{U\in G_{k}(\C^{n}): \dim(U\cap \C^{k})= k-r\}
\]
with $\line{X_{r}}=\coprod_{r'\leq r}X_{r'}$.
\end{example}

\remk Note that in \cite{Z3}, Zelevinsky constructed a small resolution 
for the $\line{O}_{\a}$ with $\a=\{[1, 2], [2, 3]\}$, which corresponds
to a Schubert varieties of 2-step . Now with our interpretation, we should 
be able to construct a small resolution for all 2-step Schubert varieties.
We return to this question later.

\subsection{Non regular Case}

In this subsection, for a general multisegment $\a$, we will relate the poset $S(\a)$ 
to a Bruhat interval $[x,y]$ with $x<y$ in some $S_{r}^{J_1, J_2}$. 

Now let $\a$ be a multisegment. 
First of all, we decide 
the set $J_1, J_2$. 

\begin{definition}\label{def: 6.3.1}
We define two sets $J_1(\a), J_2(\a)$.
\begin{itemize}
 \item 
Let $b(\a)=\{k_1\leq \cdots\leq k_r\}$.
Then let $J_2(\a)\subseteq S_r$ be the set such that
 $\sigma_{i}\in J_2(\a)$ if and only if $k_i=k_{i+1}$. 
\item Let $e(\a)=\{\ell_1\leq \cdots \leq \ell_r\}$.
Then let $J_1(\a)\subseteq S_r$ be the set such that $\sigma_{i}\in J_1(\a)$ if and only if $\ell_{i}=\ell_{i+1}$.
\end{itemize}
\end{definition}


Keeping the notations in Definition \ref{def: 6.3.1},
\begin{prop}\label{prop: 6.3.2}
There exists a unique $w\in S_r^{J_1(\a), J_2(\a)}$, such that 
\[
 \a=\sum_{j}[k_j, \ell_{w(j)}].
\]
\end{prop}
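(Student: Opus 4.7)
The plan is to reduce the statement to a general fact about double cosets and the fact that the passage from a permutation $w \in S_r$ to the multisegment $\sum_j [k_j,\ell_{w(j)}]$ factors through the double quotient $S_{J_1(\a)}\backslash S_r / S_{J_2(\a)}$.

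First I would establish existence in its weakest form: some $w \in S_r$ satisfies $\a = \sum_j [k_j,\ell_{w(j)}]$. This is immediate, because $\a$ has exactly $r$ segments and the multisets $\{b(\Delta)\}_{\Delta\in\a} = \{k_1,\dots,k_r\}$ and $\{e(\Delta)\}_{\Delta\in\a}=\{\ell_1,\dots,\ell_r\}$. Choose any bijection of $\{1,\dots,r\}$ with the segments of $\a$ so that the $j$-th segment has beginning $k_j$; then its end is $\ell_{w(j)}$ for some $w \in S_r$.

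Next I would characterize the ambiguity. The key step is the following claim: for $w, w' \in S_r$, one has $\sum_j[k_j,\ell_{w(j)}] = \sum_j[k_j,\ell_{w'(j)}]$ if and only if $w' \in S_{J_1(\a)}\, w\, S_{J_2(\a)}$. For the easy direction, any $u \in S_{J_1(\a)}$ satisfies $\ell_{u(i)} = \ell_i$ for all $i$, so replacing $w$ by $uw$ leaves the sum unchanged; similarly, any $v \in S_{J_2(\a)}$ satisfies $k_{v(j)} = k_j$, and reindexing $j \mapsto v^{-1}(j)$ shows that $wv$ gives the same multisegment. For the converse, given that the two multisets of pairs $\{(k_j,\ell_{w(j)})\}$ and $\{(k_j,\ell_{w'(j)})\}$ agree, pick a bijection $\tau\in S_r$ matching them term-by-term. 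Then $k_j = k_{\tau(j)}$ for all $j$ forces $\tau \in S_{J_2(\a)}$, while $\ell_{w(j)} = \ell_{w'(\tau(j))}$ shows that $\sigma := w' \tau w^{-1}$ satisfies $\ell_{\sigma(i)} = \ell_i$ for every $i$, hence $\sigma \in S_{J_1(\a)}$. Therefore $w' = \sigma\, w\, \tau^{-1} \in S_{J_1(\a)}\, w\, S_{J_2(\a)}$.

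Finally, both existence and uniqueness follow by invoking Proposition \ref{prop: 6.2.13} (1), which asserts that each double coset $S_{J_1(\a)}\, w\, S_{J_2(\a)}$ contains exactly one element of $S_r^{J_1(\a),J_2(\a)}$. Applied to the double coset containing the $w$ produced in the first step, this distinguished representative is the unique element of $S_r^{J_1(\a),J_2(\a)}$ producing $\a$.

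The main obstacle is the non-trivial direction of the characterization in step two, where one must extract a double-coset factorization from the mere equality of two multisets of pairs; once the bookkeeping permutation $\tau \in S_{J_2(\a)}$ is chosen, everything else follows by straightforward composition, so the argument is more a careful indexing exercise than a deep statement.
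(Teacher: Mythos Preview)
Your argument is correct and follows the same strategy as the paper: produce some $w'\in S_r$ with $\a=\sum_j[k_j,\ell_{w'(j)}]$, observe that the expression is unchanged under left multiplication by $S_{J_1(\a)}$ and right multiplication by $S_{J_2(\a)}$, and invoke Proposition~\ref{prop: 6.2.13} to pass to the distinguished double-coset representative. The paper's proof carries out exactly these steps for existence but never addresses uniqueness; your additional converse direction (extracting $\tau\in S_{J_2(\a)}$ and $\sigma\in S_{J_1(\a)}$ from equality of the two multisets of pairs) fills this gap and is the natural way to do so.
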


\begin{proof}
We observe that there exists an element $w'\in S_r$, such that 
\[
\a=\sum_j [k_j, \ell_{w'}(j)]. 
\]
Now by Proposition \ref{prop: 6.2.13},
we know that there exists $w'=w_{J_1(\a)}ww_{J_2(\a)}$
with $w_{J_i(\a)}\in S_{J_i(\a)}$ for $i=1, 2$ and $w\in S_r^{J_1(\a), J_2(\a)}$.
Now we only need to prove that 
\[
 \a=\sum_j [k_j, \ell_{w}(j)].
\]
In fact, by definition of $J_i(\a), i=1,2$, we know that
\[
 k_j=k_{v(j)},  \text{ for all } v\in S_{J_{2}(\a)},
\]
\[
 \ell_{j}=\ell_{v(j)}, \text{ for all } v\in S_{J_{1}(\a)}.
\]
Hence 
\begin{align*}
 \a&=\sum_j [k_j, \ell_{w_{J_1(\a)}ww_{J_{2}(\a)}(j)}]\\
   &=\sum_j [k_j, \ell_{w(\a)w_{J_2(\a)}(j)}]\\
   &=\sum_j [k_{w_{J_2(\a)}^{-1}w^{-1}(j)}, \ell_{j}]\\
   &=\sum_j [k_j, \ell_{w(j)}].
\end{align*}
\end{proof}
 
Next we show how to reduce a general multisegment $\a$
to a multisegment $\a_{w}^{J_{1}(\a), J_2(\a)}$ of parabolic 
type $(J_1(\a), J_2(\a))$  without changing the poset structure $S(\a)$.

\begin{prop}\label{prop: 6.3.3}
Let $\a$ be a multisegment, then there exists a multisegment  $\c$, 
and a multisegment $\a_{w}^{J_{1}(\a), J_2(\a)}$  of parabolic type $(J_1(\a), J_2(\a))$, such that 
\[
 \a_{w}^{J_{1}(\a), J_2(\a)}\in S(\a_{w}^{J_{1}(\a), J_2(\a)})_{\c}, ~\a=(\a_{w}^{J_{1}(\a), J_2(\a)})^{(\c)}
\]

\end{prop}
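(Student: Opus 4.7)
The plan is to produce the pair $(\a_w^{J_1(\a),J_2(\a)},\c)$ by ``thickening'' $\a$ into a parabolic-type multisegment via a uniform shift of the beginnings, and then reading off $\c$ from the extra columns thereby introduced.

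First, Proposition \ref{prop: 6.3.2} gives a unique $w\in S_r^{J_1(\a),J_2(\a)}$ with $\a=\sum_{j}[k_j,\ell_{w(j)}]$; the equality patterns in the sequences $b(\a)=\{k_1\leq\cdots\leq k_r\}$ and $e(\a)=\{\ell_1\leq\cdots\leq\ell_r\}$ encode $J_2(\a)$ and $J_1(\a)$ intrinsically (Definition \ref{def: 6.3.1}). For the target, I choose an integer $N$ large enough so that $k_r-N\leq \ell_1$ and set
\[
\a_w^{J_1(\a),J_2(\a)} := \sum_{j}\bigl[k_j-N,\ \ell_{w(j)}\bigr].
\]
Because the shift is uniform in $j$, the equality patterns of the beginnings are unchanged; the overlap condition $b(\tilde\Delta_r)\leq e(\tilde\Delta_1)$ of Notation \ref{nota: 6.2.21} is now satisfied, and by Lemma \ref{lem: 6.2.3} together with the uniqueness in Proposition \ref{prop: 6.2.4} this is indeed a parabolic-type multisegment indexed by the same $w$.

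Second, I define $\c$ as the iterated reduction data that strips off the $N$ leftmost columns added by the shift: explicitly, the successive application of the single-step $(\cdot)^{(k)}$ of \cite[Definition 5.2]{Deng23} for $k$ running through $k_r-N,\ k_r-N+1,\ \ldots,\ k_r-1$, with appropriate multiplicities whenever several segments share the same beginning (as dictated by $J_2(\a)$). Since at every intermediate stage the newly exposed leftmost columns consist of beginnings uniformly shifted from those of $\a$, the hypothesis $H_k$ of \cite[Definition 5.3]{Deng23} holds at each step, giving $\a_w^{J_1(\a),J_2(\a)}\in S(\a_w^{J_1(\a),J_2(\a)})_\c$. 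The equality $(\a_w^{J_1(\a),J_2(\a)})^{(\c)}=\a$ then follows by a direct induction on $N$, each single reduction $(\cdot)^{(k)}$ undoing one layer of the shift while leaving $w$ intact.

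The main obstacle will be the bookkeeping when $J_2(\a)$ is non-trivial, i.e.\ when several segments share the same beginning $k_j-N+s$: one must peel off these coinciding beginnings in an order for which the hypothesis $H_k$ keeps holding, and for which the single-step reduction identifies segments correctly. The uniformity of the shift ensures that this order is the natural one (identical to the order in which $w$ arranges the ends $\ell_i$), but verifying it carefully and checking that no stray equalities of endpoints arise in the intermediate multisegments is the technical heart of the argument.
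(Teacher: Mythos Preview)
Your construction works on the wrong side of the segments. The operation $(\cdot)^{(k)}$ of \cite[Definition~5.2]{Deng23}, and hence its iterate $(\cdot)^{(\c)}$, truncates segments at their \emph{ends}: a segment ending at $k$ is shortened to end at $k-1$ (this is why, in the proof of relation~(2), once one notes $e(\Delta_{k_1-1})=k-1$ and $e(\Delta_{k_1})=k$ one immediately gets $\b^{(k)}=\a^{(k)}$; and why $\ell_{\a,k}$ counts segments with $e(\Delta)=k$). Your multisegment $\sum_j[k_j-N,\ell_{w(j)}]$ differs from $\a$ only in the \emph{beginnings}, so no sequence of right-truncations $(\cdot)^{(k)}$ can ever recover $\a$ from it; the values $k=k_r-N,\ldots,k_r-1$ you propose do not even lie among the endings of your parabolic multisegment. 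The dual left-operation ${}^{(k)}(\cdot)$ does exist and acts on beginnings (it is used elsewhere in the paper, e.g.\ in Lemma~\ref{lem: 6.2.7}), but the statement of Proposition~\ref{prop: 6.3.3} is formulated with the right-operation $(\cdot)^{(\c)}$, so you must modify endings.

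The paper proceeds by extending endings, and not by a uniform shift. It locates the smallest end $e(\Delta_1)$ and the maximal $\ell$ such that every value in $[e(\Delta_1),\ell-1]$ occurs as an end of some segment of $\a$; it then replaces each segment ending inside $[e(\Delta_1)+1,\ell]$ by $\Delta^{+}$, records the segment $\Delta^1=[e(\Delta_1)+1,\ell]$, and iterates. The point of this choice is that the hypothesis $H_k$ of \cite[Definition~5.3]{Deng23} requires, in particular, that $k-1$ not occur as an end at the moment one applies $(\cdot)^{(k)}$; a naive uniform shift of all endings by $+N$ would in general violate this when $J_1(\a)\neq\emptyset$, whereas the paper's construction guarantees it at each step and moreover preserves $J_1(\a^i)=J_1(\a)$, $J_2(\a^i)=J_2(\a)$ throughout, which is what is needed downstream in Theorem~\ref{prop: 6.3.6}.
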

\begin{proof}
 
 In general $\a$ is not of parabolic type, i.e,
we do not have $\min\{e(\Delta): \Delta\in \a\}\geq \max\{b(\Delta): \Delta\in \a\}$.
Now we show how to construct  $\a_{w}^{J_{1}(\a), J_2(\a)}$ .
 
 In fact, let 
\[
 \a=\{\Delta_{1}, \cdots, \Delta_{n}\},  \Delta_{1}\prec \cdots \prec \Delta_{n}.
\]
Then
\[
 e(\Delta_{1})=\min\{k: i=1, \cdots, n\}.
\]
If $\a$ is not of parabolic type, let
$\Delta^{1}=[e(\Delta_{1})+1, \ell] $ with $\ell$ maximal satisfying 
that for any $m$ such that $ e(\Delta_{1})\leq m\leq  \ell-1$, there is a segment in $\a$ ending in $m$.
We construct $\a^{1}$ by replacing every segment $\Delta$ in 
$\a$ ending in $\Delta^{1}$ by $\Delta^{+}$.
Repeat this construction with $\b_{1}$ to get $\a^{2}\cdots $,
until we get $\a^{s}$, which is of parabolic type.
Let $\c=\{\Delta^{1}, \cdots, \Delta^{s}\}$, then we do as in  \cite[Proposition 6.9]{Deng23} to get 
\[
 \a^s \in S(\a^s)_{\c},~ \a=(\a^s)^{(\c)}.
\] 
Note that by our construction we have 
\[
 J_1(\a^i)=J_1(\a),~ J_2(\a^i)=J_2(\a),
\]
for $i=1, \cdots, s$. 
\end{proof}

\begin{lemma}
Assume that $\a\in S(\a)_{k}$ such that 
\[
 J_1(\a)=J_1(\a^{(k)}), ~J_2(\a)=J_2(\a^{(k)}).
\]
Let $\c\in S(\b)_k$. Then for $\d\in S(\b)$ and $\d>\c$, we have $\d\in S(\b)_k$. 
\end{lemma}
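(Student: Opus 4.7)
The plan is to imitate closely the proof of the analogous lemma in the regular case given at the beginning of Section 6.1. It suffices to show that $\d$ satisfies the hypothesis $H_{k}(\a)$ from \cite[Definition 5.3]{Deng23}, and this breaks into separately controlling the endpoint multiplicities at $k-1$ and at $k$ inside $\d$.

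First I would recall that by \cite[Proposition 4.4]{Deng23} passing to a smaller multisegment in the order $\leq$ can only redistribute endpoints leftwards, so the multiset $e(\d)$ is bounded above by $e(\a)$ in the natural multiset sense. In the regular case the sets $e(\d)$ and $e(\a)$ have no repetitions, and the hypothesis $H_{k}$ reduced to the two combinatorial conditions $k-1 \notin e(\a)$ and $k \in e(\d)$. The non-regular setting requires these to be upgraded to statements about multiplicities, and this is exactly what the additional hypothesis $J_{1}(\a)=J_{1}(\a^{(k)})$ delivers: since $J_{1}$ records precisely which endpoints of $\a$ coincide and with what multiplicity, its equality with $J_{1}(\a^{(k)})$ says that the multiplicity profile of endpoints near $k$ is preserved by the $(k)$-operation. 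The symmetric assumption $J_{2}(\a)=J_{2}(\a^{(k)})$ ensures the analogous preservation on the side of beginnings, which enters when translating the $\leq$-order into multiset constraints.

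Concretely, I would argue in two stages. First, from $\d \leq \a$ combined with the $J_{1}$- (and $J_{2}$-) hypothesis, I would bound the multiplicity of $k-1$ in $e(\d)$ by what is forced by $\a^{(k)}$; since $\a \in S(\a)_{k}$, this bound is exactly the one needed for $H_{k}(\a)$. Second, from $\c \leq \d$ and $\c \in S(\a)_{k}$, I would get $e(\c) \subseteq e(\d)$ as multisets, so that the required number of endpoints at $k$ present in $\c$ is inherited by $\d$, giving the lower bound on the multiplicity of $k$ in $e(\d)$. These two bounds together are precisely $H_{k}(\a)$, whence $\d \in S(\a)_{k}$.

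The main obstacle will be verifying the multiplicity bound at $k-1$: in the regular case this was automatic because $k-1 \notin e(\a)$, but in general $\a$ may contain several segments ending at $k-1$, and the subtle point is to rule out that any additional such segment is created when passing to $\d$. This is where the $J_{1}(\a)=J_{1}(\a^{(k)})$ hypothesis is essential, and making this accounting rigorous (probably by a direct inspection of the elementary moves defining $\leq$, analogous to \cite[Proposition 4.4]{Deng23}) is the only nontrivial step; once it is in place the rest of the argument runs exactly as in the regular case.
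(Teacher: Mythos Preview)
Your outline matches the paper's: reduce to checking $H_{k}(\a)$ for $\d$ by controlling the endpoints at $k-1$ and at $k$, using $e(\d)\subseteq e(\a)$ from \cite[Proposition~4.4]{Deng23} for an upper bound and $e(\c)\subseteq e(\d)$ for the lower bound at $k$. Where you diverge from the paper is in locating the difficulty.

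The step you flag as the ``main obstacle'' is in fact immediate. The hypothesis $J_{1}(\a)=J_{1}(\a^{(k)})$ together with $\a\in S(\a)_{k}$ forces $k-1\notin e(\a)$ outright, exactly as in the regular case: the $(k)$-operation moves an endpoint from $k$ down to $k-1$, and if $\a$ already contained a segment ending at $k-1$, the pattern of equalities among the sorted endpoints $\ell_{1}\leq\cdots\leq\ell_{r}$---which is precisely what $J_{1}$ records---would necessarily change. Hence your worry that ``$\a$ may contain several segments ending at $k-1$'' does not arise under the stated hypothesis, and no multiplicity accounting or inspection of elementary moves is needed here; one simply has $k-1\notin e(\a)$, hence $k-1\notin e(\d)$.

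The genuine upgrade from the regular case is at $k$: rather than merely $k\in e(\d)$, one needs the multiplicity equality $\varphi_{e(\d)}(k)=\varphi_{e(\a)}(k)$. This follows from the sandwich
\[
\varphi_{e(\c)}(k)\ \leq\ \varphi_{e(\d)}(k)\ \leq\ \varphi_{e(\a)}(k)
\]
combined with $\varphi_{e(\c)}(k)=\varphi_{e(\a)}(k)$, which holds because $\c\in S(\a)_{k}$. You already have this step. Once you replace the planned $k-1$ argument by the one-line observation above, your proof is complete and coincides with the paper's.
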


\begin{proof}
It suffices to show that $\d$ satisfies the hypothesis $H_{k}(\a)$.
Note that $e(\d)\subseteq e(\a)$ as is indicated in \cite[Proposition 4.4]{Deng23}.
Assume that $k\in e(\a)$ to avoid triviality.
Now that $k-1\notin e(\a)$ since $\a\in S(\a)_k$ and 
\[
 J_1(\a)=J_1(\a^{(k)}), ~J_2(\a)=J_2(\a^{(k)}),                                                  
\]
 so it is also not in $e(\d)$. Hence to show that $\d\in S(\b)_k$
 hence it is equivalent to show that $\varphi_{e(\d)}(k)=e_{e(\a)}(k)$.
Since $\c\in S(\d)$, we know that $e(\c)\subseteq e(\d)$ hence 
$\varphi_{e(\d)}\leq \varphi_{e(\d)}(k)$.
Now that 
$\c\in S(\a)_k$ implies 
$\varphi_{e(\c)}=\varphi_{e(\a)}$, we conclude that $\varphi_{e(\d)}(k)=e_{e(\a)}(k)$. 
We are done. 
\end{proof}

Now let $\a'\in S(\a)_k$ such that $\psi_k(\a')=(\a^{(k)})_{\min}$, then 
\begin{lemma}\label{lem: 6.3.5}
We have 
\[
 S(\a)_k=\{\c\in S(\a): \c\geq\a'\}.
\] 
\end{lemma}

\begin{proof}
 By the lemma above, we know that 
 $S(\a)_k\supseteq \{\c\in S(\a): \c\geq\a'\}$.
 We conclude that we have equality since $\psi$ preserve the order.
\end{proof}

\begin{teo}\label{prop: 6.3.6}
Assume that $\a$ is a multisegment. Then 
\begin{description}
\item[(1)] 
There exists a multisegment 
$\a_{w}^{J_{1}(\a), J_2(\a)}$ of parabolic type $(J_1(\a), J_2(\a))$ and 
a sequence of integers $k_1, \cdots, k_r$
such that 
\[
 S(\a)\simeq S(\a_{w}^{J_{1}(\a), J_2(\a)})_{k_{r}, \cdots, k_{1}}.
\]

\item[(2)] There exists an element $\a'\in S(\a_{w}^{J_{1}(\a), J_2(\a)})$ such that 
\[
 S(\a_{w}^{J_{1}(\a), J_2(\a)})_{k_{r}, \cdots, k_{1}}=\{\c\in S(\a_{w}^{J_{1}(\a), J_2(\a)}): \c\geq \a'\}.
\]

\end{description}
\end{teo}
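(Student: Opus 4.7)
The plan is to reduce this to the regular case already handled, by iterating the two technical tools established in the preceding subsection: Proposition \ref{prop: 6.3.3} for the reduction to parabolic type, and Lemma \ref{lem: 6.3.5} for the description as an upward-closed subset.

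For part (1), I would invoke Proposition \ref{prop: 6.3.3} directly. It produces a parabolic-type multisegment $\a_w^{J_1(\a), J_2(\a)}$ together with an auxiliary multisegment $\c = \{\Delta^1,\ldots,\Delta^s\}$ such that $\a = (\a_w^{J_1(\a), J_2(\a)})^{(\c)}$ and $\a_w^{J_1(\a), J_2(\a)} \in S(\a_w^{J_1(\a), J_2(\a)})_{\c}$. Unpacking the iterated construction, this yields a sequence $\a_0=\a,\a_1,\ldots,\a_s=\a_w^{J_1(\a), J_2(\a)}$ with $\a_i \in S(\a_i)_{k_i}$ and $\a_{i-1} = \a_i^{(k_i)}$, where the integers $k_i$ are determined by the segments $\Delta^i$ in $\c$. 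The composition of the bijections $\psi_{k_i}\colon S(\a_i)_{k_i} \xrightarrow{\sim} S(\a_{i-1})$ gives the claimed isomorphism $S(\a)\simeq S(\a_w^{J_1(\a), J_2(\a)})_{k_r,\ldots,k_1}$.

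For part (2), I would argue by induction on the length $s$ of the reduction sequence. The crucial property, explicitly noted in the proof of Proposition \ref{prop: 6.3.3}, is that at every stage $J_1(\a_i) = J_1(\a)$ and $J_2(\a_i) = J_2(\a)$; in particular the hypothesis of the lemma preceding \ref{lem: 6.3.5} is satisfied at each step. Applying Lemma \ref{lem: 6.3.5} at stage $i$ gives an element $\a_i' \in S(\a_i)_{k_i}$ with $\psi_{k_i}(\a_i') = (\a_{i-1})_{\min}$ and $S(\a_i)_{k_i} = \{\d \in S(\a_i): \d \geq \a_i'\}$. Since each $\psi_{k_i}$ preserves the partial order, the preimage under $\psi_{k_i}$ of an upward-closed set is again upward-closed, so the iterated preimages $S(\a_w^{J_1(\a), J_2(\a)})_{k_r,\ldots,k_1}$ have a unique minimum element $\a'$ and coincide with $\{\c \in S(\a_w^{J_1(\a), J_2(\a)}): \c \geq \a'\}$.

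The main obstacle is verifying that the interaction between the successive conditions $S(\cdot)_{k_i}$ is compatible, namely that the minimum element $\a_i'$ at each stage lifts through all subsequent maps $\psi_{k_j}$ ($j>i$) without losing the upward-closed characterization. This rests on the twin facts that the $J_r$'s are preserved at every step (so the preceding lemma continues to apply) and that $\psi_k$ is monotone; once these are in place, the inductive step is formal. I expect the rest of the argument to be a clean iteration of the regular case, since all the substantial technical content has already been packaged into Proposition \ref{prop: 6.3.3} and Lemma \ref{lem: 6.3.5}.
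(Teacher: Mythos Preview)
Your proposal is correct and follows essentially the same route as the paper: part (1) is obtained from Proposition~\ref{prop: 6.3.3} together with the bijection $\psi_{k_i}\colon S(\a_i)_{k_i}\xrightarrow{\sim}S(\a_{i-1})$ (from \cite[Proposition~6.5]{Deng23}), and part (2) is obtained by iterating Lemma~\ref{lem: 6.3.5} along the sequence $\a_0,\ldots,\a_r$, using that $J_1(\a_i)=J_1(\a)$ and $J_2(\a_i)=J_2(\a)$ at every step. The paper packages this last observation as a separate lemma, but the content is identical to what you spell out.
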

 
\begin{proof}
Note that (1) follows from  \cite[Proposition 6.5]{Deng23} and 
Proposition \ref{prop: 6.3.3}.
And (2) follows from 
applying the Lemma \ref{lem: 6.3.5} successively to the lemma below. 
\end{proof}
 
\begin{lemma}
There exists a sequence of multisegments  $\a_{0}=\a, \cdots,\a_{r}=\a_{w}^{J_{1}(\a), J_2(\a)}$ such that 
$\a_{w}^{J_{1}(\a), J_2(\a)}$ is of parabolic type $(J_1(\a), J_2(\a))$,   $\a_{i}\in S(\a_{i})_{k_{i}}$ and $\a_{i-1}=\a_{i}^{(k_{i})}$
for some $k_{i}$. 
Moreover, 
\[
 J_{1}(\a_i)=J_1(\a), ~J_2(\a)=J_2(\a)
\]
for
all $i=1, \cdots, r$.  
\end{lemma}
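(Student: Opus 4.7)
The plan is to combine Proposition \ref{prop: 6.3.3} with a refinement of multi-index operations into single-index ones. Proposition \ref{prop: 6.3.3} already produces the multisegment $\a_{w}^{J_{1}(\a), J_{2}(\a)}$ of parabolic type together with a multisegment $\c = \{\Delta^{1}, \ldots, \Delta^{s}\}$ such that $\a = (\a_{w}^{J_{1}(\a), J_{2}(\a)})^{(\c)}$, and the proof there factors the reduction through intermediate multisegments $\a^{0} = \a, \a^{1}, \ldots, \a^{s} = \a_{w}^{J_{1}(\a), J_{2}(\a)}$ for which one directly reads off $J_{1}(\a^{j}) = J_{1}(\a)$ and $J_{2}(\a^{j}) = J_{2}(\a)$. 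It thus suffices to refine each jump $\a^{j+1} \to \a^{j}$ into single-endpoint steps while preserving $J_{1}$ and $J_{2}$.

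I would handle each jump $\a^{j+1} \to \a^{j}$ by applying \cite[Proposition 6.9]{Deng23}: writing $\Delta^{j+1} = [m_{j+1}+1, \ell_{j+1}]$, the multi-index operation at this single segment decomposes as a composition of single-index operations $(-)^{(\ell_{j+1})}, (-)^{(\ell_{j+1}-1)}, \ldots, (-)^{(m_{j+1}+1)}$. Concatenating these refinements across all $j$ produces the desired chain $\a_{0} = \a, \ldots, \a_{r} = \a_{w}^{J_{1}(\a), J_{2}(\a)}$ with indices $k_{i}$ satisfying $\a_{i-1} = \a_{i}^{(k_{i})}$. The maximality of $\ell_{j+1}$ in the construction of $\Delta^{j+1}$ guarantees that the block $[m_{j+1}+1, \ell_{j+1}]$ is a contiguous run of endpoints of $\a^{j}$, which is exactly what is needed to verify the hypothesis $H_{k_{i}}(\a_{i})$ at each step when the block is processed from right to left.

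The remaining point is to verify stability of $J_{1}$ and $J_{2}$ at every intermediate multisegment, not only at the coarser $\a^{j}$. For $J_{2}$ this is automatic: no operation of the form $(-)^{(k)}$ modifies the multiset of beginnings. For $J_{1}$, the equality pattern of sorted endpoints across blocks is fixed by construction, and the only alteration within a single block $[m_{j+1}+1, \ell_{j+1}]$ while it is being processed is a controlled one-step shift at $k_{i}$; by the contiguity of the block the pattern of equalities and strict inequalities between successive sorted endpoints is unaffected. The main obstacle I foresee is the careful bookkeeping required to synchronize the single-index decompositions across different blocks so that $H_{k_{i}}(\a_{i})$ holds throughout, and I expect this to go through by induction on $s$, paralleling the template of \cite[Proposition 6.10]{Deng23} in the regular case.
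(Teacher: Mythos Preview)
Your approach is essentially the same as the paper's: the paper's proof is the single sentence ``This follows from our construction in the proof of Proposition \ref{prop: 6.3.3},'' and you correctly unpack what that means --- take the coarse chain $\a^{0},\ldots,\a^{s}$ built there, then refine each jump into single-index steps via \cite[Proposition 6.9]{Deng23}. Your additional care in checking that $J_{1}$ and $J_{2}$ are preserved at every intermediate step (not just at the coarse $\a^{j}$) is a point the paper leaves entirely implicit, and your contiguity argument for it is the right idea.
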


\begin{proof}
 This follows from our construction in the 
 proof of Proposition \ref{prop: 6.3.3}.
\end{proof}

\bibliographystyle{plain}
\bibliography{biblio}

\end{document}